\documentclass[11pt,letterpaper]{article}

\usepackage[utf8]{inputenc}
\usepackage{comment}
\usepackage{microtype}
\usepackage{graphicx}
\usepackage{booktabs} 
\usepackage{scalerel}
\usepackage[margin=1in]{geometry}
\usepackage[style=alphabetic, maxbibnames=15, maxcitenames=15, natbib=true,  maxalphanames=5, backend=biber, sorting=nty, backref=true]{biblatex}
\addbibresource{SSBO.bib}
\DefineBibliographyStrings{english}{backrefpage = {page},backrefpages = {pages},}

\usepackage{amssymb}
\usepackage{amsthm}
\usepackage{amsmath}
\usepackage{mathtools}

\usepackage{nicefrac}

\usepackage[shortlabels]{enumitem}

\usepackage{url}
\usepackage{float}

\setlength{\parskip}{0.5em}
\setlength{\parindent}{0em}

\usepackage{pifont}
\usepackage{hhline}
\usepackage{multirow}

\usepackage{subfig}
\usepackage{graphicx}
\usepackage{amsmath}
\usepackage{amsthm}
\usepackage{amssymb}
\usepackage{cancel}

\usepackage{multirow,multicol}
\usepackage{pifont}

\usepackage{algorithm}
\usepackage{algpseudocode}



\newcommand\Vector[1]{\mathbf{#1}}

\newcommand\vb{{\Vector{b}}}

\newcommand\ve{{\Vector{e}}}

\newcommand\vs{{\Vector{s}}}

\newcommand\vv{{\Vector{v}}}

\newcommand\vx{{\Vector{x}}}
\newcommand\vy{{\Vector{y}}}
\newcommand\vz{{\Vector{z}}}






\newcommand\sZ{{\mathbb{Z}}}








\newcommand\tf{{\tilde{f}}}
\newcommand\tg{{\tilde{g}}}







\DeclareMathOperator*{\argmax}{arg\,max}
\DeclareMathOperator*{\argmin}{arg\,min}

\newtheorem{assumption}{Assumption}
\newtheorem{definition}{Definition}
\newtheorem{theorem}{Theorem}
\newtheorem{lemma}[theorem]{Lemma}
\newtheorem{proposition}[theorem]{Proposition}

\newtheorem{remark}{Remark}
\numberwithin{assumption}{section}
\numberwithin{definition}{section}
\numberwithin{theorem}{section}
\numberwithin{remark}{section}

\usepackage{hyperref}
\hypersetup{colorlinks,citecolor=blue,linktocpage,breaklinks=true}
\begin{document}

\title{Projection-Free Methods for Stochastic Simple Bilevel
Optimization with Convex Lower-level Problem}

\author{Jincheng Cao\thanks{Department of Electrical and Computer Engineering, The University of Texas at Austin, Austin, TX, USA \qquad\qquad\{jinchengcao@utexas.edu, rjiang@utexas.edu, mokhtari@austin.utexas.edu\}}         \and
        Ruichen Jiang$^*$
   \and
   Nazanin Abolfazli\thanks{Department of Systems and Industrial Engineering, The University of Arizona, Tucson, AZ, USA \qquad\{nazaninabolfazli@email.arizona.edu, erfany@arizona.edu\}}
   \and 
   Erfan Yazdandoost Hamedani$^\dagger$
   \and 
        Aryan Mokhtari$^*$
}

\date{}

\maketitle

\begin{abstract}  
    In this paper, we study a class of stochastic bilevel optimization problems, also known as stochastic simple bilevel optimization, where we minimize a smooth stochastic objective function over the optimal solution set of another stochastic convex optimization problem. We introduce novel stochastic bilevel optimization methods that locally approximate the solution set of the lower-level problem via a stochastic cutting plane, and then run a conditional gradient update with variance reduction techniques to control the error induced by using stochastic gradients. For the case that the upper-level function is convex, our method requires $\tilde{\mathcal{O}}(\max\{1/\epsilon_f^{2},1/\epsilon_g^{2}\}) $ stochastic oracle queries to obtain a solution that is $\epsilon_f$-optimal for the upper-level and $\epsilon_g$-optimal for the lower-level. This guarantee improves the previous best-known complexity of $\mathcal{O}(\max\{1/\epsilon_f^{4},1/\epsilon_g^{4}\})$. Moreover, for the case that the upper-level function is non-convex, our method requires at most $\tilde{\mathcal{O}}(\max\{1/\epsilon_f^{3},1/\epsilon_g^{3}\}) $ stochastic oracle queries to find an $(\epsilon_f, \epsilon_g)$-stationary point. In the finite-sum setting, we show that the number of stochastic oracle calls required by our method are  $\tilde{\mathcal{O}}(\sqrt{n}/\epsilon)$ and $\tilde{\mathcal{O}}(\sqrt{n}/\epsilon^{2})$ for the convex and non-convex settings, respectively, where $\epsilon=\min \{\epsilon_f,\epsilon_g\}$.
\end{abstract}

\newpage
\newpage

\section{Introduction}
An important class of bilevel optimization problems is simple bilevel optimization in which we aim to minimize an upper-level objective function over the solution set of a lower-level problem~\cite{bracken1973mathematical,dempe2010optimality,dutta2020algorithms,shehu2021inertial}. Recently this class of problems has attracted great attention in machine learning society due to their applications in continual learning \cite{borsos2020coresets}, hyper-parameter optimization \cite{franceschi2018bilevel,shaban2019truncated}, meta-learning \cite{rajeswaran2019meta,bertinetto2018meta}, and reinforcement learning \cite{hong2020two,zhang2020bi}. 
Motivated by large-scale learning problems, in this paper, we are particularly interested in the stochastic variant of the simple bilevel optimization   
 where the upper and lower-level objective functions are the expectations of some random functions with unknown distributions and are accessible only through their samples. Hence, the computation of the objective function values or their gradients is not computationally tractable. Specifically, we focus on the \textit{stochastic simple bilevel problem} defined as
\begin{equation}\label{eq:stochastic_simple_bilevel}
\begin{aligned}
\min_{\vx \in \mathbb{R}^{d}} \quad & f(\vx) = \mathbb{E} [\tf(\vx, \theta)] \qquad
\textrm{s.t.} \quad  \vx \in \argmin_{\vz \in \mathcal{Z}} g(\vz) = \mathbb{E} [\tg (\vz, \xi)],\\
\end{aligned}
\end{equation}
where $\mathcal{Z}$ is compact and convex and $\tf, \tg: \mathbb{R}^{d} \to \mathbb{R}$ are continuously differentiable functions on an open set containing $\mathcal{Z}$, and $\theta$ and $\xi$ are some independent random variables drawn from some possibly unknown probability distributions. 
As a result, the functions $f, g: \mathbb{R}^{d} \to \mathbb{R}$ are also continuously differentiable functions on $\mathcal{Z}$. We assume that $g$ is convex but not necessarily strongly convex, and hence the solution set of the lower-level problem in \eqref{eq:stochastic_simple_bilevel} is in general not a singleton. We also study the finite sum version of the above problem where both functions can be written as the average of $n$ component functions, i.e., $f(\vx)=(1/n)\sum_{i=1}^n \tf(\vx, \theta_i)$ and $g(\vz)=(1/n)\sum_{i=1}^n \tg(\vz, \xi_i)$.


The main challenge in solving problem \eqref{eq:stochastic_simple_bilevel}, which is inherited from its deterministic variant, is the absence of access to the feasible set, i.e., the lower-level solution set. This issue eliminates the possibility of using any projection-based or projection-free methods. There have been some efforts to overcome this issue in the deterministic setting (where access to  $f$ and $g$ and their gradients is possible), including \cite{jiangconditional,gong2021bi,sabach2017first,kaushik2021method}, however, there is little done on the stochastic setting described above. In fact, the only work that addresses the stochastic problem in~\eqref{eq:stochastic_simple_bilevel}
is~\cite{jalilzadeh2022stochastic}, where the authors present an iterative regularization-based stochastic extra gradient algorithm and show that it requires $\mathcal{O}(1/\epsilon_f^4)$ and $\mathcal{O}(1/\epsilon_g^4)$ queries to the stochastic gradient of the upper-level and lower-level function, respectively, to obtain a solution that is $\epsilon_f$-optimal for the upper-level and $\epsilon_g$-optimal for the lower-level. We improve these bounds  and also extend our results to nonconvex settings.

\begin{table}[!htbp]\scriptsize
    \centering
    \caption{Results on stochastic simple bilevel optimization. The abbreviations  ``SC'', ``C'', and ``NC'' stand for ``strongly convex'', ``convex'', and ``non-convex'', respectively. Note that $\epsilon = \min\{\epsilon_f,\epsilon_g\}$}\label{tab:bilevel}
    \resizebox{\textwidth}{!}{%
        \begin{tabular}{ccccccccc}
            \toprule
\multirow{2}{*}{References}     & \multirow{2}{*}{Type}    & Upper level                                            & \multicolumn{2}{c}{Lower level}                                       & \multicolumn{2}{c}{Convergence}   &\multirow{2}{*}{Sample Complexity}                                 \\ \cmidrule(r){3-7}
    && Objective $f$                                                          & Objective $g$            & Feasible set $\mathcal{Z}$                          & Upper level &   Lower level                                                                                                                \\ \midrule
           aR-IP-SeG \cite{jalilzadeh2022stochastic}   & Stochastic   & C, Lipschitz    & C, Lipschitz      & Closed    & \multicolumn{2}{c}{${\mathcal{O}}(\max\{1/\epsilon_f^{4},1/\epsilon_g^{4}\})$}  &$\mathcal{O}(1/\epsilon^{4})$  
           \\ \midrule
           \textbf{Algorithm1}          & Stochastic                                & C, smooth                                                   & C, smooth                      & Compact              & \multicolumn{2}{c}{${\tilde{\mathcal{O}}}(\max\{1/\epsilon_f^{2},1/\epsilon_g^{2}\})$}      &$\tilde{\mathcal{O}}(1/\epsilon^{2})$                                                               \\ \midrule
           \textbf{Algorithm1}                & Stochastic                            & NC, smooth                                                   & C, smooth                      & Compact              & \multicolumn{2}{c}{${\tilde{\mathcal{O}}}(\max\{1/\epsilon_f^3, 1/\epsilon_g^{3} \})$}      &$\tilde{\mathcal{O}}(1/\epsilon^{3})$         \\ \midrule
           \textbf{Algorithm2}          & Finite-sum                                & C, smooth                                                   & C, smooth                      & Compact              & \multicolumn{2}{c}{${\tilde{\mathcal{O}}}(\max\{1/\epsilon_f,1/\epsilon_g\})$}      &$\tilde{\mathcal{O}}(\sqrt{n}/\epsilon)$                                                                \\ \midrule
           \textbf{Algorithm2}                & Finite-sum                            & NC, smooth                                                   & C, smooth                      & Compact              & \multicolumn{2}{c}{${\tilde{\mathcal{O}}}(\max\{1/\epsilon_f^{2}, 1/\epsilon_g^{2} \})$}      &$\tilde{\mathcal{O}}(\sqrt{n}/\epsilon^{2})$                  
           \\ \bottomrule
        \end{tabular}%
     }
\end{table}

\textbf{Contributions.} In this paper, we present novel projection-free stochastic bilevel optimization methods with tight non-asymptotic guarantees for both upper and lower-level problems. At each iteration, the algorithms use a small number of samples to build unbiased and low variance estimates and construct a cutting plane to locally approximate the solution set of the lower-level problem and then combine it with a Frank-Wolfe-type update on the upper-level objective. Our methods require careful construction of the cutting plane so that with high probability it contains the solution set of the lower-level problem, which is obtained by selecting proper function and gradient estimators to achieve the obtained optimal convergence guarantees. Next, we summarize our main theoretical results for the proposed Stochastic Bilevel Conditional Gradient methods for Infinite and Finite sample settings denoted by SBCGI and SBCGF, respectively.

\setlist{leftmargin=7mm}

\begin{itemize}
\vspace{-2mm}
    \item (Stochastic setting) We show that SBCGI (Algorithm \ref{alg:STORM}), in the convex setting, finds a solution $\hat{\vx}$ that satisfies
    $f(\hat{\vx}) - f^{*} \leq \epsilon_f$ and $g(\hat{\vx}) - g^{*} \leq \epsilon_g$ with probability $1-\delta$ within $\mathcal{O}(\log(d/\delta\epsilon)/\epsilon^{2})$ stochastic oracle queries, where $\epsilon = \min\{\epsilon_f,\epsilon_g\}$, $f^{*}$ is the optimal value of problem \eqref{eq:stochastic_simple_bilevel} and $g^{*}$ is the optimal value of the lower-level problem. Moreover, in the non-convex setting, it finds $\hat{\vx}$ satisfying 
    $\mathcal{G}(\hat{\vx}) \leq \epsilon_f$ and $g(\hat{\vx}) - g^{*} \leq \epsilon_g$ with probability $1-\delta$ within $\mathcal{O}((\log(d/\delta\epsilon))^{3/2}/\epsilon^{3})$ stochastic oracle queries, where $\mathcal{G}(\hat{\vx})$ is the Frank-Wolfe (FW) gap.
    
    \item (Finite-sum setting) We show that SBCGF (Algorithm \ref{alg:SPIDER}), in the convex setting, finds $\hat{\vx}$ that satisfies
    $f(\hat{\vx}) - f^{*} \leq \epsilon_f$ and $g(\hat{\vx}) - g^{*} \leq \epsilon_g$ with probability $1-\delta$ within $\mathcal{O}(\sqrt{n}(\log(1/\delta\epsilon))^{3/2}/\epsilon)$ stochastic oracle queries, where $n$ is the number of samples of finite-sum problem. Moreover, in the nonconvex setting, it finds $\hat{\vx}$ that satisfies
    $\mathcal{G}(\hat{\vx}) \leq \epsilon_f$ and $g(\hat{\vx}) - g^{*} \leq \epsilon_g$ with probability $1-\delta$ within $\mathcal{O}(\sqrt{n}\log(1/\delta\epsilon)/\epsilon^{2})$ stochastic oracle queries.
\end{itemize}

\subsection{Related work} 

\noindent\textbf{General stochastic bilevel.} 
 In a general format of stochastic bilevel problems, the upper-level function $f$ also depends on an extra variable $y \in \mathbb{R}^{p}$ which also affects the lower-level objective, 
\begin{equation}\label{eq:general_SBO}
\min_{\vx \in \mathbb{R}^{d}, \vy \in \mathbb{R}^{p}} \ f(\vx,\vy) = \mathbb{E} [\tf(\vx, \vy, \theta)] \qquad\quad
\textrm{s.t.} \  \vx \in \argmin_{\vz \in \mathcal{Z}} g(\vz,\vy) = \mathbb{E} [\tg (\vz, \vy, \xi)].
\end{equation}
 There have been several works including \cite{hong2020two,ghadimi2018approximation,yang2021provably,khanduri2021near,chen2021tighter,akhtar2021} on solving the general stochastic bilevel problem~\eqref{eq:general_SBO}. However, they only focus on the setting where the lower-level problem is strongly convex, i.e., $g(\vz,\vy) $ is strongly convex with respect to $\vz$ for any value of $\vy$. 
In fact, \eqref{eq:general_SBO} with a convex lower-level problem is known to be NP-hard \cite{colson2007overview}. Hence, the results of these works are not directly comparable with our work as we focus on a simpler setting, but our assumption on the lower-level objective function is weaker and it only requires the function to be convex.


\noindent\textbf{Deterministic simple bilevel.} There have been some recent results on 
non-asymptotic guarantees for the deterministic variant of problem~\eqref{eq:stochastic_simple_bilevel}. 
The BiG-SAM algorithm was presented in~\cite{sabach2017first}, and it was shown that its lower-level objective error converges to zero at a rate of $\mathcal{O}(1/t)$, while the upper-level error asymptotically converges to zero. In~\cite{kaushik2021method}, the authors achieved  the first non-asymptotic rate for both upper- and lower-level problems by introducing an iterative regularization-based method which achieves an $(\epsilon_{f}, \epsilon_{g})$-optimal solution after $\mathcal{O}(\max \{1/\epsilon_{f}^{4},1/\epsilon_{g}^{4}\})$ iterations. In~\cite{jiangconditional}, the authors proposed a projection-free method for deterministic simple bilevel problems that has a complexity of $\mathcal{O}(\max\{1/\epsilon_{f},1/\epsilon_{g}\})$ for convex upper-level and complexity of ${\mathcal{O}}(\max\{1/\epsilon_f^2, 1/(\epsilon_f \epsilon_g) \})$ for non-convex upper-level. Moreover, in~\cite{chen2023bilevel} the authors presented a switching gradient method to solve simple bilevel problems with convex smooth functions for both upper- and lower-level problems with complexity $\mathcal{O}(1/\epsilon)$. However, all the above results are limited to the deterministic setting.

\noindent\textbf{General bilevel without lower-level strong convexity.} Recently, there are several recent works on general bilevel optimization problems without lower-level strong convexity including \cite{liu2021towards, sow2022primal, shen2023penalty, huang2023momentum, chen2023bilevel}. However, they either have a weaker theoretical results like asymptotic convergence rate in \cite{liu2021towards} or have some additional assumptions. Specifically, in \cite{sow2022primal}, the authors reformulated the problem as a constrained optimization problem and further assumes such problem to be a convex program. Moreover, in \cite{shen2023penalty, huang2023momentum}, the authors in both papers assumed that the lower-level objective satisfies the PL inequality, while we assumed that the lower-level objective is convex. In \cite{chen2023bilevel}, the authors used a looser convergence criterion that only guarantees convergence to a Goldstein stationary point. Since these works consider a more general class of problems, we argue that
their theoretical results when applied to our setting are necessarily weaker.

\vspace{-2mm}
\section{Preliminaries}
\vspace{-1mm}
\subsection{Motivating examples}
\vspace{-1mm}


\noindent \textit{Example 1: Over-parameterized regression.} A general form of problem \eqref{eq:stochastic_simple_bilevel} is when the lower-level problem represents training loss and the upper-level represents test loss. The goal is to minimize the test loss by selecting one of the optimal solutions for the training loss \cite{gong2021bi}. An instance of that is the constrained regression problem, where we intend to find an optimal parameter vector $\boldsymbol{\beta} \in \mathbb{R}^d$ that minimizes the loss $\ell_{\operatorname{tr}}(\boldsymbol{\beta})$ over the training dataset $\mathcal{D}_{\mathrm{tr}}$. To represent some prior knowledge, we usually constrain $\boldsymbol{\beta}$ to be in some subsets $\mathcal{Z} \subseteq \mathbb{R}^d$, e.g.,  $\mathcal{Z}=\left\{\boldsymbol{\beta} \mid\|\boldsymbol{\beta}\|_1 \leq \lambda\right\}$ for some $\lambda>0$ to induce sparsity.
To handle multiple global minima, we adopt the over-parameterized approach, where the number of samples is less than the parameters. 
Although achieving one of these global minima is possible, not all optimal solutions perform equally on other datasets.
Hence, we introduce an upper-level objective: the loss on a validation set $\mathcal{D}_{\text {val }}$. This helps select a training loss optimizer that performs well on both training and validation sets.
It leads to the following bilevel problem:
\begin{equation}\label{eq:regression}
\begin{aligned}
\min_{\boldsymbol{\beta} \in \mathbb{R}^d} ~ & f(\boldsymbol{\beta}) \triangleq \ell_{\mathrm{val}}(\boldsymbol{\beta}) \quad \qquad
\textrm{ s.t. } \ \  \boldsymbol{\beta} \in \underset{\mathbf{z} \in \mathcal{Z}}{\operatorname{argmin}} ~ g(\mathbf{z}) \triangleq \ell_{\mathrm{tr}}(\mathbf{z}) 
\end{aligned}
\end{equation}
In this case, both the upper- and lower-level losses are smooth and convex if $\ell$ is smooth and convex. 

\noindent \textit{Example 2: Dictionary learning.} 
Problem \eqref{eq:stochastic_simple_bilevel} 
also appears in lifelong learning, where the learner takes a series of tasks sequentially and tries to accumulate knowledge from past tasks to improve performance in new tasks. Here we focus on continual dictionary learning.
The aim of dictionary learning is to obtain a compact representation of the input data. Let $\mathbf{A}=\left\{\mathbf{a}_1, \ldots, \mathbf{a}_n\right\}\in \mathbb{R}^{m \times n}$ denote a dataset of $n$ points. 
We aim to get a dictionary $\mathbf{D}=\left[\mathbf{d}_1, \ldots, \mathbf{d}_p\right] \in \mathbb{R}^{m \times p}$ such that all data point $\mathbf{a}_i$ can be represented by a linear combination of basis vectors in $\mathbf{D}$ which can be cast as \cite{kreutz2003dictionary,yaghoobi2009dictionary,rozemberczki2021pytorch,bao2015dictionary}:
\begin{equation}\label{eq:O_Dictionary_learning} \min _{\mathbf{D} \in \mathbb{R}^{m \times p}} \min _{\mathbf{X} \in \mathbb{R}^{p \times n}} \frac{1}{2 n} \sum_{i \in \mathcal{N}}\left\|\mathbf{a}_i-\mathbf{D x}_i\right\|_2^2 
\qquad \text { s.t. }\left\|\mathbf{d}_j\right\|_2 \leq 1, j=1, \ldots, p ;\left\|\mathbf{x}_i\right\|_1 \leq \delta, i \in \mathcal{N} .
\end{equation}
Moreover, we denote $\mathbf{X}=\left[\mathbf{x}_1, \ldots, \mathbf{x}_n\right] \in \mathbb{R}^{p \times n}$ as the coefficient matrix.
In practice, data points usually arrive sequentially and the representation evolves gradually. Hence, the dictionary must be updated sequentially as well. Assume that we already have learned a dictionary $\hat{\mathbf{D}} \in \mathbb{R}^{m \times p}$ and the corresponding coefficient matrix $\hat{\mathbf{X}} \in \mathbb{R}^{p \times n}$ for the dataset $\mathbf{A}$. As a new dataset $\mathbf{A}^{\prime}=\left\{\mathbf{a}_1^{\prime}, \ldots, \mathbf{a}_{n^{\prime}}^{\prime}\right\}$ arrives, we intend to enrich our dictionary by learning $\tilde{\mathbf{D}} \in \mathbb{R}^{m \times q}(q>p)$ and the coefficient matrix $\tilde{\mathbf{X}} \in \mathbb{R}^{q \times n^{\prime}}$ for the new dataset while maintaining good performance of $\tilde{\mathbf{D}}$ on the old dataset $\mathbf{A}$ as well as the learned coefficient matrix $\hat{\mathbf{X}}$. 
This leads to the following stochastic bilevel problem:
\begin{equation}\label{eq:Dictionary_learning}
\min _{\tilde{\mathbf{D}} \in \mathbb{R}^{m \times q}} \min _{\tilde{\mathbf{X}} \in \mathbb{R}^{q \times n^{\prime}}} f(\tilde{\mathbf{D}}, \tilde{\mathbf{X}}) 
\qquad \text { s.t. } \ \ \left\|\tilde{\mathbf{x}}_k\right\|_1 \leq \delta, k=1, \ldots, n^{\prime} ; \ \ \tilde{\mathbf{D}} \in \underset{\left\|\tilde{\mathbf{d}}_j\right\|_2 \leq 1}{\operatorname{argmin}} g(\tilde{\mathbf{D}}),
\end{equation}
where $f(\tilde{\mathbf{D}}, \tilde{\mathbf{X}}) \triangleq \frac{1}{2 n^{\prime}} \sum_{k=1}^{n^{\prime}}\|\mathbf{a}_k^{\prime}-\tilde{\mathbf{D}} \tilde{\mathbf{x}}_k\|_2^2$ represents the average reconstruction error on the new dataset $\mathbf{A}^{\prime}$, and $g(\tilde{\mathbf{D}}) \triangleq \frac{1}{2 n} \sum_{i=1}^n\|\mathbf{a}_i-\tilde{\mathbf{D}} \hat{\mathbf{x}}_i\|_2^2$ represents the error on the old dataset $\mathbf{A}$. Note that we denote $\hat{\mathbf{x}}_i$ as the prolonged vector in $\mathbb{R}^q$ by appending zeros at the end. In problem \eqref{eq:Dictionary_learning}, the upper-level objective is non-convex, while the lower-level loss is convex with multiple minima.

\subsection{Assumptions and definitions}
Next, we formally state the assumptions required in this work. 
\begin{assumption}\label{assump1}
    $\mathcal{Z}$ is convex and compact with diameter $D$, i.e., $\forall \vx,\vy \in \mathcal{Z},$ we have $\| \vx - \vy \| \leq D.$
\end{assumption}

\begin{assumption}\label{assump2} 
The upper-level stochastic function $\tf$ satisfies the following conditions:
\begin{enumerate}[label=(\roman*)]
    \item  $\nabla \tf$  is Lipschitz with constant $L_f$, i.e., $\forall \vx,\vy \in \mathcal{Z}, \forall \theta$,
    $ \| \nabla \tf(\vx,\theta) - \nabla \tf(\vy,\theta) \| \leq L_f \| \vx - \vy \|$. 
    \item  The stochastic gradients noise is sub-Gaussian, 
    $\mathbb{E} [\exp\{\| \nabla \tf(\vx, \theta) - \nabla f(\vx) \| ^{2}/\sigma_{f}^{2}\}] \leq \exp\{1\}$.
\end{enumerate}
\end{assumption}

\begin{assumption} \label{assump3}
The lower-level stochastic function $\tg$ satisfies the following conditions:\label{assum:lower_level}
\begin{enumerate}[label=(\roman*)]
    \item $g$ is convex and $\nabla \tg$ is $L_g$-Lipschitz, i.e., $\forall \vx,\vy \in \mathcal{Z}, \forall \xi$, $\| \nabla \tg(\vx,\xi) - \nabla \tg(\vy,\xi) \| \leq L_g \| \vx - \vy \|$.
    \item  The stochastic gradients noise is sub-Gaussian,  
    $\mathbb{E} [\exp\{\| \nabla \tg(\vx, \xi) - \nabla g(\vx) \| ^{2}/\sigma_{g}^{2}\}] \leq \exp\{1\}$.
    
    
    \item The stochastic functions noise is sub-Gaussian, $\mathbb{E} [\exp\{| \tg(\vx, \xi) - g(\vx) | ^{2}/\sigma_{l}^{2}\}] \leq \exp\{1\}$.
\end{enumerate}
\end{assumption} 

\begin{remark}
    Assumptions \ref{assump1} and \ref{assump3}(i) imply that $\tg$ is Lipschitz continuous on an open set containing $\mathcal{Z}$ with some constant $L_l$, i.e., for all $\vx,\vy \in \mathcal{Z},$ we have $| \tg(\vx) - \tg(\vy) | \leq L_l \| \vx - \vy \|$. 
\end{remark}

In the paper, we denote $g^* \triangleq \min _{\mathbf{z} \in \mathcal{Z}} g(\mathbf{z})$ and $\mathcal{X}_g^* \triangleq \argmin_{\mathbf{z} \in \mathcal{Z}} g(\mathbf{z})$ as the optimal value and the optimal solution set of the lower-level problem, respectively. Note that by Assumption~\ref{assum:lower_level}, the set $\mathcal{X}_g^*$ is nonempty, convex, and compact, but typically not a singleton as $g$ potentially has multiple minima on $\mathcal{Z}$. Furthermore, we denote $f^*$ and $\mathbf{x}^*$ as the optimal value and an optimal solution of problem \eqref{eq:stochastic_simple_bilevel}, which are assured to exist since $f$ is continuous and $\mathcal{X}_g^*$ is compact.

\begin{definition}
 When $f$ is convex, a point $\hat{\vx} \in \mathcal{Z}$ is $(\epsilon_{f}, \epsilon_{g})$-optimal if 
$f(\hat{\vx}) - f^{*} \leq \epsilon_{f}$  and $ g(\hat{\vx}) - g^{*} \leq \epsilon_{g}$. When $f$ is non-convex, $\hat{\vx} \in \mathcal{Z}$ is $(\epsilon_{f}, \epsilon_{g})$-optimal if 
$\mathcal{G}(\hat{\vx}) \leq \epsilon_{f} $ and $g(\hat{\vx}) - g^{*} \leq \epsilon_{g}$, where $\mathcal{G}(\hat{\vx})$ is the FW gap \cite{jaggi2013,lacoste2016} defined as
  $  \mathcal{G}(\hat{\mathbf{x}}) \triangleq \max _{\mathbf{s} \in \mathcal{X}_g^*}\{\langle\nabla f(\hat{\mathbf{x}}), \hat{\mathbf{x}}-\mathbf{s}\rangle\}$.
\end{definition}

\vspace{-2mm}
\section{Algorithms}
\vspace{-1mm}

\textbf{Conditional gradient for simple bilevel optimization.}  A variant of the conditional gradient (CG) method for solving bilevel problems has been introduced in \cite{jiangconditional} which uses a cutting plane idea \cite{boyd2007localization} to approximate the solution set of the lower-level problem denoted by $\mathcal{X}_{g}^{*}$. More precisely, if one has access to $\mathcal{X}_{g}^{*}$, it is possible to run the FW update with stepsize $\gamma_t$ as
\begin{equation}\label{eq:FW_det}
\vx_{t+1}=(1-\gamma_t) \vx_{t}+ \gamma_t \vs_{t}, \qquad \text{where}\quad \vs_{t}= \argmin_{\vs \in \mathcal{X}_{g}^{*}}\langle \nabla f(\vx_{t}), s \rangle
\end{equation}
However, the set $\mathcal{X}_{g}^{*}$ is not explicitly given and the above method is not implementable. In \cite{jiangconditional}, the authors suggested the use of the following set: 
 $\mathcal{X}_{t} = \{\vs \in \mathcal{Z} : \langle \nabla g(\vx_t), \vs - \vx_t \rangle \leq g(\vx_0) - g(\vx_t) \}$ 
instead of the set $\mathcal{X}_{g}^{*}$ in the FW update given in \eqref{eq:FW_det}. Note that  $\vx_0$ is selected in a way that $g(\vx_0)-g^*$ is smaller than $\epsilon_g/2$, and such a point can be efficiently computed. A crucial property of the above set is that 
it always contains the solution set of the lower-level problem denoted by $\mathcal{X}_{g}^{*}$. This can be easily verified by the fact that for any $\vx^*_g$ in $\mathcal{X}_{g}^{*}$ we have $\vx^*_g\in \mathcal{Z}$ and
\begin{equation}\label{eq:argument}
    \langle \nabla g(\vx_t), \vx_{g}^{*} - \vx_t \rangle\leq g(\vx^*_g) - g(\vx_t) \leq g(\vx_0) - g(\vx_t),
\end{equation}
where the second inequality holds as $g(\vx_0)\geq g(\vx^*_g)$. 
As shown in~\cite{jiangconditional}, this condition is sufficient to show that if one follows the update in \eqref{eq:FW_det} with $\mathcal{X}_{t}$ instead of $\mathcal{X}_{g}^{*}$, the iterates will converge to the optimal solution. 
However, this framework is not applicable to the stochastic setting as we cannot access the functions or their gradients. Next, we present our main idea to address this delicate issue. 

\textbf{Random set for the subproblem.} A natural idea to address stochasticity is to replace all gradients and functions with their stochastic estimators for both the subproblem in \eqref{eq:FW_det}, i.e., $\nabla f(\vx_{t})$,  as well as the construction of the cutting plane $\mathcal{X}_{t}$, i.e., $g(\vx_t)$, and $\nabla g(\vx_t)$. However, this simple idea fails since the set $\mathcal{X}_t$ may no longer contain the solution set $\mathcal{X}_{g}^{*}$. 
More precisely, if $\hat{g}_{t}$ and $\widehat{\nabla g}_{t}$ are unbiased estimators of $g(\vx_t)$ and $\nabla g(\vx_t)$, respectively, for the following approximation set
\begin{equation}\label{eq:wrongset}
   \mathcal{X}'_t = \{\vs \in \mathcal{Z} : \langle \widehat{\nabla g}_{t}, \vs - \vx_t \rangle \leq g(\vx_0) - \hat{g}_{t} \}
\end{equation}
we can not argue that it contains $\mathcal{X}_{g}^{*}$, as the second inequality in \eqref{eq:argument} does not hold, i.e., 
$\langle \widehat{\nabla g}(\vx_t), \vx_{g}^{*} - \vx_t \rangle\leq \hat{g}(\vx^*_g) - \hat{g}(\vx_t) \nleq  g(\vx_0) - \hat{g}(\vx_t)$.
In the appendix, we numerically illustrate this point.

To address this issue, we tune the cutting plane by only moving it but not rotating it, i.e., adding another term to tolerate the noise from stochastic estimates. We introduce the stochastic cutting plane
\begin{equation}\label{eq:sto_cutting_plane}
   \hat{\mathcal{X}}_t = \{\vs \in \mathcal{Z} : \langle \widehat{\nabla g}_{t}, \vs - \vx_t \rangle \leq g(\vx_0) - \hat{g}_{t} + K_{t} \} ,
\end{equation}
where $\widehat{\nabla g}_{t}$ and $\hat{g}_{t}$ are gradient and function value estimators, respectively, that we formally define later. In the above expression, the addition of the term $K_{t}$, which is a sequence of constants converging to zero as $t \to \infty$, allows us to ensure that with high probability the random set  $\hat{\mathcal{X}}_t$ contains all optimal solutions of the lower-level problem. 
Choosing suitable values for the sequence $K_{t}$ is a crucial task. If we select a large value for $K_{t}$ then the probability of $\hat{\mathcal{X}}_t$ containing $\mathcal{X}_{g}^{*}$ goes up at the price allowing points with larger values $g$ in the set. As a result, once we perform an update similar to the one in \eqref{eq:FW_det}, the lower-level function value could increase significantly. On the other hand, selecting small values for $K_{t}$ would allow us to show that the lower level objective function is not growing, while the probability of $\hat{\mathcal{X}}_{t}$ containing $\mathcal{X}_{g}^{*}$ becomes smaller which could even lead to a case that the set becomes empty and the bilevel problem becomes infeasible.

\begin{remark}
How to compute $g(\vx_{0})$?
In the finite sum setting, we can accurately compute $g(\vx_{0})$, and the additional cost of $n$ function evaluations will be dominated by the overall complexity.
In the stochastic setting, we could use a large batch of samples to compute $g(\vx_{0})$ with high precision at the beginning of the process. This additional operation will not affect the overall sample complexity of the proposed method, as the additional cost is negligible compared to the overall sample complexity.
Specifically, we need to take a batch size of $b = \tilde{\mathcal{O}}(\epsilon^{-2})$ to estimate $\hat{g}(\vx_{0})$. Using the Hoeffding inequality for subgaussian random variables, we have the following bound:
$   
|\hat{g}(\vx_{0}) - g(\vx_{0})| \leq \sqrt{2}\sigma_{l}(T+1)^{-\omega/2}\sqrt{\log(2/\delta)}
$, with a probability of at least $1-\delta$, where $T$ is the maximum number of iterations. Comparing this with Lemma 4.1.3, we can further derive:
$   
|\hat{g}(\vx_{0}) - g(\vx_{0})| \leq \sqrt{2}\sigma_{l}(T+1)^{-\omega/2}\sqrt{\log(2/\delta)}
\leq \sqrt{2}(2L_{l}D+\frac{3^{\omega}}{3^{\omega}-1}\sigma_{l})(t+1)^{-\omega/2}\sqrt{\log(6/\delta}) 
$, with a probability of at least $1-\delta$ for all $0\leq t \leq T$. Consequently, the introduced error term would be absorbed in $K_{0, t}$ and will not affect any parts of the analysis.
\end{remark}


\textbf{Variance reduced estimators.} As mentioned above, a key point in the design of our stochastic bilevel algorithms is to select $K_{t}$ properly such that $\hat{\mathcal{X}}_{t}$ contains $\mathcal{X}_{g}^{*}$ with high probability, for all $t \geq 0$. To achieve such a guarantee, we first need to characterize the error of our gradient and function value estimators. 
More precisely, suppose that for our function estimator we have that $P(| \hat{g}_{t} - g(\vx_t)| \leq K_{0, t}) \geq 1-\delta^{'}$ and for the gradient estimator we have $P(\| \widehat{\nabla g}_{t} - \nabla g(\vx_t) \| \leq K_{1,t}) \geq 1-\delta^{'}$, for some $\delta^{'} \in (0,1)$. Then, by setting $ K_{t}= K_{0,t}+DK_{1,t}$, we can guarantee that the conditions required for the inequalities in \eqref{eq:argument} hold with probability at least $(1-2\delta^{'})$.


Using simple sample average estimators would not allow for the selection of a diminishing $K_t$, as the variance is not vanishing, but by using proper variance-reduced estimators the variance of the estimators vanishes over time and eventually, we can send $K_t$ to zero. 
In this section, we focus on two different variance reduction estimators. For the stochastic setting in \eqref{eq:stochastic_simple_bilevel} we use the STOchastic Recursive Momentum estimator (STORM), proposed in \cite{cutkosky2019momentum}, and for the finite-sum setting, we utilize the Stochastic Path-Integrated Differential EstimatoR (SPIDER) proposed in \cite{fang2018spider}. 
If $\vv_{t-1}$ is the gradient estimator of STORM at time $t-1$, the next estimator is computed as 
\begin{equation}\label{eq:storm}
    \vv_{t} = (1-\alpha_{t})\vv_{t-1} + \nabla \tf(\vx_{t}, \theta_{t}) - (1-\alpha_{t})\nabla \tf(\vx_{t-1}, \theta_{t}),
\end{equation}
where $\nabla \tf(\vx, \theta)$ is the stochastic gradient evaluated at $\vx$ with sample $\theta$. The main advantage of the above estimator is that it can be implemented even with one sample per iteration. 
Unlike STORM, for the SPIDER estimator, we need a larger batch of samples per update. More precisely, 
if we consider $\vv_{t-1}$ as the estimator of SPIDER for $\nabla f(\vx_t)$, it is updated according to 
\begin{equation}
    \vv_{t} =  \nabla f_{\mathcal{S}}(\vx_t) - \nabla f_{\mathcal{S}}(\vx_{t-1}) + \vv_{t-1},
\end{equation}
where $\nabla f_{\mathcal{S}}(\vx) = (1/S)\sum_{i\in \mathcal{S}}\nabla f(\vx,\theta_i)$ is the average sub-sampled stochastic gradient computed using samples that are in the set $\mathcal{S}$. As we will discuss later, in the finite sum case that we use SPIDER, the size of batch $S$ depends on $n$ which is the number of component functions. 
We delay establishing a high probability error bound for these estimators to section \ref{sec:whp_bound}.

\begin{algorithm}[t!]
\caption{SBCGI}\label{alg:STORM}
\begin{algorithmic}[1]
\State $\mathbf{Input}$: Target accuracy: $\epsilon_f, \epsilon_g > 0$, probability  $\delta > 0$, step size: $\alpha_t, \beta_t, \rho_t, \gamma_t > 0$
\State $\mathbf{Initialization}$: Initialize $\vx_0 \in \mathcal{Z}$ such that $g(\vx_0) - g^* \leq \epsilon_g/2$ 
\For{$t = 0, \dots, T$}
    \If {$t = 0$}
        \State $\widehat{\nabla f}_t = \nabla \tf(\vx_{t},\theta_{t}), \widehat{\nabla g}_t = \nabla \tg(\vx_{t},\xi_{t}), \hat{g}_t = \tg(\vx_{t},\xi_{t})$
    \Else 
        \State Update the estimate of $\nabla f$, $\widehat{\nabla f}_{t} = (1-\alpha_t)\widehat{\nabla f}_{t-1} + \nabla \tf(\vx_t, \theta_t) -(1- \alpha_t) \nabla \tf(\vx_{t-1}, \theta_t)$
        \State Update the estimate of $\nabla g$, $\widehat{\nabla g}_{t} = (1-\beta_{t})\widehat{\nabla g}_{t-1} + \nabla \tg(\vx_{t}, \xi_{t}) - (1-\beta_{t})\nabla \tg(\vx_{t-1}, \xi_{t})$
        \State Update the estimate of $g$, $\hat{g}_{t} = (1-\rho_t)\hat{g}_{t-1} + \tg(\vx_t, \xi_t)-(1-\rho_t) \tg(\vx_{t-1}, \xi_t)$
    \EndIf
    \State Compute ${\vs_t \in \argmin_{\vs \in \mathcal{X}_t}\{\widehat{\nabla f}_{t}^\top\vs\}}$ where $\mathcal{X}_t = \{\vs \in \sZ : \langle \widehat{\nabla g}_{t}, \vs - \vx_t \rangle \leq g(\vx_0) - \hat{g}_{t} + K_t \}$    
    \State Update the variable $\vx_{t+1} = (1-\gamma_{t+1})\vx_t + \gamma_{t+1}\vs_t$
\EndFor
\end{algorithmic}
\vspace{-1mm}
\end{algorithm}

\begin{algorithm}[t!]
\caption{SBCGF}\label{alg:SPIDER}
\begin{algorithmic}[1]
\State $\mathbf{Input}$: Target accuracy: $\epsilon_f, \epsilon_g > 0$, probability accuracy: $\delta_0, \delta_1 > 0$, step size: $\gamma_t > 0$
\State $\mathbf{Initialization}$: Initialize $\vx_{0} \in \mathcal{Z}$ such that $ g(\vx_0) - g^* \leq \epsilon_g/2$
\For{$t = 0, \dots, T$}
    \If{mod$(t, q) = 0$} 
        \State Set $\widehat{\nabla f}_{t} = \nabla f (\vx_{t})$, $\widehat{\nabla g}_{t} = \nabla g (\vx_{t})$, $\hat{g}_{t} = g (\vx_{t})$
    \Else
        \State Draw $S$ samples 
        \State Update the estimate of $\nabla f$ as $\widehat{\nabla f}_{t} = \widehat{\nabla f}_{t-1} + \nabla f_{\mathcal{S}}(\vx_t) - \nabla f_{\mathcal{S}}(\vx_{t-1})$
        \State Update the estimate of $\nabla g$ as $\widehat{\nabla g}_{t} = \widehat{\nabla g}_{t-1} + \nabla g_{\mathcal{S}}(\vx_{t}) - \nabla g_{\mathcal{S}}(\vx_{t-1})$
        \State Update the estimate of $g$ as $\hat{g}_{t} = \hat{g}_{t-1} + g_{\mathcal{S}}(\vx_t)- g_{\mathcal{S}}(\vx_{t-1})$
    \EndIf
    \State Compute ${\vs_t \in \argmin_{\vs \in \mathcal{X}_t}\{\widehat{\nabla f}_{t}^\top\vs\}}$ where $\mathcal{X}_t = \{\vs \in \sZ : \langle \widehat{\nabla g}_{t}, \vs - \vx_t \rangle \leq g(\vx_0) - \hat{g}_{t} + K_t\}$
    \State Update the variable $\vx_{t+1} = (1-\gamma_{t+1})\vx_t + \gamma_{t+1}\vs_t$
\EndFor
\end{algorithmic}
\vspace{-1mm}
\end{algorithm}

\subsection{Conditional gradient algorithms with random sets: stochastic and finite-sum }

Next, we present our Stochastic Bilevel Conditional Gradient method for Infinite sample case abbreviated by SBCGI for solving \eqref{eq:stochastic_simple_bilevel} and its finite sum variant denoted by SBCGF. In both cases, we first find a point $\vx_0$ that satisfies $g(\vx_0) - g^* \leq \epsilon_g/2$, for some accuracy $\epsilon_{g}$. The cost of finding such a point is negligible compared to the cost of the main algorithm as we discuss later. At each iteration $t$, we first update the gradient estimator  of the upper-level and the function and gradient estimators of  the lower-level problem. In SBCGI, we follow the STORM idea as described in steps 4-6 of Algorithm \ref{alg:STORM}, while in SBCGF, we use the SPIDER technique as presented in steps 7-10 of Algorithm \ref{alg:SPIDER}. In the case of SBCGF, we need to compute the exact gradient and function values once every $q$ iteration as presented in steps 4-6 of Algorithm  \ref{alg:SPIDER}. 
Once the estimators are updated, we can define the random set $\hat{\mathcal{X}}_{t}$ as in \eqref{eq:sto_cutting_plane} and solve the following subproblem over the set $\hat{\mathcal{X}}_{t}$,
\vspace{-1mm}
\begin{equation}\label{eq:subproblem}
    \vs_{t} = \argmin_{\vs \in \hat{\mathcal{X}}_t} \ \langle \widehat{\nabla f}_{t}, \vs \rangle,
\end{equation}
where $\widehat{\nabla f}_{t}$ is the unbiased estimator of $\nabla f(\vx_{t})$. Note that we implicitly assume that we have access to a linear optimization oracle that returns a solution of the subproblem in \eqref{eq:subproblem}, which is standard for projection-free methods \cite{jaggi2013,lacoste2016}. In particular, if $\mathcal{Z}$ can be described by a system of linear inequalities, then problem \eqref{eq:subproblem} corresponds to a linear program and can be solved efficiently by a standard solver as we show in our experiments.
Once, $ \vs_{t}$ is calculated we simply update the iterate
\begin{equation}
    \vx_{t+1} = (1-\gamma_{t+1})\vx_{t} + \gamma_{t+1}\vs_{t}
\end{equation}
with stepsize $\gamma_{t+1} \in [0,1]$. The only missing part for the implementation of our methods is the choice of $K_t$ in the random set and the stepsize parameters. We address these points in the next section.

\begin{remark}
    Note that SBCGI can be implemented with a batch size as small as $S=1$. However, this does not imply that the batch size "has to be" $S=1$. In other words, the main advantage of SBCGI, compared to SBCGF, is its capability to be implemented with any mini-batch size, even as small as $S=1$. Therefore, for SBCGI, the batch size can be set arbitrarily, whereas for SBCGF, it must be $\sqrt{n}$.
\end{remark}

\begin{remark}
    In the finite-sum setting,  if the numbers of  functions in the upper- and lower-level losses are different,  we could simply modify SBCGF \ref{alg:SPIDER} by choosing $S_{u} = q_{u} = \sqrt{n_{u}}$ and $S_{l} = q_{l} = \sqrt{n_{l}}$, where $n_{u}$ and $n_{l}$ are the number of functions in the upper- and lower-level, respectively.
\end{remark}

\vspace{-2mm}
\section{Convergence analysis}
\vspace{-1mm}
In this section, we characterize the sample complexity of our  methods for stochastic and finite-sum settings. Before stating our results, we first characterize a high probability bound for the estimators of our algorithms, which are crucial in the selection of parameter $K_t$ and the overall sample complexity.

\subsection{High probability bound for the error terms}\label{sec:whp_bound}
To achieve a high probability bound, it is common to assume that the noise of gradient or function is uniformly bounded as in \cite{fang2018spider,xie2020}, but such assumptions may not be realistic for most machine learning applications.
Hence, in our analysis, we consider a milder assumption and assume the noise of function and gradient are sub-Gaussian as in Assumptions \ref{assump2} and \ref{assump3}, respectively. 
Given these assumptions, we next establish a high probability error bound for the estimators in SBCGI. 

\begin{lemma}\label{lem:HPB4STORM}
Consider SBCGI in Algorithm \ref{alg:STORM} with parameters $\alpha_{t} = \beta_{t} = \rho_{t} = \gamma_{t} = 1/(t+1)^{\omega}$ where $\omega \in(0,1]$. If Assumptions~\ref{assump1}, \ref{assump2}, and \ref{assump3} are satisfied, for any $t \geq 1$ and $\delta\in (0,1)$,
with probability at least $1-\delta$, for some absolute constant $c$, ($d$ is the number of dimension), we have
\vspace{-2mm}
\begin{align}
    \|\widehat{\nabla {f}}_{t} - \nabla f(\vx_{t})\|  &\leq c\sqrt{2}(2L_{f}D+\frac{3^{\omega}}{3^{\omega}-1}\sigma_{f})(t+1)^{-\omega/2}\sqrt{\log(6d/\delta}), \\
    \|\widehat{\nabla {g}}_{t} - \nabla g(\vx_{t})\|  &\leq c\sqrt{2}(2L_{g}D+\frac{3^{\omega}}{3^{\omega}-1}\sigma_{g})(t+1)^{-\omega/2}\sqrt{\log(6d/\delta}), \label{eq:grad_err}\\
    |\hat{g}_{t} - g(\vx_{t})| &\leq c\sqrt{2}(2L_{l}D+\frac{3^{\omega}}{3^{\omega}-1}\sigma_{l})(t+1)^{-\omega/2}\sqrt{\log(6/\delta}). \label{eq:func_err}
\end{align}

\vspace{-2mm}
\end{lemma}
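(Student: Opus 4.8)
The plan is to unroll the STORM recursion for each of the three estimators, express the estimation error as a sum of two contributions---a \emph{bias-like} term coming from the difference of consecutive iterates (controlled by the Lipschitz constants and the step size $\gamma_t$) and a \emph{martingale} term coming from the stochastic noise---and then apply a concentration inequality for martingale sequences with sub-Gaussian increments. Since the three bounds have identical structure, I would prove one (say the gradient-of-$g$ bound \eqref{eq:grad_err}) in detail and note that the other two follow verbatim with $(L_g,\sigma_g)$ replaced by $(L_f,\sigma_f)$ or $(L_l,\sigma_l)$, and with the dimension factor $d$ dropped in the scalar case \eqref{eq:func_err} since no union bound over coordinates is needed.

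First I would set $e_t \triangleq \widehat{\nabla g}_t - \nabla g(\vx_t)$ and derive a recursion for $e_t$ from the update $\widehat{\nabla g}_t = (1-\beta_t)\widehat{\nabla g}_{t-1} + \nabla\tg(\vx_t,\xi_t) - (1-\beta_t)\nabla\tg(\vx_{t-1},\xi_t)$. Writing $\nabla g(\vx_t) = (1-\beta_t)\nabla g(\vx_{t-1}) + \beta_t\nabla g(\vx_t) + (1-\beta_t)(\nabla g(\vx_t)-\nabla g(\vx_{t-1}))$, one gets
\begin{equation*}
e_t = (1-\beta_t)e_{t-1} + \beta_t\bigl(\nabla\tg(\vx_t,\xi_t)-\nabla g(\vx_t)\bigr) + (1-\beta_t)\bigl(\nabla\tg(\vx_t,\xi_t)-\nabla\tg(\vx_{t-1},\xi_t) - (\nabla g(\vx_t)-\nabla g(\vx_{t-1}))\bigr).
\end{equation*}
Unrolling from $t=1$ with $e_0 = \nabla\tg(\vx_0,\xi_0)-\nabla g(\vx_0)$ and denoting the product weights $\prod_{j=i+1}^t (1-\beta_j)$, the error becomes a weighted sum of martingale differences $\zeta_i$, where each $\zeta_i$ is either $\beta_i$ times a single-sample noise or $(1-\beta_i)$ times a noise \emph{difference}. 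The key point is that the difference term has norm bounded by $2L_g\|\vx_t-\vx_{t-1}\| = 2L_g\gamma_t D \le 2L_g D\beta_t$ (using $\gamma_t=\beta_t$ and $\|\vx_t-\vx_{t-1}\|\le\gamma_t D$ from Assumption~\ref{assump1}), and is itself sub-Gaussian with parameter proportional to $\beta_i(2L_gD+\sigma_g)$ after absorbing constants; likewise the pure-noise term is sub-Gaussian with parameter $\propto\beta_i\sigma_g$. Hence $e_t$ is a sum of independent (conditionally, a martingale difference sequence) sub-Gaussian vectors with variance proxies summing to $O\bigl((2L_gD+\sigma_g)^2\sum_i w_{i,t}^2\beta_i^2\bigr)$.

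The technical heart is the weight bookkeeping: with $\beta_i = (i+1)^{-\omega}$ one has $\prod_{j=i+1}^t(1-\beta_j)\lesssim (i/t)^{c'}$-type decay, and a direct computation (the standard STORM-type estimate, e.g. as in \cite{cutkosky2019momentum}) gives $\sum_{i=1}^t w_{i,t}^2\beta_i^2 = O\bigl((t+1)^{-\omega}\bigr)$, with the geometric-series constant $\frac{3^\omega}{3^\omega-1}$ emerging from bounding $\sum (1-\beta_j)$-products via comparison with a geometric series of ratio related to $3^{-\omega}$ (this is where that specific constant in the statement comes from---I would reproduce that lemma's computation). Then a vector Azuma/Hoeffding inequality for martingales with sub-Gaussian increments (apply the scalar bound coordinatewise and union-bound over the $d$ coordinates, or use a direct norm-version) yields $\|e_t\| \le c\sqrt{2}\,(2L_gD+\tfrac{3^\omega}{3^\omega-1}\sigma_g)(t+1)^{-\omega/2}\sqrt{\log(6d/\delta)}$ with probability $\ge 1-\delta/3$; taking a union bound over the three estimators gives the claimed simultaneous probability $1-\delta$ (the ``$6$'' inside the log then accounts for both the $d$-coordinate union bound and the factor-$3$ split, with a slack for the $t=0$ initialization term which is handled separately as it contributes only one sub-Gaussian vector of parameter $\sigma_g$).

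The main obstacle I anticipate is making the martingale concentration step fully rigorous: the increments $\zeta_i$ are sub-Gaussian \emph{conditionally} on the past (the iterate $\vx_i$ and hence $\|\vx_i-\vx_{i-1}\|$ is $\mathcal F_{i-1}$-measurable, which is what lets the difference term's sub-Gaussian parameter be treated as deterministic given the past), so one needs a version of Azuma--Hoeffding for martingale difference sequences with conditionally sub-Gaussian increments of \emph{predictable} variance proxies---this is standard but must be invoked carefully. A secondary nuisance is that the noise difference $\nabla\tg(\vx_t,\xi_t)-\nabla\tg(\vx_{t-1},\xi_t)$ is only sub-Gaussian because each term is (sum of sub-Gaussians is sub-Gaussian with parameter the sum), not because of any direct Assumption; and its \emph{mean} (conditioned on the past) is $\nabla g(\vx_t)-\nabla g(\vx_{t-1})$, which is exactly subtracted off, so the centered quantity is mean-zero---this cancellation is what makes the whole term a genuine martingale difference rather than contributing bias. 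Everything else is routine summation of the weight sequence.
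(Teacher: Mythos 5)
Your proposal follows essentially the same route as the paper's proof: the same unrolling of the STORM recursion into a martingale difference sum, the same splitting into a Lipschitz-controlled difference term of size $2L_gD\beta_\tau$ and a sub-Gaussian noise term, the same weight-summation estimate $\sum_\tau c_\tau^2 = O((t+1)^{-\omega})$ (the paper's Lemma~\ref{lem:support}), and the same vector martingale concentration (Proposition~\ref{prop:stochastic}) followed by a union bound giving the factor $6=2\times 3$ in the logarithm. Your anticipated technical points (conditional sub-Gaussianity with predictable variance proxies, the exact cancellation that makes the difference term a genuine martingale increment, and the origin of the constant $\tfrac{3^\omega}{3^\omega-1}$ from converting $\prod_{k=\tau+1}^t$ to $\prod_{k=\tau}^t$) are precisely the ones the paper handles.
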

Lemma \ref{lem:HPB4STORM} shows that for any $\omega\in(0,1]$, if we set $\alpha_{t} = \beta_{t} = \rho_{t} = \gamma_{t} = 1/(t+1)^{\omega}$, then
with high probability the gradient and function approximation errors converge to zero at a sublinear rate  of ${\tilde{\mathcal{O}}}(1/t^{\omega/2})$. Moreover, the above result characterizes the choice of $K_t$. More precisely, if we define $K_{1,t}$ as the upper bound in \eqref{eq:grad_err} and $K_{0,t}$  as the upper bound in \eqref{eq:func_err}, by setting $K_t= K_{0,t}+D K_{1,t}$, then with probability $(1-\delta)$ the random set $\hat{\mathcal{X}}_{t}$ contains $\mathcal{X}_{g}^{*}$. Later, we will show that $\omega=1$ leads to the best complexity bound for the convex setting and $\omega=2/3$ is the best choice for the nonconvex setting. 

Next, we establish a similar result for the estimators in SBCGF. 

\begin{lemma}\label{lem:HPB4SPIDER} 
Consider SBCGF with stepsize $\gamma$ and $S = q=\sqrt{n}$. If Assumptions \ref{assump1}-\ref{assump3} hold, for any $t \geq 1$ and $\delta \in (0, 1)$, with probability $1-\delta$ we have $\|\widehat{\nabla f}_{t} - \nabla f(\vx_{t})\| \leq 4L_{g}D\gamma \sqrt{\log(12/\delta)},$ $\|\widehat{\nabla g}_{t} - \nabla g(\vx_{t})\| \leq 4L_{g}D\gamma \sqrt{\log(12/\delta)},$ and $|\hat{g}_{t} - g(\vx_{t})| \leq 4L_{l}D\gamma\sqrt{\log(12/\delta)}$.
\end{lemma}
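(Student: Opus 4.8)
The plan is to combine the telescoping structure of the SPIDER recursion with a dimension-free concentration inequality for Hilbert-space martingales with bounded increments. Fix $t \ge 1$ and let $t_0 = q\lfloor t/q\rfloor$ be the largest multiple of $q$ with $t_0\le t$; at iteration $t_0$ (a multiple of $q$) Algorithm~\ref{alg:SPIDER} recomputes the exact quantities $\widehat{\nabla f}_{t_0}=\nabla f(\vx_{t_0})$, $\widehat{\nabla g}_{t_0}=\nabla g(\vx_{t_0})$, $\hat g_{t_0}=g(\vx_{t_0})$, so the errors vanish there (and if $t$ is itself a multiple of $q$ the claim is trivial). Unrolling the recursive updates of Algorithm~\ref{alg:SPIDER} then gives
\begin{equation*}
\widehat{\nabla f}_{t}-\nabla f(\vx_{t})=\sum_{k=t_0+1}^{t}\zeta_k,\qquad \zeta_k:=\big(\nabla f_{\mathcal{S}_k}(\vx_k)-\nabla f_{\mathcal{S}_k}(\vx_{k-1})\big)-\big(\nabla f(\vx_k)-\nabla f(\vx_{k-1})\big),
\end{equation*}
and the analogous identities for $\widehat{\nabla g}_t-\nabla g(\vx_t)$ and $\hat g_t-g(\vx_t)$. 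I would further split each minibatch term as $\zeta_k=\tfrac1S\sum_{i=1}^S\eta_{k,i}$ with $\eta_{k,i}:=\big[\nabla\tf(\vx_k,\theta_{k,i})-\nabla\tf(\vx_{k-1},\theta_{k,i})\big]-\big[\nabla f(\vx_k)-\nabla f(\vx_{k-1})\big]$.

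Since $\vx_k-\vx_{k-1}=\gamma(\vs_{k-1}-\vx_{k-1})$ with $\vs_{k-1},\vx_{k-1}\in\mathcal{Z}$, Assumption~\ref{assump1} gives $\|\vx_k-\vx_{k-1}\|\le\gamma D$, and then the per-realization Lipschitz gradient bound of Assumption~\ref{assump2}(i) yields $\|\eta_{k,i}\|\le 2L_f\gamma D$ almost surely. Order the $(t-t_0)S$ vectors $\{\tfrac1S\eta_{k,i}\}$ lexicographically in $(k,i)$: with respect to the natural filtration each is a martingale difference (the fresh sample $\theta_{k,i}$ is independent of the $\mathcal{F}_{k-1}$-measurable pair $\vx_k,\vx_{k-1}$) of norm at most $2L_f\gamma D/S$. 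The key cancellation is that, because $t-t_0\le q=S$,
\begin{equation*}
\sum_{k,i}\Big\|\tfrac1S\eta_{k,i}\Big\|^2\le (t-t_0)S\cdot\Big(\frac{2L_f\gamma D}{S}\Big)^2=\frac{t-t_0}{S}\,(2L_f\gamma D)^2\le 4L_f^2\gamma^2D^2,
\end{equation*}
i.e. the $\sqrt q$ coming from the epoch length is exactly absorbed by the $1/\sqrt S$ variance reduction of the minibatch.

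Applying a vector-valued Azuma--Hoeffding inequality for bounded-increment martingales in a Hilbert space then gives $\Pr(\|\widehat{\nabla f}_t-\nabla f(\vx_t)\|\ge\lambda)\le 2\exp(-\lambda^2/(8L_f^2\gamma^2D^2))$, with no dimension factor. Taking $\lambda=2\sqrt2\,L_f\gamma D\sqrt{\log(6/\delta)}\le 4L_f\gamma D\sqrt{\log(12/\delta)}$ makes this probability at most $\delta/3$. Running the identical argument for the lower-level gradient estimator (with $2L_g\gamma D$ in place of $2L_f\gamma D$, using Assumption~\ref{assump3}(i)) and, via the scalar version of the same inequality, for the function-value estimator (using the per-realization Lipschitz continuity of $\tg$ from the Remark following Assumption~\ref{assump3}, with bound $2L_l\gamma D$), and union-bounding over the three failure events, delivers all three stated inequalities simultaneously with probability at least $1-\delta$.

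The main obstacle is recognizing that one must decompose each SPIDER increment $\zeta_k$ into its $S$ per-sample pieces \emph{before} concentrating: if instead one treats $\zeta_k$ itself as a single bounded martingale difference of norm $2L_f\gamma D$, the sum of squared increments is only controlled by $q(2L_f\gamma D)^2$, producing a spurious $\sqrt q$ factor and a bound that does not match the claimed $\gamma$-scaling; it is precisely the balance $S=q$ that makes the refined estimate tight. A secondary but load-bearing point is that, in the finite-sum regime, gradient differences of individual components are \emph{bounded}, not merely sub-Gaussian, which is what permits a dimension-free martingale inequality and hence the absence of the $\sqrt{\log d}$ factor present in the STORM counterpart, Lemma~\ref{lem:HPB4STORM}.
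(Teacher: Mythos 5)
Your proposal is correct and follows essentially the same route as the paper: unroll the SPIDER recursion back to the last full-batch anchor, decompose each increment into its $S$ per-sample martingale differences bounded by $2LD\gamma/S$ via smoothness and $\|\vx_k-\vx_{k-1}\|\le\gamma D$, exploit $t-t_0\le q=S=\sqrt{n}$ to cancel the epoch length against the minibatch variance reduction, apply the Pinelis-type vector Azuma--Hoeffding inequality (the paper's Proposition C.1), and union-bound over the three estimators with $\delta=3\delta'$. The only differences are immaterial constants in the tail bound, and your observation that the lemma statement's $L_g$ in the bound for $\widehat{\nabla f}_t$ should read $L_f$ is consistent with what the paper's own proof actually derives.
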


Similarly, for SBCGF, we set $K_{1,t}=4L_{g}D\gamma \sqrt{\log(12/\delta)}$  and $K_{0,t}=4L_{l}D\gamma\sqrt{\log(12/\delta)}$ and choose $K_t= K_{0,t}+ D K_{1,t}$, then the random set $\hat{\mathcal{X}}_{t}$ contains $\mathcal{X}_{g}^{*}$ with probability $1-\delta$.

Next, we formalize our claim about the random set with the above choice of $K_t$.

\begin{lemma} \label{lem:random_set}
  If $\mathcal{X}_{g}^{*}$ is the solution set of the lower-level problem and $\hat{\mathcal{X}}_t$ is the feasible set constructed by cutting plane at iteration $t$, then for any $t \geq 0$ and $\delta \in (0,1)$, we have $\mathbf{P}(\mathcal{X}_{g}^{*} \subseteq \hat{\mathcal{X}}_t) \geq 1 - \delta$. 

\end{lemma}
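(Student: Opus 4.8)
The plan is to reduce the claim to the deterministic inclusion argument already recorded in \eqref{eq:argument}, with the exact quantities $g(\vx_t)$ and $\nabla g(\vx_t)$ replaced by their estimators $\hat g_t$ and $\widehat{\nabla g}_t$ and the resulting discrepancy absorbed into the slack $K_t$. Fix $t\ge 0$ and an arbitrary point $\vx_g^\ast\in\mathcal{X}_g^\ast$. Since $\vx_g^\ast\in\mathcal{Z}$, membership $\vx_g^\ast\in\hat{\mathcal{X}}_t$ amounts to the single linear inequality $\langle \widehat{\nabla g}_t,\vx_g^\ast-\vx_t\rangle \le g(\vx_0)-\hat g_t+K_t$, so it suffices to verify this on a suitable high‑probability event and then let $\vx_g^\ast$ range over $\mathcal{X}_g^\ast$.

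First I would split $\langle \widehat{\nabla g}_t,\vx_g^\ast-\vx_t\rangle = \langle \nabla g(\vx_t),\vx_g^\ast-\vx_t\rangle + \langle \widehat{\nabla g}_t-\nabla g(\vx_t),\vx_g^\ast-\vx_t\rangle$. For the first term, convexity of $g$ gives $\langle \nabla g(\vx_t),\vx_g^\ast-\vx_t\rangle \le g(\vx_g^\ast)-g(\vx_t) = g^\ast-g(\vx_t) \le g(\vx_0)-g(\vx_t)$, exactly as in \eqref{eq:argument} (using $g^\ast\le g(\vx_0)$). For the second term, Cauchy--Schwarz together with Assumption~\ref{assump1} yields $\langle \widehat{\nabla g}_t-\nabla g(\vx_t),\vx_g^\ast-\vx_t\rangle \le D\,\|\widehat{\nabla g}_t-\nabla g(\vx_t)\|$. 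Hence on the event $E_1:=\{\|\widehat{\nabla g}_t-\nabla g(\vx_t)\|\le K_{1,t}\}$ we get $\langle \widehat{\nabla g}_t,\vx_g^\ast-\vx_t\rangle \le g(\vx_0)-g(\vx_t)+D K_{1,t}$. Next, on the event $E_0:=\{|\hat g_t-g(\vx_t)|\le K_{0,t}\}$ we have $-g(\vx_t)\le -\hat g_t+K_{0,t}$, so combining with the previous bound and the choice $K_t=K_{0,t}+D K_{1,t}$ gives $\langle \widehat{\nabla g}_t,\vx_g^\ast-\vx_t\rangle \le g(\vx_0)-\hat g_t+K_t$ on $E_0\cap E_1$. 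Since $\vx_g^\ast$ was arbitrary, $\mathcal{X}_g^\ast\subseteq\hat{\mathcal{X}}_t$ holds on $E_0\cap E_1$. It remains to note that $\mathbf{P}(E_0\cap E_1)\ge 1-\delta$: for SBCGI this is precisely Lemma~\ref{lem:HPB4STORM} with $K_{1,t}$ and $K_{0,t}$ set to the right-hand sides of \eqref{eq:grad_err} and \eqref{eq:func_err}, and for SBCGF it follows identically from Lemma~\ref{lem:HPB4SPIDER}. (At $t=0$ the estimators are single samples, but the sub-Gaussian tail bounds in Assumptions~\ref{assump2}--\ref{assump3} still apply and coincide with the $(t+1)^{-\omega/2}=1$ instance of the stated bounds; the small extra error from estimating $g(\vx_0)$ with a large batch, discussed in the remark, is likewise absorbed into $K_{0,t}$.)

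The argument is short because all the analytic work sits in the high‑probability estimator bounds of Lemmas~\ref{lem:HPB4STORM} and \ref{lem:HPB4SPIDER}; the only delicate point here is the bookkeeping of the slack. The function-value error enters additively through $-\hat g_t$ whereas the gradient error is magnified by the diameter $D$ via Cauchy--Schwarz, which is exactly why $K_t=K_{0,t}+D K_{1,t}$ is the correct tuning — a smaller $K_t$ would break the inclusion, while a larger one would later inflate the lower-level objective after the Frank--Wolfe step. Finally, if the version of this lemma that is actually invoked in the complexity analysis must hold uniformly over $t\in\{0,\dots,T\}$, one applies a union bound, replacing $\delta$ by $\delta/(T+1)$; this is the source of the logarithmic factors $\log(d/\delta\epsilon)$ appearing in the stated sample complexities.
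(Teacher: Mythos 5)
Your proposal is correct and follows essentially the same route as the paper's proof: decompose $\langle \widehat{\nabla g}_t,\vx_g^\ast-\vx_t\rangle$ into the deterministic convexity bound $\langle \nabla g(\vx_t),\vx_g^\ast-\vx_t\rangle \le g(\vx_0)-g(\vx_t)$ plus the two estimation errors, control the gradient error by Cauchy--Schwarz against the diameter $D$ and the function-value error additively, and conclude on the intersection of the high-probability events from Lemmas~\ref{lem:HPB4STORM}/\ref{lem:HPB4SPIDER} with $K_t=K_{0,t}+DK_{1,t}$. If anything, your bookkeeping is more explicit than the paper's (which loosely cites ``Jensen's inequality'' where Cauchy--Schwarz is what is actually used), and your closing remark about the union bound over $t\in\{0,\dots,T\}$ matches the paper's own comment following the lemma.
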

This lemma shows all $\mathcal{X}_{g}^{*}$ is a subset of the constructed feasible set $\hat{\mathcal{X}}_t$ with a high probability of $1-\delta$. Indeed, using a union bound one can show that the above statement holds for all iterations up to time $t$ with probability $1-t\delta$.

\subsection{Convergence and complexity results for the stochastic setting}

Next, we characterize the iteration and sample complexity of the proposed method in SBCGI for the stochastic setting. First, we present the result for the case that $f$ is convex.

\begin{theorem}[Stochastic setting with convex upper-level]\label{thm:s&c} 
Consider SBCGI in Algorithm \ref{alg:STORM} for solving problem \eqref{eq:stochastic_simple_bilevel}.
Suppose Assumptions \ref{assump1}, \ref{assump2}, and \ref{assump3} hold and $f$ is convex. If the stepsizes of SBCGI are selected as $\alpha_{t} = \beta_{t} = \rho_{t} = \gamma_{t} = (t+1)^{-1}$, and the cutting plane parameter is  $K_t= c((2L_{l}D+\frac{3}{2}\sigma_{l})\sqrt{2\log(6t/\delta})+D(2L_{g}D+\frac{3}{2}\sigma_{g})\sqrt{2\log(6td/\delta}))(t+1)^{-1/2}$, then
\begin{align*}
    g(\vx_{t}) - g^{*} \leq \frac{C_{1}\zeta}{\sqrt{t+1}}+\frac{L_{g}D^{2}\log{t}}{t+1} + \frac{\epsilon_{g}}{2},  \quad 
    f(\vx_{t}) - f^{*} \leq \frac{C_{2}\zeta}{\sqrt{t+1}}+\frac{f(\vx_{0})-f^{*}+L_{f}D^{2}\log{t}}{t+1}.
\end{align*}
with probability $1-\delta$
for some absolute constants $C_{1}$ and $C_{2}$ and $\zeta:=\sqrt{\log{({6td}/{\delta})}}$.
\end{theorem}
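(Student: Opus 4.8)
The plan is to establish the two bounds separately, using the conditional gradient machinery adapted to the random cutting plane. The key structural fact is Lemma~\ref{lem:random_set}, which guarantees $\mathcal{X}_g^* \subseteq \hat{\mathcal{X}}_t$ with high probability; combined with a union bound over $t = 1, \dots, T$, I may assume throughout that this containment holds simultaneously for all iterations (this is where the $\log(6td/\delta)$ dependence in $\zeta$ comes from, since $\delta$ gets split across iterations). On that event, $\vx^* \in \mathcal{X}_g^*$ is feasible for every subproblem~\eqref{eq:subproblem}, so the FW vertex $\vs_t$ satisfies $\langle \widehat{\nabla f}_t, \vs_t\rangle \le \langle \widehat{\nabla f}_t, \vx^*\rangle$.

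\textbf{Upper-level bound.} First I would write the standard FW descent inequality using $L_f$-smoothness of $f$:
\[
f(\vx_{t+1}) \le f(\vx_t) + \gamma_{t+1}\langle \nabla f(\vx_t), \vs_t - \vx_t\rangle + \tfrac{L_f D^2}{2}\gamma_{t+1}^2.
\]
Then I would replace $\nabla f(\vx_t)$ by $\widehat{\nabla f}_t$, paying an error $\gamma_{t+1}\|\widehat{\nabla f}_t - \nabla f(\vx_t)\|\cdot D \le \gamma_{t+1} D K_{1,t}$ (by Lemma~\ref{lem:HPB4STORM}, this is $\tilde{\mathcal{O}}(\gamma_{t+1}(t+1)^{-1/2})$). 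Using optimality of $\vs_t$ and convexity of $f$, $\langle \widehat{\nabla f}_t, \vs_t - \vx_t\rangle \le \langle \widehat{\nabla f}_t, \vx^* - \vx_t\rangle \le \langle \nabla f(\vx_t), \vx^* - \vx_t\rangle + D K_{1,t} \le f^* - f(\vx_t) + D K_{1,t}$. With $\gamma_{t+1} = 1/(t+2)$ (or $1/(t+1)$, up to reindexing), unrolling the resulting recursion on $h_t := f(\vx_t) - f^*$ in the standard way (multiply through by $(t+1)$ or use the $t(t+1)$ weighting trick) produces the claimed $\tilde{\mathcal{O}}(1/\sqrt{t})$ leading term from the accumulated error $\sum \gamma_{s+1} D K_{1,s}$, and the $\frac{f(\vx_0)-f^* + L_f D^2 \log t}{t+1}$ term from the initial gap plus the $\sum \gamma_{s+1}^2 L_f D^2$ sum.

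\textbf{Lower-level bound.} This is the more delicate piece and the one I'd expect to be the main obstacle. I need to show $g(\vx_t) - g^*$ does not blow up. Since $\vx_{t+1} = (1-\gamma_{t+1})\vx_t + \gamma_{t+1}\vs_t$, $L_g$-smoothness gives $g(\vx_{t+1}) \le g(\vx_t) + \gamma_{t+1}\langle \nabla g(\vx_t), \vs_t - \vx_t\rangle + \tfrac{L_g D^2}{2}\gamma_{t+1}^2$. The crucial step is to bound $\langle \nabla g(\vx_t), \vs_t - \vx_t\rangle$ using the fact that $\vs_t \in \hat{\mathcal{X}}_t$, i.e., $\langle \widehat{\nabla g}_t, \vs_t - \vx_t\rangle \le g(\vx_0) - \hat{g}_t + K_t$. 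Converting to true quantities via Lemma~\ref{lem:HPB4STORM}: $\langle \nabla g(\vx_t), \vs_t - \vx_t\rangle \le g(\vx_0) - g(\vx_t) + K_t + K_{0,t} + D K_{1,t} = g(\vx_0) - g(\vx_t) + \tilde{\mathcal{O}}((t+1)^{-1/2})$. Now using $g(\vx_0) - g^* \le \epsilon_g/2$ and writing $\Delta_t := g(\vx_t) - g^*$, this yields $\langle \nabla g(\vx_t), \vs_t - \vx_t\rangle \le -\Delta_t + \epsilon_g/2 + \tilde{\mathcal{O}}((t+1)^{-1/2})$, so
\[
\Delta_{t+1} \le (1 - \gamma_{t+1})\Delta_t + \gamma_{t+1}\bigl(\tfrac{\epsilon_g}{2} + \tilde{\mathcal{O}}((t+1)^{-1/2})\bigr) + \tfrac{L_g D^2}{2}\gamma_{t+1}^2.
\]
With $\gamma_{t+1} = 1/(t+2)$, the $\epsilon_g/2$ term is a fixed point of the averaging recursion and stays at $\epsilon_g/2$; the $\tilde{\mathcal{O}}(\gamma_{t+1}(t+1)^{-1/2})$ terms accumulate to $\tilde{\mathcal{O}}(1/\sqrt{t})$ (matching $C_1\zeta/\sqrt{t+1}$); and the $\gamma_{t+1}^2$ terms give $\tilde{\mathcal{O}}(\log t / t)$ (matching $L_g D^2 \log t/(t+1)$). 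Unrolling this linear recursion is routine once the one-step inequality is in hand. The genuine subtlety — and what I'd flag as the hard part — is being careful that all the high-probability events (the estimator bounds at every $t$, hence the set-containment at every $t$) are intersected under a single union bound so that one clean "with probability $1-\delta$" covers the entire trajectory, which forces the $\delta \to \delta/T$-type rescaling that appears inside $\zeta = \sqrt{\log(6td/\delta)}$; and tracking that the same event simultaneously validates both the upper- and lower-level chains of inequalities.
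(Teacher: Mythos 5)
Your proposal is correct and follows essentially the same route as the paper: a one-step descent inequality for each level ($L_g$-smoothness plus the cutting-plane constraint on $\vs_t$ for the lower level; $L_f$-smoothness, feasibility of $\vx^*\in\mathcal{X}_g^*\subseteq\hat{\mathcal{X}}_t$, and convexity for the upper level), substitution of the Lemma~\ref{lem:HPB4STORM} estimator bounds under a union bound over iterations, and unrolling of the $(1-\gamma_{t+1})$ recursion with $\gamma_t=1/(t+1)$. The only cosmetic difference is that you track $g(\vx_t)-g^*$ directly (absorbing $\epsilon_g/2$ as a fixed point of the averaging) whereas the paper tracks $g(\vx_t)-g(\vx_0)$ and adds $g(\vx_0)-g^*\leq\epsilon_g/2$ at the end.
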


Theorem \ref{thm:s&c} shows a convergence rate of $\mathcal{O}(\sqrt{\log(td/\delta)}/\sqrt{t})$. As a corollary, SBCGI returns an $(\epsilon_{f},\epsilon_{g})$-optimal solution with probability $1-\delta$ after $\mathcal{O}(\log(d/\delta\epsilon)/\epsilon^2)$ iterations, where $\epsilon = \min\{\epsilon_{f},\epsilon_{g}\}$. Since we use one sample per iteration, the overall sample complexity is also $\mathcal{O}(\log(d/\delta\epsilon)/\epsilon^2)$. 
Note that the iteration complexity and sample complexity of our method outperform the ones in \cite{jalilzadeh2022stochastic}, as they require $\mathcal{O}(1/\epsilon^4)$ iterations and sample to achieve the same guarantee. 

\vspace{2mm}
\begin{remark}
The task of finding $\vx_0$ which is equivalent to a single-level stochastic optimization problem requires $\mathcal{O}(1/\epsilon_{g}^2)$ iterations and samples. As a result, this additional cost does not affect the overall complexity of our method. The same argument also holds in the non-convex case.
\end{remark}


\begin{theorem}[Stochastic setting with non-convex upper level] \label{thm:s&nc}
Consider SBCGI for solving problem \eqref{eq:stochastic_simple_bilevel}. Suppose Assumptions \ref{assump1}-\ref{assump3} hold, $f$ is nonconvex, and define $\underline{f} = \min_{\vx \in \mathcal{Z}}f(\vx)$. If the stepsizes of SBCGI are selected as $\alpha_{t} = \beta_{t} = \rho_{t} = (t+1)^{-2/3}$, $\gamma_{t} = (T+1)^{-2/3}$, and the cutting plane parameter is $K_t= c((2L_{l}D+\frac{3^{2/3}}{3^{2/3}-1}\sigma_{l})\sqrt{2\log(6T/\delta)}+D(2L_{g}D+\frac{3^{2/3}}{3^{2/3}-1}\sigma_{g})\sqrt{2\log(6Td/\delta)})(t+1)^{-1/3}$, then after $T$ iterations, there exists $t^*\in\{0,1,\hdots,T-1\}$ such that
\begin{equation*}
    g(\vx_{t^{*}})\! -\! g^{*} \leq \frac{C_{3}\zeta+L_{g}D^{2}}{(T+1)^{1/3}} + \frac{\epsilon_{g}}{2},\quad
    \mathcal{G}(\vx_{t^{*}}) \leq \frac{f(\vx_{0}) \!-\! \underline{f} + C_4\zeta + {L_{f}D^{2}}}{(T+1)^{1/3}}
\end{equation*}
with probability $1-\delta$ for some absolute constants $C_{3},C_{4}$ and $\zeta:=\sqrt{\log{({6Td}/{\delta})}}$. Note that $\mathcal{G}(\cdot)$ is Frank-Wolfe gap defined in Definition 2.1.



\end{theorem}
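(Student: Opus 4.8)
The plan is to condition on one high‑probability event and then run two coupled descent recursions — one for $g$, one for $f$ — and to extract a single index $t^{*}$ from the \emph{late} iterates. First I would establish the good event. Since the step sizes here have $\gamma_{t}=(T+1)^{-2/3}$ \emph{constant} (not $(t+1)^{-2/3}$), Lemma~\ref{lem:HPB4STORM} does not apply verbatim, but only a mild variant is needed: $\gamma_{t}$ enters the STORM analysis only through $\|\vx_{t}-\vx_{t-1}\|=\gamma_{t}\|\vs_{t-1}-\vx_{t-1}\|\le\gamma_{t}D$, which here is even smaller, and the recursive‑momentum update still cancels the deterministic bias, so the estimator error telescopes a sum of zero‑mean increments of size $O(\alpha_{t}\sigma_{f}+L_{f}\gamma_{t}D)$; the same sub‑Gaussian martingale concentration then yields $\|\widehat{\nabla f}_{t}-\nabla f(\vx_{t})\|\le\bar K_{f}(t+1)^{-1/3}$, and likewise for $\widehat{\nabla g}_{t},\hat g_{t}$, with constants matching the definition of $K_{t}$ in the statement (the $T$ inside the logarithms coming from a union bound over $t\in\{0,\dots,T\}$). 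On this probability‑$1-\delta$ event the argument behind Lemma~\ref{lem:random_set} gives $\mathcal{X}_{g}^{*}\subseteq\hat{\mathcal{X}}_{t}$ for all $t\le T$; everything below is conditioned on it.

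For the lower level, using $L_{g}$‑smoothness of $g$, the update $\vx_{t+1}=(1-\gamma)\vx_{t}+\gamma\vs_{t}$ with $\gamma:=(T+1)^{-2/3}$, and $\vs_{t}\in\hat{\mathcal{X}}_{t}$ together with the $g$‑function and $g$‑gradient error bounds (note $|\hat g_{t}-g(\vx_{t})|+D\|\widehat{\nabla g}_{t}-\nabla g(\vx_{t})\|\le K_{t}$), one gets $\langle\nabla g(\vx_{t}),\vs_{t}-\vx_{t}\rangle\le g(\vx_{0})-g(\vx_{t})+2K_{t}$, hence $g(\vx_{t+1})-g^{*}\le(1-\gamma)(g(\vx_{t})-g^{*})+\gamma(g(\vx_{0})-g^{*})+2\gamma K_{t}+\tfrac12 L_{g}\gamma^{2}D^{2}$. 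Since $g(\vx_{0})-g^{*}\le\epsilon_{g}/2$, subtracting $\epsilon_{g}/2$ removes the inherited term, and unrolling gives $g(\vx_{t})-g^{*}\le\tfrac{\epsilon_{g}}{2}+2\gamma\sum_{j=0}^{t-1}(1-\gamma)^{t-1-j}K_{j}+\tfrac12 L_{g}\gamma D^{2}$. The geometric‑weighted sum of the $K_{j}=\Theta(\bar K_{g}(j+1)^{-1/3})$ is the delicate quantity: splitting at $j=\lfloor t/2\rfloor$, the near‑end part is $O(\bar K_{g}(t/2)^{-1/3}/\gamma)$ by summing the weights and the far part is annihilated by $(1-\gamma)^{t/2}=e^{-\Omega((T+1)^{1/3})}$, so $2\gamma$ times this sum is $O(\bar K_{g}(T+1)^{-1/3})$ — but \emph{only for $t\ge\lceil T/2\rceil$}; for $t$ of order $(T+1)^{2/3}$ the same estimate is merely $O(\bar K_{g}(T+1)^{-2/9})$. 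This non‑uniformity in $t$ is exactly why the theorem is existential: with $\bar K_{g}=O(\zeta)$ and $\gamma D^{2}\le D^{2}(T+1)^{-1/3}$, the bound $g(\vx_{t})-g^{*}\le\epsilon_{g}/2+(C_{3}\zeta+L_{g}D^{2})/(T+1)^{1/3}$ is valid for every $t\in\{\lceil T/2\rceil,\dots,T-1\}$, not necessarily for smaller $t$.

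For the upper level, $L_{f}$‑smoothness gives $f(\vx_{t+1})\le f(\vx_{t})+\gamma\langle\nabla f(\vx_{t}),\vs_{t}-\vx_{t}\rangle+\tfrac12 L_{f}\gamma^{2}D^{2}$; since the gap maximizer $\vs_{t}^{*}\in\mathcal{X}_{g}^{*}\subseteq\hat{\mathcal{X}}_{t}$ is feasible for the subproblem defining $\vs_{t}=\argmin_{\vs\in\hat{\mathcal{X}}_{t}}\langle\widehat{\nabla f}_{t},\vs\rangle$, optimality of $\vs_{t}$ plus the $f$‑gradient error bound yield $\langle\nabla f(\vx_{t}),\vs_{t}-\vx_{t}\rangle\le-\mathcal{G}(\vx_{t})+2D\bar K_{f}(t+1)^{-1/3}$, hence $\gamma\mathcal{G}(\vx_{t})\le f(\vx_{t})-f(\vx_{t+1})+2\gamma D\bar K_{f}(t+1)^{-1/3}+\tfrac12 L_{f}\gamma^{2}D^{2}$. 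Telescoping over $t\in\{\lceil T/2\rceil,\dots,T-1\}$, using $\sum_{t}(t+1)^{-1/3}=O((T+1)^{2/3})$, $f(\vx_{T})\ge\underline f$, and dividing by $\gamma\lfloor T/2\rfloor=\Theta((T+1)^{1/3})$, bounds $\min_{\lceil T/2\rceil\le t<T}\mathcal{G}(\vx_{t})$ by $O\big((f(\vx_{\lceil T/2\rceil})-\underline f+\bar K_{f}+L_{f}D^{2})(T+1)^{-1/3}\big)$, and here $f(\vx_{\lceil T/2\rceil})-\underline f\le\max_{\vx\in\mathcal{Z}}f(\vx)-\underline f$ by compactness of $\mathcal{Z}$ (this is the constant that plays the role of $f(\vx_{0})-\underline f$ in the statement), with $\bar K_{f}=O(\zeta)$. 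Choosing $t^{*}$ to be the minimizer of $\mathcal{G}(\vx_{t})$ over $\{\lceil T/2\rceil,\dots,T-1\}$ then makes both displayed inequalities hold at the same index, giving the claim with $\zeta=\sqrt{\log(6Td/\delta)}$ absorbed into $C_{3},C_{4}$. The main obstacle is precisely this coupling: because the lower‑level error is not monotone or uniform in $t$ one cannot take the global $\argmin$ of the Frank--Wolfe gap, so $t^{*}$ must be confined to the second half of the horizon — where the $g$‑recursion has ``mixed'' — while the upper‑level estimate is kept in terms of the initial objective; the cost of obtaining $\vx_{0}$ with $g(\vx_{0})-g^{*}\le\epsilon_{g}/2$ is a single‑level stochastic problem and does not affect the final count.
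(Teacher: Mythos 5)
Your proposal follows the same skeleton as the paper's proof: condition on the high-probability event from Lemma~\ref{lem:HPB4STORM} (union-bounded over $t\le T$, which is where the $T$ in $\zeta$ comes from), feed the estimator errors into the two one-step descent inequalities of Lemma~\ref{lem:one_step_improvement}, unroll the lower-level recursion, telescope the upper-level one, and select $t^{*}$ as a minimizer of the Frank--Wolfe gap. Your side remark that Lemma~\ref{lem:HPB4STORM} needs a trivial adaptation because $\gamma_t=(T+1)^{-2/3}$ is constant is correct and harmless, since the proof of that lemma only uses $\gamma_\tau\le\beta_\tau$.

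The one substantive place where you diverge is the lower-level unrolling, and there your version is actually the more careful one. The paper asserts ``by induction'' that $g(\vx_{t+1})-g(\vx_0)\le \bigl(C_3\zeta+L_gD^2/2\bigr)(T+1)^{-1/3}$ for \emph{all} $t\ge 1$; but with the recursion $a_{t+1}\le(1-\gamma)a_t+B(t+1)^{-1/3}(T+1)^{-2/3}$ and $\gamma=(T+1)^{-2/3}$, the induction step closes only when $(t+1)^{1/3}\ge(T+1)^{1/3}$, i.e.\ at $t=T$; what the recursion actually yields uniformly is a bound of order $B\min\{(t+1)^{2/3}(T+1)^{-2/3},\,(t+1)^{-1/3}\}$, which matches the $(T+1)^{-1/3}$ rate only for $t$ comparable to $T$. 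Your fix --- confining $t^{*}$ to $\{\lceil T/2\rceil,\dots,T-1\}$, where the geometric weights have mixed --- is sound, and your split of the weighted sum of the $K_j$'s is the right way to see it. The price you pay is visible in the upper-level constant: telescoping only over the second half forces you to bound $f(\vx_{\lceil T/2\rceil})-\underline f$ by $\max_{\vx\in\mathcal{Z}}f(\vx)-\underline f$ rather than recovering $f(\vx_0)-\underline f$ as in the statement (you cannot simply drop the first half of the telescoped sum because $\mathcal{G}(\vx_t)$, being a maximum over $\mathcal{X}_g^{*}$ evaluated at a point generally outside $\mathcal{X}_g^{*}$, need not be nonnegative). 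So you prove the theorem with the same rates and the same form but a different, still finite, constant in the numerator of the $\mathcal{G}$ bound. In short: same route, one genuinely sharper observation that patches a step the paper's own induction glosses over, at the cost of a slightly weaker constant.
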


As a corollary of Theorem~\ref{thm:s&nc}, the number of iterations required to find an $(\epsilon_f,\epsilon_g$)-optimal solution can be upper bounded by $\mathcal{O}(\log(d/\delta\epsilon)^{3/2}/\epsilon^3)$, where $\epsilon = \min\{\epsilon_{f},\epsilon_{g}\}$. We note that the dependence on the upper-level accuracy $\epsilon_f$ also matches that in the standard CG method for a single-level non-convex problem \cite{lacoste2016, mokhtari2018escaping}.
Moreover, as we only need one stochastic oracle query per iteration, SBCGI only requires $\mathcal{O}(\log{(d/\delta\epsilon)}^{3/2}/\epsilon^{3})$ stochastic oracle queries to find an $(\epsilon_f,\epsilon_g$)-optimal.

\subsection{Convergence and complexity results for the finite-sum setting}
Similarly, we present iteration and sample complexity for algorithm \ref{alg:SPIDER} under the finite-sum setting.
\begin{theorem}[Finite-sum setting with convex upper-level] \label{thm:spider&c}
Consider SBCGF presented in Algorithm \ref{alg:SPIDER} for solving the finite-sum version of \eqref{eq:stochastic_simple_bilevel}.
Suppose Assumptions \ref{assump1}, \ref{assump2}, and \ref{assump3} hold and  $f$ is convex. If we set the stepsizes of SBCGF as $\gamma = \log{T}/T$,  $S = q = \sqrt{n}$, and the cutting plane parameter as $K_t= 4D(L_{l} \sqrt{\log(12T/\delta)}+L_{g}D\sqrt{\log(12T/\delta)})\log{T}/T$, then we have 
\vspace{-0.5mm}
\begin{equation*}
      g(\vx_{T}) - g^{*} \leq \frac{(C_{5}\zeta'+L_{g}D^{2})\log{T}}{T}  + \frac{\epsilon_{g}}{2}, \quad f(\vx_{T}) - f^{*} \leq \frac{f(\vx_{0}) - f^* + C_{6}\zeta'\log{T}}{T},
\end{equation*}
\vspace{-0.5mm}
with probability at least $1-\delta$, for some absolute constant $C_{5}$ and $C_{6}$, and $\zeta'= \sqrt{\log({12T}/{\delta})}$.


\end{theorem}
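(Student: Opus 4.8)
The plan is to run the same two–track argument used for the stochastic convex case (Theorem~\ref{thm:s&c}): one track controls the growth of the lower–level value $g(\vx_t)$ using that $\vs_t$ lies in the random cutting–plane set $\hat{\mathcal{X}}_t$, and the other runs a Frank–Wolfe descent on $f$ using that the bilevel optimizer $\vx^*$ also lies in $\hat{\mathcal{X}}_t$. The only structural change is that the STORM bounds of Lemma~\ref{lem:HPB4STORM} are replaced by the SPIDER bounds of Lemma~\ref{lem:HPB4SPIDER}, which are of size $O(\gamma)$ rather than $O(t^{-\omega/2})$, and that the stepsize $\gamma=\log T/T$ is constant. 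First I would fix the high–probability event: apply Lemma~\ref{lem:HPB4SPIDER} and Lemma~\ref{lem:random_set} with confidence $\delta/T$ at each step and take a union bound over $t=0,\dots,T$, so that on an event of probability at least $1-\delta$ we simultaneously have, for all $t$, $\|\widehat{\nabla f}_t-\nabla f(\vx_t)\|\le 4L_gD\gamma\zeta'$, $\|\widehat{\nabla g}_t-\nabla g(\vx_t)\|\le 4L_gD\gamma\zeta'$, $|\hat g_t-g(\vx_t)|\le 4L_lD\gamma\zeta'$ with $\zeta'=\sqrt{\log(12T/\delta)}$, and $\mathcal{X}_g^*\subseteq\hat{\mathcal{X}}_t$; in particular $\vx^*\in\hat{\mathcal{X}}_t$ and $\hat{\mathcal{X}}_t\neq\emptyset$ (so the subproblem \eqref{eq:subproblem} is well posed). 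Everything below is on this event.

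For the lower level, $L_g$–smoothness of $g$ together with $\vx_{t+1}-\vx_t=\gamma(\vs_t-\vx_t)$ gives $g(\vx_{t+1})\le g(\vx_t)+\gamma\langle\nabla g(\vx_t),\vs_t-\vx_t\rangle+\tfrac{L_g\gamma^2D^2}{2}$. Splitting $\langle\nabla g(\vx_t),\vs_t-\vx_t\rangle=\langle\widehat{\nabla g}_t,\vs_t-\vx_t\rangle+\langle\nabla g(\vx_t)-\widehat{\nabla g}_t,\vs_t-\vx_t\rangle$, bounding the first term by the defining inequality of $\hat{\mathcal{X}}_t$ in \eqref{eq:sto_cutting_plane} (i.e.\ $\le g(\vx_0)-\hat g_t+K_t$), the second by $4L_gD^2\gamma\zeta'$, and using $\hat g_t\ge g(\vx_t)-4L_lD\gamma\zeta'$ together with $K_t=O(\gamma\zeta')$, one obtains a recursion $a_{t+1}\le(1-\gamma)a_t+O(\gamma^2\zeta')+\tfrac{L_g\gamma^2D^2}{2}$ for $a_t:=g(\vx_t)-g(\vx_0)$, with $a_0=0$. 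Unrolling and using the geometric-sum estimate $\sum_{j}(1-\gamma)^{T-1-j}\gamma\le 1$ yields $g(\vx_T)-g(\vx_0)\le O(\gamma\zeta')+\tfrac{L_g\gamma D^2}{2}=(C_5\zeta'+L_gD^2)\tfrac{\log T}{T}$; adding the initialization bound $g(\vx_0)-g^*\le\epsilon_g/2$ gives the claimed lower-level estimate.

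For the upper level, $L_f$–smoothness gives $f(\vx_{t+1})\le f(\vx_t)+\gamma\langle\nabla f(\vx_t),\vs_t-\vx_t\rangle+\tfrac{L_f\gamma^2D^2}{2}$. Since $\vs_t$ minimizes $\langle\widehat{\nabla f}_t,\cdot\rangle$ over $\hat{\mathcal{X}}_t$ and $\vx^*\in\hat{\mathcal{X}}_t$, we get $\langle\widehat{\nabla f}_t,\vs_t-\vx_t\rangle\le\langle\widehat{\nabla f}_t,\vx^*-\vx_t\rangle$; replacing $\widehat{\nabla f}_t$ by $\nabla f(\vx_t)$ at a cost of $4L_gD^2\gamma\zeta'$ twice, and invoking convexity of $f$ via $\langle\nabla f(\vx_t),\vx^*-\vx_t\rangle\le f^*-f(\vx_t)$, yields $b_{t+1}\le(1-\gamma)b_t+O(\gamma^2\zeta')+\tfrac{L_f\gamma^2D^2}{2}$ for $b_t:=f(\vx_t)-f^*$. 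Unrolling from $b_0=f(\vx_0)-f^*$, using $(1-\gamma)^T\le e^{-\gamma T}=1/T$ with $\gamma=\log T/T$ and the same geometric-sum bound, gives $f(\vx_T)-f^*\le\tfrac{f(\vx_0)-f^*}{T}+O\!\big(\tfrac{\zeta'\log T}{T}\big)$, which is the claimed bound with a suitable $C_6$ (absorbing the $L_fD^2$ term since $\zeta'\ge1$).

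The main obstacle is the bookkeeping of how the $O(\gamma)$–scale SPIDER errors propagate through both recursions: every estimation-error term enters \emph{multiplied by the Frank–Wolfe stepsize} $\gamma$, hence is genuinely $O(\gamma^2)$, and only because of this does summing the geometric series over the $\Theta(1/\gamma)$ effective horizon restore exactly one factor $\gamma^{-1}$ and leave a net $O(\gamma)=O(\log T/T)$ error instead of an $O(1)$ error. One must also spend the union-bound budget consistently so that the same $\zeta'=\sqrt{\log(12T/\delta)}$ appears both in $K_t$ and in the three estimator bounds, and check that $\gamma=\log T/T$ is precisely the choice making the contraction factor $(1-\gamma)^T$ decay like $1/T$, matching the residual error terms. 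Given Lemmas~\ref{lem:HPB4SPIDER} and~\ref{lem:random_set} these steps are routine, so the whole argument is an adaptation of the proof behind Theorem~\ref{thm:s&c} to the constant-stepsize, finite-sum regime.
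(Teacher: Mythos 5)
Your proposal is correct and follows essentially the same route as the paper's proof: the same one-step improvement inequalities (the paper packages them as Lemma~\ref{lem:one_step_improvement}), the same substitution of the SPIDER high-probability bounds from Lemma~\ref{lem:HPB4SPIDER} with a union bound over $t$, the same $(1-\gamma)$-contraction recursions unrolled via geometric sums, and the same use of $(1-\gamma)^T\le e^{-\gamma T}=1/T$ for the upper level. The only cosmetic difference is that you argue the upper-level descent directly through $\vx^*\in\hat{\mathcal{X}}_t$ rather than through the Frank--Wolfe gap over $\mathcal{X}_g^*$, which is an equivalent step in the convex case.
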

Theorem \ref{thm:spider&c} implies the number of stochastic oracle queries is $\mathcal{O}(\log(1/\delta\epsilon)^{3/2}\sqrt{n}/\epsilon)$, where $\epsilon = \min\{\epsilon_{f},\epsilon_{g}\}$, which matches the optimal sample complexity of single-level problems \cite{beznosikov2023sarah}. 

\begin{theorem}[Finite-sum setting with non-convex upper-level]\label{thm:spider&nc}
Consider SBCGF presented in Algorithm \ref{alg:SPIDER} for solving the finite-sum version of \eqref{eq:stochastic_simple_bilevel}.
Suppose Assumptions\ref{assump1}-\ref{assump3} hold, and $f$ is non-convex. Define $\underline{f} = \min_{\vx \in \mathcal{Z}}f(\vx)$. If the parameters of SBCGF are selected as $\gamma = 1/\sqrt{T}$,  $S = q = \sqrt{n}$, and the cutting plane parameter is $K_t= 4D(L_{l} \sqrt{\log(12T/\delta)}+L_{g}D\sqrt{\log(12T/\delta)})/\sqrt{T}$, then, after $T$ iterations, there exists $t^*\in\{0,1,\hdots,T-1\}$ such that
\vspace{-0.5mm}
\begin{equation*}
    g(\vx_{t^{*}}) - g^{*} \leq \frac{C_{7}\zeta'+L_{g}D^{2}}{T^{1/2}}  + \frac{\epsilon_{g}}{2},  \quad
    \mathcal{G}(\vx_{t^{*}}) \leq \frac{f(\vx_{0}) \!-\! \underline{f} + C_{8}\zeta'}{T^{1/2}} 
\end{equation*}
\vspace{-0.5mm}
with probability at least $1-\delta$, for some absolute constants $C_{7}$ and $C_{8}$, and $\zeta'= \sqrt{\log({12T}/{\delta})}$. Note that $\mathcal{G}(\cdot)$ is Frank-Wolfe gap defined in Definition 2.1.

\end{theorem}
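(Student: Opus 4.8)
The plan is to run the same two-track argument as in Theorem~\ref{thm:s&nc} (one track for the lower level, one for the Frank--Wolfe gap of the upper level), but with the constant SPIDER step size $\gamma = 1/\sqrt{T}$ in place of the diminishing STORM schedule and with Lemma~\ref{lem:HPB4SPIDER} in place of Lemma~\ref{lem:HPB4STORM}. First I would set up the good event: applying Lemma~\ref{lem:HPB4SPIDER} with failure probability $\delta/T$ at each iteration and taking a union bound, with probability at least $1-\delta$ the estimator errors satisfy $\|\widehat{\nabla f}_t - \nabla f(\vx_t)\| \le \tilde K_{1,t}$, $\|\widehat{\nabla g}_t - \nabla g(\vx_t)\| \le K_{1,t}$, and $|\hat g_t - g(\vx_t)| \le K_{0,t}$ \emph{simultaneously for all} $0 \le t \le T$, where $K_{1,t} = 4L_gD\gamma\sqrt{\log(12T/\delta)}$, $K_{0,t} = 4L_lD\gamma\sqrt{\log(12T/\delta)}$, and $\tilde K_{1,t}$ is the analogous bound for $\nabla f$; this is exactly the source of the $\sqrt{\log(12T/\delta)}$ factor inside $K_t$. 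On this event, the choice $K_t = K_{0,t} + D K_{1,t}$ together with Lemma~\ref{lem:random_set} gives $\mathcal{X}_g^* \subseteq \hat{\mathcal{X}}_t$ for every $t$. All subsequent estimates are deterministic on this event.

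For the lower level, I would use $L_g$-smoothness of $g$ along $\vx_{t+1} - \vx_t = \gamma(\vs_t - \vx_t)$, giving $g(\vx_{t+1}) \le g(\vx_t) + \gamma\langle \nabla g(\vx_t), \vs_t - \vx_t\rangle + \tfrac{L_g\gamma^2D^2}{2}$. Splitting $\langle \nabla g(\vx_t), \vs_t - \vx_t\rangle = \langle \widehat{\nabla g}_t, \vs_t - \vx_t\rangle + \langle \nabla g(\vx_t) - \widehat{\nabla g}_t, \vs_t - \vx_t\rangle$, bounding the first summand by $g(\vx_0) - \hat g_t + K_t$ (because $\vs_t \in \hat{\mathcal{X}}_t$), the second by $D K_{1,t}$ (Cauchy--Schwarz and $\|\vs_t - \vx_t\| \le D$), and using $\hat g_t \ge g(\vx_t) - K_{0,t}$, I obtain $\langle \nabla g(\vx_t), \vs_t - \vx_t\rangle \le g(\vx_0) - g(\vx_t) + 2K_t$. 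This yields the recursion $g(\vx_{t+1}) - g(\vx_0) \le (1-\gamma)\big(g(\vx_t) - g(\vx_0)\big) + 2\gamma K_t + \tfrac{L_g\gamma^2D^2}{2}$, and unrolling with $\sum_{j\ge 0}(1-\gamma)^j \le 1/\gamma$ gives $g(\vx_t) - g(\vx_0) \le 2K_t + \tfrac{L_g\gamma D^2}{2}$ for every $t$. Adding $g(\vx_0) - g^* \le \epsilon_g/2$ and plugging in $\gamma = 1/\sqrt{T}$ and the stated $K_t$ produces $g(\vx_t) - g^* \le \tfrac{C_7\zeta' + L_gD^2}{\sqrt{T}} + \tfrac{\epsilon_g}{2}$ for all $t$, hence in particular at the $t^*$ chosen below.

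For the upper level, I would use $L_f$-smoothness: $f(\vx_{t+1}) \le f(\vx_t) + \gamma\langle \nabla f(\vx_t), \vs_t - \vx_t\rangle + \tfrac{L_f\gamma^2D^2}{2}$. Since $\vs_t$ minimizes $\langle \widehat{\nabla f}_t, \cdot\rangle$ over $\hat{\mathcal{X}}_t \supseteq \mathcal{X}_g^*$, for every $\vs \in \mathcal{X}_g^*$ we get $\langle \widehat{\nabla f}_t, \vs_t - \vx_t\rangle \le \langle \widehat{\nabla f}_t, \vs - \vx_t\rangle \le \langle \nabla f(\vx_t), \vs - \vx_t\rangle + D\tilde K_{1,t}$; minimizing over $\vs \in \mathcal{X}_g^*$ and recalling $\mathcal{G}(\vx_t) = -\min_{\vs\in\mathcal{X}_g^*}\langle \nabla f(\vx_t), \vs - \vx_t\rangle$ gives $\langle \widehat{\nabla f}_t, \vs_t - \vx_t\rangle \le -\mathcal{G}(\vx_t) + D\tilde K_{1,t}$, and adding $\langle \nabla f(\vx_t) - \widehat{\nabla f}_t, \vs_t - \vx_t\rangle \le D\tilde K_{1,t}$ yields $\gamma\mathcal{G}(\vx_t) \le f(\vx_t) - f(\vx_{t+1}) + 2\gamma D\tilde K_{1,t} + \tfrac{L_f\gamma^2D^2}{2}$. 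Summing over $t = 0,\dots,T-1$, using $f(\vx_T) \ge \underline f$, and dividing by $\gamma T$ gives $\min_{0\le t\le T-1}\mathcal{G}(\vx_t) \le \tfrac{1}{T}\sum_{t=0}^{T-1}\mathcal{G}(\vx_t) \le \tfrac{f(\vx_0) - \underline f}{\gamma T} + 2D\tilde K_{1,t} + \tfrac{L_f\gamma D^2}{2}$; with $\gamma = 1/\sqrt{T}$ and $\tilde K_{1,t} = \Theta(\zeta'/\sqrt{T})$ this is $\le \tfrac{f(\vx_0) - \underline f + C_8\zeta'}{\sqrt{T}}$. Taking $t^*$ to be the index achieving the minimum (which automatically satisfies the lower-level bound, valid at every iteration) completes the proof.

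The routine part is the pair of descent inequalities above. The delicate point, and the main obstacle, is the probabilistic bookkeeping: one must package a \emph{single} event of probability $1-\delta$ that simultaneously controls all three SPIDER estimators at \emph{all} $T$ iterations and, through Lemma~\ref{lem:random_set}, keeps $\mathcal{X}_g^* \subseteq \hat{\mathcal{X}}_t$ throughout. A related subtlety, specific to the non-convex case, is that $\vs_t$ is chosen to decrease $f$ rather than $g$, so lower-level feasibility is preserved \emph{only} because $\vs_t$ lies in the cutting-plane set; one then has to check that the per-step slack $2\gamma K_t$ accumulates to merely $O(K_t)$ under the constant step size instead of growing with $T$, which is precisely why $K_t$ must be taken of order $\gamma$.
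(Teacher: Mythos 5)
Your proposal is correct and follows essentially the same route as the paper: the same high-probability event (union bound over $T$ iterations of Lemma~\ref{lem:HPB4SPIDER}, feeding Lemma~\ref{lem:random_set}), the same one-step descent inequalities for $g$ and for the FW gap (which the paper packages as Lemma~\ref{lem:one_step_improvement} and you re-derive inline), the same geometric unrolling with $\sum_j(1-\gamma)^j\le 1/\gamma$ for the lower level, and the same sum-and-average argument with $t^*=\argmin_t\mathcal{G}(\vx_t)$ for the upper level. The only deviations are immaterial constant factors (e.g.\ your $2D\tilde K_{1,t}$ versus the paper's single $D\|\nabla f(\vx_t)-\widehat{\nabla f}_t\|$ term), which are absorbed into $C_7$ and $C_8$.
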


As a corollary of Theorem \ref{thm:spider&nc}, the number of stochastic oracle queries is $\mathcal{O}(\log(1/\delta\epsilon)\sqrt{n}/\epsilon^2)$, where $\epsilon = \min\{\epsilon_{f},\epsilon_{g}\}$, which matches the state-of-the-art single-level result $\mathcal{O}(\sqrt{n}/\epsilon^2)$ in \cite{yurtsever2019}. SBCGF also improves the number of linear minimization oracle queries of SBCGI from $\mathcal{O}(1/\epsilon^2)$ to $\mathcal{O}(1/\epsilon)$ for convex upper-level and from $\mathcal{O}(1/\epsilon^3)$ to $\mathcal{O}(1/\epsilon^2)$ for non-convex upper-level. 
\vspace{-2mm}
\section{Numerical experiments}\label{sec:experiments}
\vspace{-1mm}
In this section, we test our methods on two different stochastic bilevel optimization problems with real and synthetic datasets and compare them with other existing stochastic methods in \cite{jalilzadeh2022stochastic} and \cite{gong2021bi}.

\noindent \textbf{Over-parameterized regression.}
We consider the bilevel problem corresponding to  sparse linear regression introduced in \eqref{eq:regression}. We apply the Wikipedia Math Essential dataset \cite{rozemberczki2021pytorch} which composes of a data matrix $\mathbf{A} \in \mathbb{R}^{n \times d}$ with $n=1068$ samples and $d=730$ features and an output vector $\vb \in \mathbb{R}^n$.  To ensure the problem is over-parameterized, we assign $1/3$ of the dataset as the training set $\left(\mathbf{A}_{\mathrm{tr}}, \mathbf{b}_{\mathrm{tr}}\right)$, $ 1/3$ as the validation set $\left(\mathbf{A}_{\text {val }}, \mathbf{b}_{\text {val }}\right)$ and the remaining $1/3$ as the test set $\left(\mathbf{A}_{\text {test }}, \mathbf{b}_{\text {test }}\right)$. 
For both upper- and lower-level loss functions we use the least squared loss, 
and we set $\lambda = 10$. 
We compare the performance of our methods with the aR-IP-SeG method by \cite{jalilzadeh2022stochastic} and the stochastic version of DBGD introduced by \cite{gong2021bi}. We employ CVX \cite{grant2008graph,grant2014cvx} to solve the lower-level problem and the reformulation of the bilevel problem to obtain  $g^*$ and $f^*$, respectively. We also include the additional cost of finding  $\vx_{0}$ in SBCGI and SBCGF in our comparisons.

In Figure 1(a)(b), we observe that SBCGF maintains a smaller lower-level gap than other methods and converges faster than the rest in terms of upper-level error. SBCGI has the second-best performance in terms of lower- and upper-level gaps, while aR-IP-SeG performs poorly in terms of both lower- and upper-level objectives. The performance of DBGD-sto for the upper-level objective is well, however, it underperforms in terms of lower-level error. In Figure 1(c), SBCGF, SBCGI, and DBGD-sto achieve almost equally small test errors, while aR-IP-SeG  fails to achieve a low test error. Note that after the initial stage, SBCGI increases slightly in terms of all the performance criteria, because SBCGI \eqref{alg:STORM} only takes one sample per iteration and uses a decreasing step-size 
while SBCGF takes $\sqrt{n}$ samples per iteration and uses a small constant stepsize, demonstrating a more robust performance.

\begin{figure}
  \centering
    \vspace{-2mm}
  \subfloat[Lower-level gap]
{\includegraphics[width=0.3\textwidth]{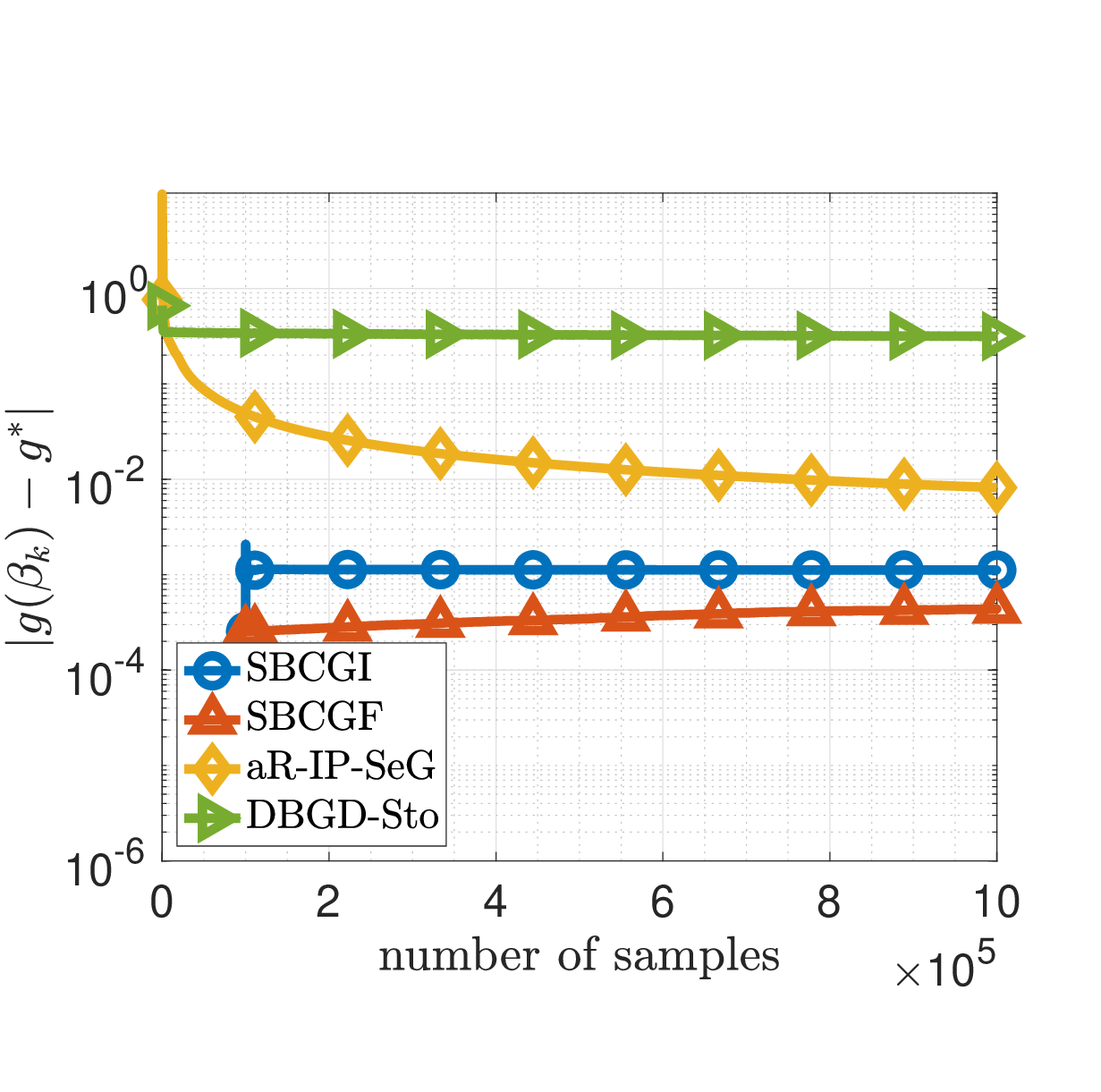}}
  \vspace{0mm}
  \subfloat[Upper-level gap]{\includegraphics[width=0.3\textwidth]{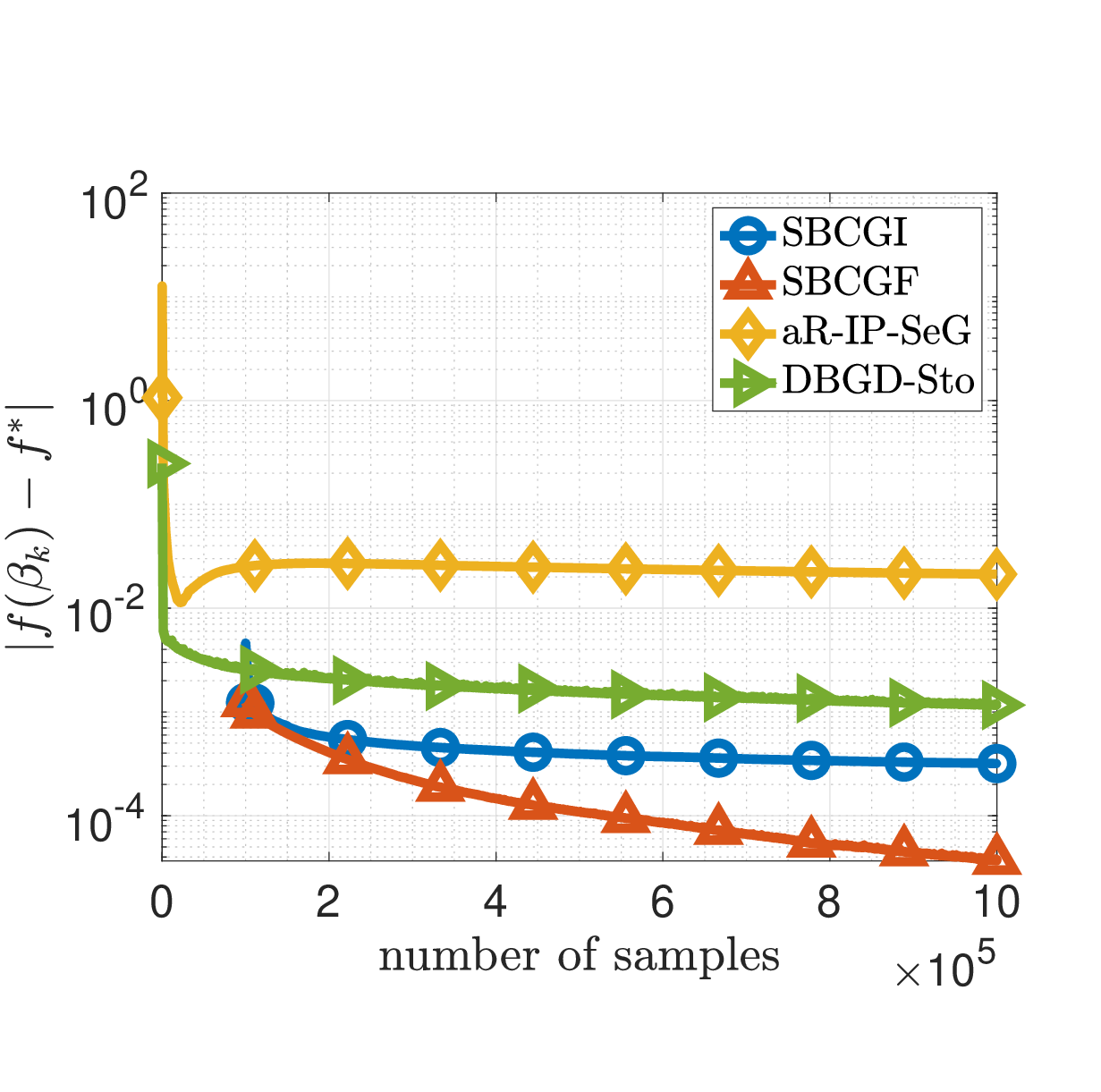}}
  \vspace{0mm}
  \subfloat[Test error]
{\includegraphics[width=0.3\textwidth]{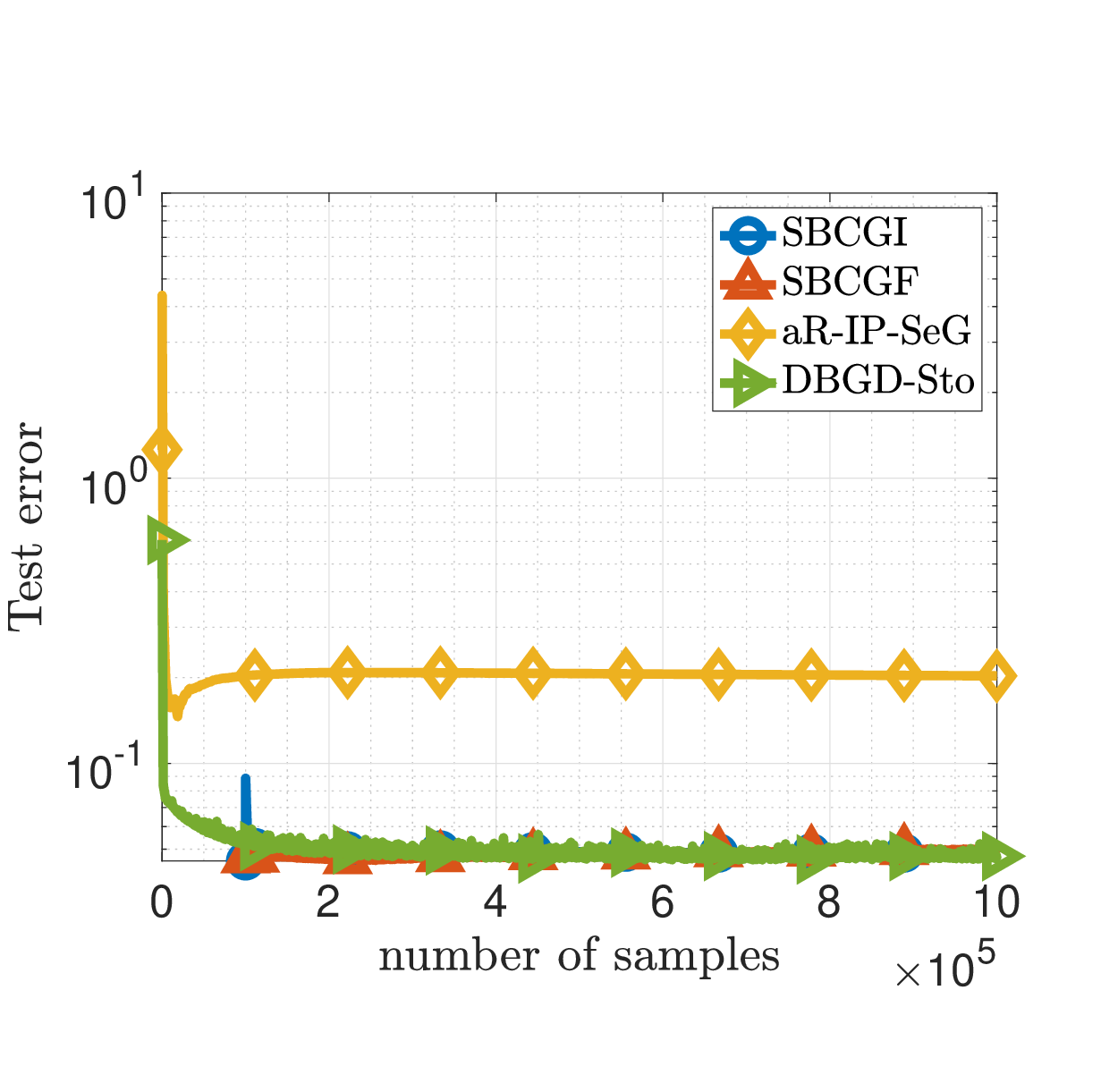}}
  \caption{Comparison of SBCGI, SBCGF, aR-IP-SeG, and DBGD-Sto for solving Problem~\eqref{eq:regression}}
  \label{fig:regression}
\end{figure}
\begin{figure}
  \centering
    \vspace{-8mm}
  \subfloat[Lower-level gap]{\includegraphics[width=0.3\textwidth]{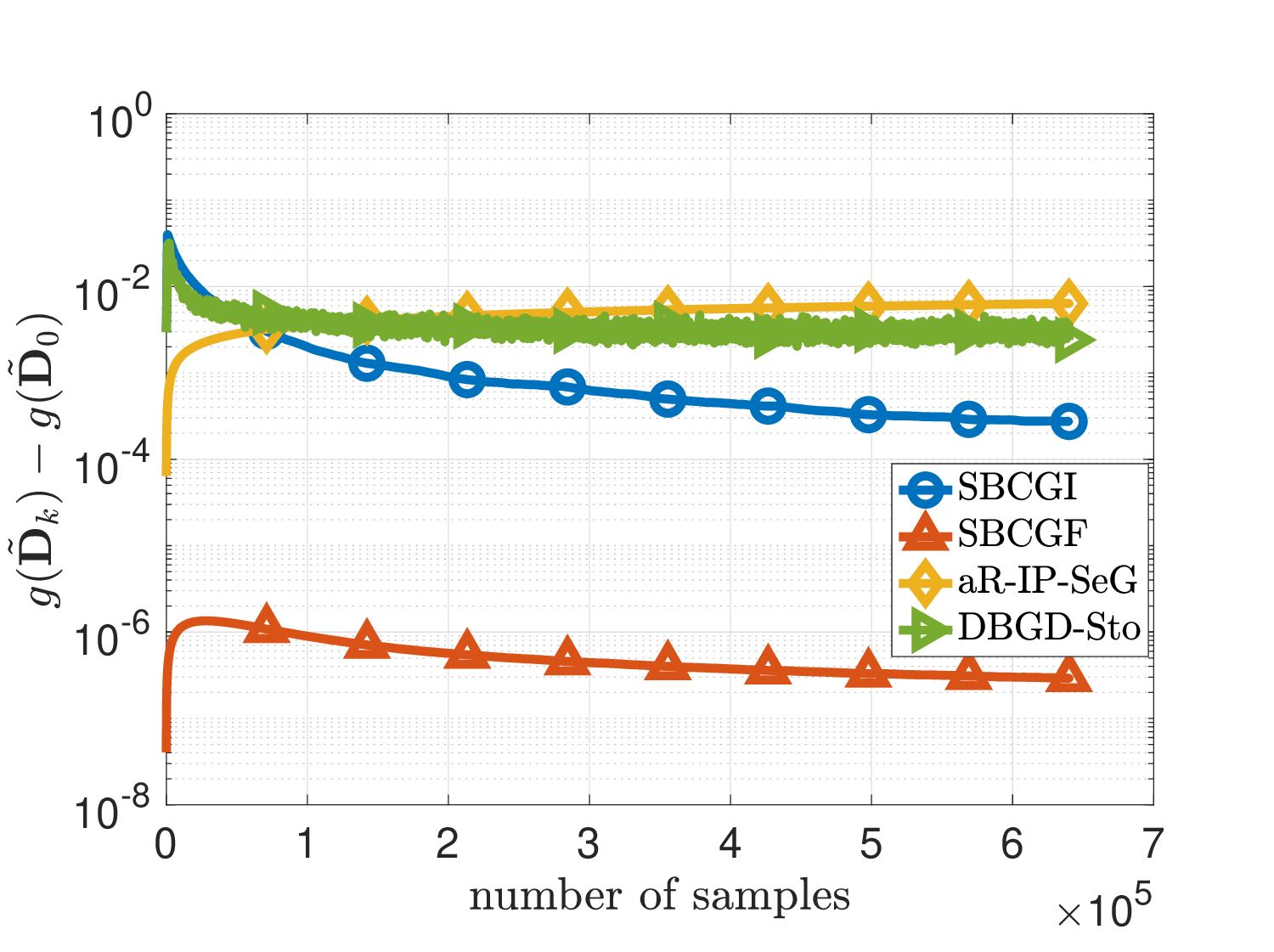}}
  \subfloat[Upper-level gap]{\includegraphics[width=0.3\textwidth]{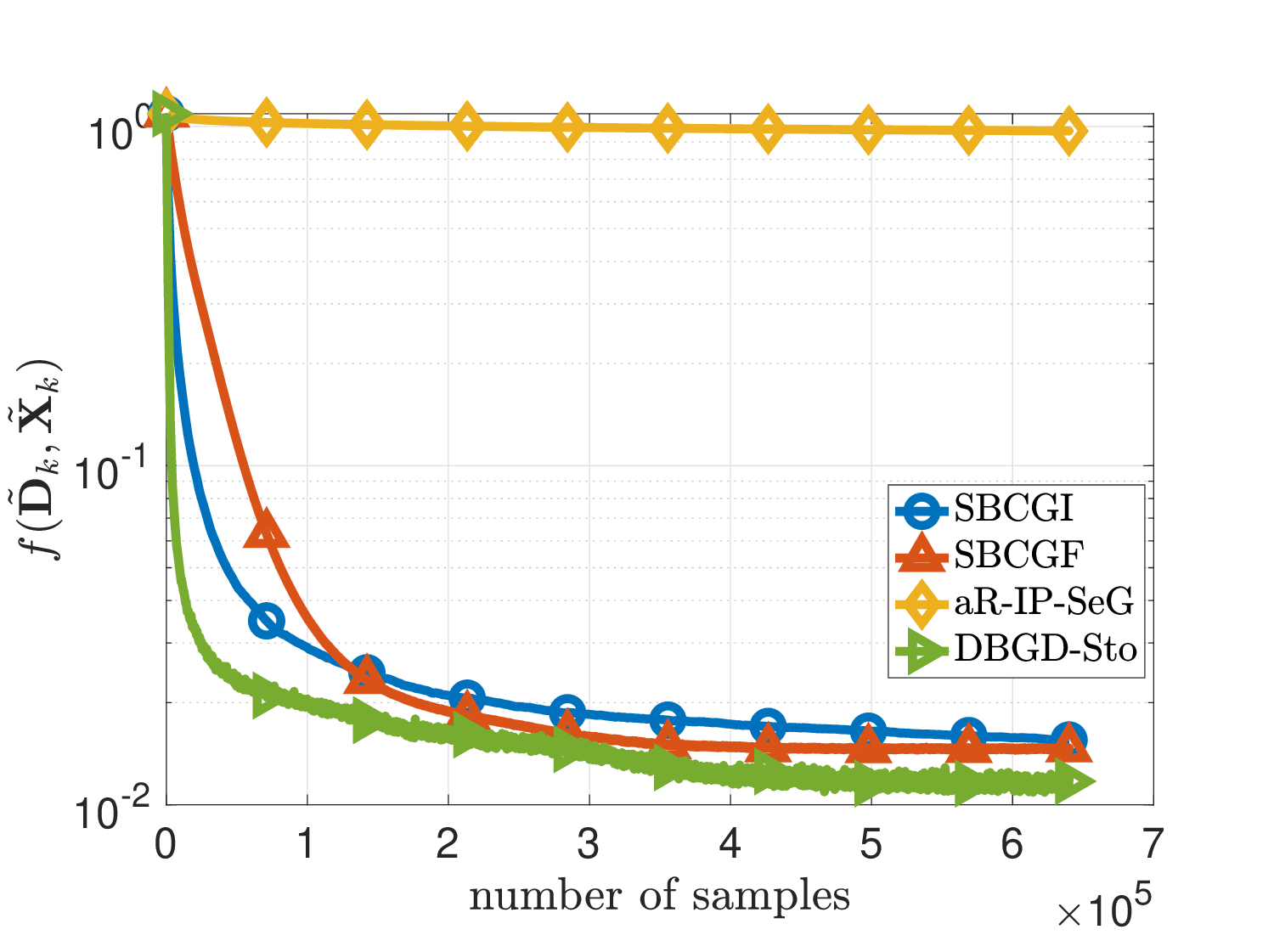}}
  \subfloat[Recovery rate]
{\includegraphics[width=0.3\textwidth]{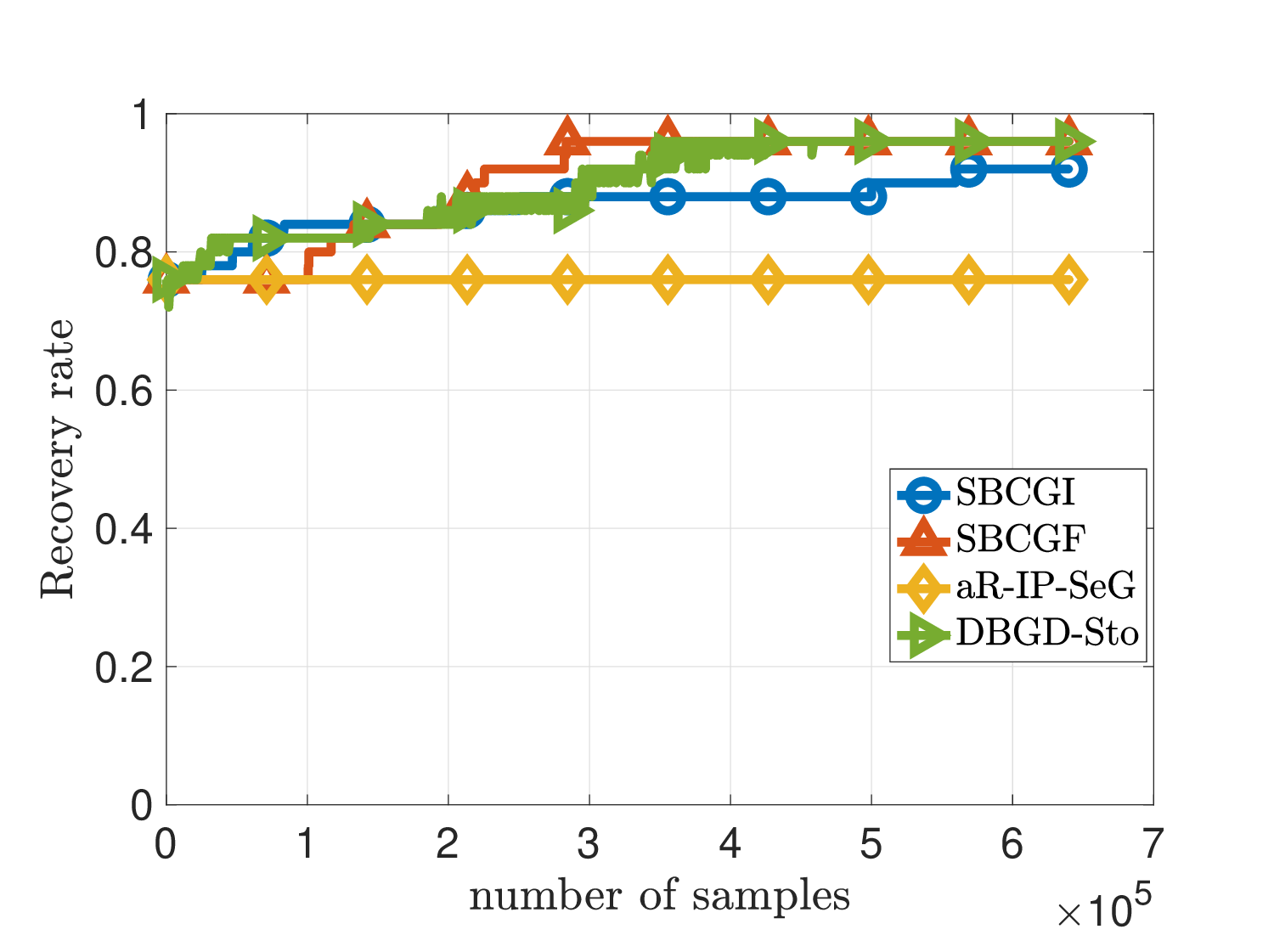}}
  \caption{
 Comparison of SBCGI, SBCGF, aR-IP-SeG, and DBGD-Sto for solving Problem~\eqref{eq:Dictionary_learning}.}
  \label{fig:dictionary}
  \vspace{-2mm}
\end{figure}

 

\noindent \textbf{Dictionary learning.} 
To test our methods on problems with non-convex upper-level we consider problem \eqref{eq:Dictionary_learning} on a synthetic dataset with a similar setup to \cite{jiangconditional}. 
We first construct the true dictionary $\tilde{\mathbf{D}}^* \in \mathbb{R}^{25 \times 50}$ comprising of 50 basis vectors in $\mathbb{R}^{25}$. All entries of these basis vectors are drawn from the standard Gaussian distribution and then normalized to have unit $\ell_2$-norm. We also generate two more dictionaries $\mathbf{D}^*$ and $\mathbf{D}^{\prime *}$ consisting of 40 and 20 basis vectors in $\tilde{\mathbf{D}}^*$, respectively (thus they share at least 10 bases). These two datasets $\mathbf{A}=\left\{\mathbf{a}_1, \ldots, \mathbf{a}_{250}\right\}$ and $\mathbf{A}^{\prime}=\left\{\mathbf{a}_1^{\prime}, \ldots, \mathbf{a}_{250}^{\prime}\right\}$ are constructed as $
\mathbf{a}_i =\mathbf{D}^* \mathbf{x}_i+\mathbf{n}_i, $ for $ i=1, \ldots, 250 $, and $
\mathbf{a}_k^{\prime} =\mathbf{D}^{\prime *} \mathbf{x}_k^{\prime}+\mathbf{n}_k^{\prime},$ for  $k=1, \ldots, 250$, 
where $\{\mathbf{x}_i\}_{i=1}^{250},\{\mathbf{x}_k^{\prime}\}_{k=1}^{250}$ are coefficient vectors and $\{\mathbf{n}_i\}_{i=1}^{250},\{\mathbf{n}_k^{\prime}\}_{k=1}^{250}$ are random Gaussian noises. As neither $\mathbf{A}$ nor $\mathbf{A}^{\prime}$ includes all the elements of $\tilde{\mathbf{D}}^*$, it is important to renew our dictionary by using the new dataset~$\mathbf{A}^{\prime}$ while maintaining the knowledge from the old dataset~$\mathbf{A}$.

In our experiment, we initially solve the standard dictionary learning problem employing dataset $\mathbf{A}$, achieving the initial dictionary $\hat{\mathbf{D}}$ and coefficient vectors $\{\hat{\mathbf{x}}\}_{i=1}^{250}$. We define the lower-level objective as the reconstruction error on $\mathbf{A}$ using $\{\hat{\mathbf{x}}\}_{i=1}^{250}$, and the upper-level objective as the error on new dataset $\mathbf{A}^{\prime}$. We compare our algorithms with aR-IP-SeG and DBGD (stochastic version), measuring performance with the recovery rate of true basis vectors. Note that a basis vector $\tilde{\mathbf{d}}_i^*$ in $\tilde{\mathbf{D}}^*$ is considered as successfully recovered if there exists $\tilde{\mathbf{d}}_j$ in $\tilde{\mathbf{D}}$ such that $|\langle\tilde{\mathbf{d}}_i^*, \tilde{\mathbf{d}}_j\rangle|>0.9$ (for more details of the experiment setup see Appendix \ref{sec:exp-details}).
In Figure 2(a), we observe SBCGF converges faster than any other method regarding the lower-level objective. While SBCGI has the second-best performance in terms of the lower-level gap, aR-IP-SeG and DBGD-sto perform poorly compared with SBCGI and SBCGF. In Figures 2(b) and (c), we see that SBCGI, SBCGF, and DBGD-sto achieve good results in terms of the upper-level objective and the recovery rate. However, aR-IP-SeG still performs poorly in terms of both criteria, which matches the theoretical results in Table \ref{tab:bilevel}.

\section*{Acknowledgements}
The research of J. Cao, R. Jiang and A. Mokhtari is supported in part by NSF Grants 2127697, 2019844, and  2112471,  ARO  Grant  W911NF2110226,  the  Machine  Learning  Lab  (MLL)  at  UT  Austin, and the Wireless Networking and Communications Group (WNCG) Industrial Affiliates Program. The research of N. Abolfazli and E. Yazdandoost Hamedani is supported by NSF Grant 2127696. 

\printbibliography

\newpage
\appendix
\section*{Appendix}

\section{Additional Motivating Examples}

The bilevel optimization problem in \eqref{eq:stochastic_simple_bilevel} provides a versatile framework that covers a broad class of optimization problems. In addition to the motivating examples provided in the main body of the paper, here we also provide a generic example of stochastic convex constrained optimization that can be formulated as \eqref{eq:stochastic_simple_bilevel}. We further present a more general form of the examples covered in the main body. 

{\noindent \textit{Generic Example: Stochastic convex optimization with many conic constraints.} 
Consider the following convex optimization problem 
\begin{equation*}
\min_{\vx\in\mathbb{R}^d}\mathbb{E}[\tilde f(\vx,\theta)]\qquad \hbox{s.t.}\quad  h(\vx,\xi)\in -\mathcal K,~\forall \xi\in\Omega,     
\end{equation*} 
where $\mathcal K\subseteq \mathbb{R}^d$ is a closed convex cone. This problem can be formulated as a special case of \eqref{eq:stochastic_simple_bilevel} by letting $\tilde g(\vx,\xi)=\frac{1}{2}d_{\mathcal{-K}}^2(h(\vx,\xi))$ where $d_{-\mathcal{K}}(\cdot)\triangleq \|\cdot-\mathcal{P}_{-\mathcal K}(\cdot)\|$ denotes the distance function and $\mathcal{P}_{-\mathcal K}(\cdot)$ denotes the projection map. Our proposed framework provides an efficient method for solving this class of problems when the projections onto $\mathcal K$ can be computed efficiently, while the projection onto the preimage $h^{-1}(-\mathcal{K},\xi)$ is not
practical, e.g., when $\mathcal K$ is the positive semidefinite cone, computing a projection onto the preimage set requires solving a nonlinear SDP.}

\subsection{Lexicographic optimization}
\textit{Example 1 (over-parameterized regression)} can be generalized as a broader class of problem, which is known as lexicographic optimization \cite{gong2021bi} and uses the secondary loss to improve generalization. The problem can be formulated as the following stochastic simple bilevel optimization problem,
\begin{equation}
    \min_{\boldsymbol{\beta} \in \mathbb{R}^{d}} \mathcal{L}(\boldsymbol{\beta}) \quad \text{s.t.} \quad \boldsymbol{\beta} \in \argmin_{\theta \in {\mathcal{Z}}}\ell_{\mathrm{tr}}(\theta)=\mathbb{E}_{\mathcal{D}_{tr}}[\ell (y,\hat{y}_{\theta}(\vx))]
\end{equation}
In general, the lower-level problem could have multiple optimal solutions and be very sensitive to small perturbations. To tackle the issue, we use a secondary criterion $\mathcal{L}(\cdot)$ to select some of the optimal solutions with our desired properties. For instance, we can find the optimal solutions with minimal $\ell_{2}$-norm by letting $\mathcal{L}(\boldsymbol{\beta}) = \|\boldsymbol{\beta}\|^{2}$, which is also known as \textit{Lexicographic $\ell_{2}$ Regularization}.

\subsection{Lifelong learning}
\textit{Example 2 (dictionary learning)} is an instance of a popular framework known as lifelong learning, which can be formulated as follows,
\begin{equation}
\min _{\boldsymbol{\beta}} \frac{1}{n^{\prime}} \sum_{i=1}^{n^{\prime}} \ell\left(\left\langle\mathbf{x}_i^{\prime}, \boldsymbol{\beta}\right\rangle, y_i^{\prime}\right) \quad \text { s.t. } \quad \sum_{(\mathbf{x}_i, y_i) \in \mathcal{M}} \ell(\langle\mathbf{x}_i, \boldsymbol{\beta}\rangle, y_i) \leq \sum_{(\mathbf{x}_i, y_i) \in \mathcal{M}} \ell(\langle\mathbf{x}_i, \boldsymbol{\beta}^{(t-1)}\rangle, y_i)
\end{equation}
In this problem, the objective is the training loss on the current tasks $\mathcal{D}_t=\left\{\left(\mathbf{x}_i^{\prime}, y_i^{\prime}\right)\right\}_{i=1}^{n^{\prime}}$. While the constraint enforces that the model parameterized by $\boldsymbol{\beta}$ performs no worse than the previous one on the episodic memory $\mathcal{M}$ (i.e., data samples from all the past tasks).

In the paper, we discuss a variant of the problem above, where we slightly change the constraint and ensure that the current model also minimizes the error on the past tasks. It can be formulated as the following finite-sum/stochastic simple bilevel optimization problem \cite{jiangconditional}, 
\begin{equation}
\min _{\boldsymbol{\beta}} \frac{1}{n^{\prime}} \sum_{i=1}^{n^{\prime}} \ell\left(\left\langle\mathbf{x}_i^{\prime}, \boldsymbol{\beta}\right\rangle, y_i^{\prime}\right) \quad \text { s.t. } \quad \boldsymbol{\beta} \in \underset{\mathbf{z}}{\operatorname{argmin}} \sum_{\left(\mathbf{x}_i, y_i\right) \in \mathcal{M}} \ell\left(\left\langle\mathbf{x}_i, \mathbf{z}\right\rangle, y_i\right) .
\end{equation}

\section{Supporting lemmas}

\subsection{Proof of Lemma~\ref{lem:HPB4STORM}}
Before we proceed to the proof for Lemma~\ref{lem:HPB4STORM}, we present the following technical lemma, which gives us an upper bound for a complex term appearing in the following analysis.


\begin{lemma} \label{lem:support}
Define $\rho_{t} = 1/(t+1)^{\omega}$ where $\omega \in (0, 1]$ and $t \geq 1$. For all $t\geq 2$, let $\{s_{t}\}$ be a sequence of real numbers given by 
\begin{equation*}
    s_{t} = \sum_{\tau = 2}^{t}\biggl(\rho_{\tau}\prod_{k=\tau}^{t}(1-\rho_{k})\biggr)^{2}.
\end{equation*}
Then it holds that 
\begin{equation}\label{eq:bound_on_st}
    s_{t} \leq \frac{1}{(t+1)^{\omega}}.
\end{equation}
\end{lemma}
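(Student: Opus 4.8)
The plan is to prove the bound on $s_t$ by induction on $t$, exploiting the recursive structure of the sum. First I would observe that $s_t$ satisfies the recursion
\begin{equation*}
    s_t = (1-\rho_t)^2 s_{t-1} + \rho_t^2,
\end{equation*}
which follows by peeling off the $\tau = t$ term from the sum (for which the product $\prod_{k=t}^t(1-\rho_k) = 1-\rho_t$, wait — actually the $\tau=t$ term is $\rho_t^2$ since the product is empty or equals $1-\rho_t$; I need to check the indexing convention, but in any case the last term contributes $\rho_t^2$ and every earlier term picks up a factor $(1-\rho_t)^2$). With $\rho_t = (t+1)^{-\omega}$, the claim $s_t \le (t+1)^{-\omega}$ becomes the induction step
\begin{equation*}
    (1-\rho_t)^2\, t^{-\omega} + \rho_t^2 \;\overset{?}{\le}\; (t+1)^{-\omega}.
\end{equation*}

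Next I would verify the base case (around $t=2$, or wherever the sum first becomes nonempty) by a direct computation, which is routine since there are only one or two terms. For the inductive step, I would substitute $\rho_t = (t+1)^{-\omega}$ and reduce the desired inequality to showing
\begin{equation*}
    \left(1 - (t+1)^{-\omega}\right)^2 t^{-\omega} + (t+1)^{-2\omega} \le (t+1)^{-\omega}.
\end{equation*}
Multiplying through by $(t+1)^{\omega}$ and rearranging, this is equivalent to
\begin{equation*}
    \left(1 - (t+1)^{-\omega}\right)^2 \left(\frac{t+1}{t}\right)^{\omega} + (t+1)^{-\omega} \le 1.
\end{equation*}
The key analytic facts I would invoke are: (i) $(1-x)^2 \le 1 - x$ for $x \in [0,1]$, applied with $x = (t+1)^{-\omega}$, so the first term is at most $\left(1-(t+1)^{-\omega}\right)\left(\frac{t+1}{t}\right)^{\omega}$; and (ii) a bound of the form $\left(\frac{t+1}{t}\right)^{\omega} \le 1 + \frac{\omega}{t} \le 1 + \frac{1}{t}$ for $\omega \in (0,1]$ (using concavity of $x \mapsto x^\omega$ or Bernoulli-type estimates). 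Combining these, it suffices to check
\begin{equation*}
    \left(1-(t+1)^{-\omega}\right)\left(1 + \tfrac{1}{t}\right) + (t+1)^{-\omega} \le 1,
\end{equation*}
which simplifies to $\frac{1}{t} \le \left(1 + \frac{1}{t}\right)(t+1)^{-\omega}$, i.e. $(t+1)^{1-\omega} \le t+1$ — true for $\omega \in (0,1]$.

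The main obstacle I anticipate is getting the telescoping/recursion bookkeeping exactly right — in particular the precise range of the product index and whether the recursion constant is $(1-\rho_t)^2$ cleanly, and making sure the chain of elementary inequalities in the induction step is tight enough to close (the estimate $(1-x)^2 \le 1-x$ leaves some slack, which is what makes the argument go through, but one has to confirm the $\left(\frac{t+1}{t}\right)^\omega$ factor doesn't eat that slack). An alternative, if the induction proves fussy, would be to bound each summand directly: $\rho_\tau \prod_{k=\tau}^t (1-\rho_k) \le \rho_\tau \exp\!\left(-\sum_{k=\tau}^t \rho_k\right)$ and then compare $\sum_{k=\tau}^t (k+1)^{-\omega}$ with an integral, but this route tends to produce extra constants and is messier than the clean induction, so I would only fall back on it if needed.
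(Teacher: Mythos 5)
Your overall strategy---the recursion $s_t=(1-\rho_t)^2(s_{t-1}+\rho_t^2)$ (your version $s_t\le(1-\rho_t)^2 s_{t-1}+\rho_t^2$ is a harmless loosening) followed by induction---is exactly the paper's, and the target inductive inequality is correct. The problem is in how you close the inductive step. Writing $x=(t+1)^{-\omega}$ and $r=\left(\frac{t+1}{t}\right)^{\omega}$, you need $(1-x)^2 r + x\le 1$. At $\omega=1$ this holds \emph{with equality}: $(1-x)^2 r=\frac{t^2}{(t+1)^2}\cdot\frac{t+1}{t}=\frac{t}{t+1}=1-x$. So there is no slack to give away, and your step (i), replacing $(1-x)^2$ by the strictly larger $1-x$, already overshoots: $(1-x)r+x=1+x>1$ when $\omega=1$. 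The reason your derivation nevertheless appears to close is an algebra slip at the end: $(1-x)\left(1+\frac{1}{t}\right)+x=1+\frac{1}{t}-\frac{x}{t}$, so the inequality $(1-x)\left(1+\frac{1}{t}\right)+x\le1$ is equivalent to $1\le x$, which is false for every $t\ge1$ and $\omega>0$ --- not to $\frac{1}{t}\le\left(1+\frac{1}{t}\right)x$ as you wrote (you dropped the trailing $+x$ when expanding).

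The repair is to use a sharper bound in place of $(1-x)^2\le 1-x$. By subadditivity of $u\mapsto u^{\omega}$ for $\omega\in(0,1]$ one has $(t+1)^{\omega}\le t^{\omega}+1$, i.e.\ $1-x=\frac{(t+1)^{\omega}-1}{(t+1)^{\omega}}\le\left(\frac{t}{t+1}\right)^{\omega}=\frac{1}{r}$. Hence $(1-x)^2r=(1-x)\cdot\bigl[(1-x)r\bigr]\le(1-x)\cdot 1=1-x$, and therefore $(1-x)^2r+x\le1$, which closes the induction. This is essentially what the paper does: it applies $(T+2)^{\omega}-1\le(T+1)^{\omega}$ to one of the two factors of $(1-\rho_{T+1})^2$ while keeping the other exact. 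Your base case and the recursion bookkeeping are otherwise fine.
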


\begin{proof}
We prove the result by induction. 
For $t=2$, we can verify that 
\begin{equation*}
    s_{2} = \left(\frac{1}{3^{\omega}}\cdot\frac{3^{\omega}-1}{3^{\omega}}\right)^{2} \leq \frac{1}{3^{2\omega}} \leq \frac{1}{3^{\omega}}. 
\end{equation*}
Now we suppose that the inequality in \eqref{eq:bound_on_st} holds when $t = T$ for some $T \geq 2$, i.e.,
\begin{equation*}
    s_{T} = \sum_{\tau = 2}^{T}\biggl(\rho_{\tau}\prod_{k=\tau}^{T}(1-\rho_{k})\biggr)^{2} \leq \frac{1}{(t+1)^{\omega}}.
\end{equation*}
First note that the sequence $\{s_t\}$ satisfies the following recurrence relation:
\begin{equation*}
\begin{aligned}
    s_{T+1} = \sum_{\tau = 2}^{T+1}\biggl(\rho_{\tau}\prod_{k=\tau}^{T+1}(1-\rho_{k})\biggr)^{2} 
    & = (1-\rho_{T+1})^2\sum_{\tau = 2}^{T+1}\biggl(\rho_{\tau}\prod_{k=\tau}^{T}(1-\rho_{k})\biggr)^{2} \\
    &= (1-\rho_{T+1})^{2}\Biggl[\sum_{\tau = 2}^{T}\biggl(\rho_{\tau}\prod_{k=\tau}^{T}(1-\rho_{k})\biggr)^{2}+\rho_{T+1}^{2}\Biggr] \\
    &= (1-\rho_{T+1})^{2}(s_{T}+\rho_{T+1}^{2}).
\end{aligned}
\end{equation*}
Moreover, since $\omega\in(0,1]$, we have $(T+2)^\omega -1\leq (t+1)^{\omega}$. Therefore, we obtain 
\begin{equation*}
\begin{aligned}
    s_{T+1} 
    & \leq \left(\frac{(T+2)^{\omega}-1}{(T+2)^{\omega}}\right)^{2}\left(\frac{1}{(t+1)^{\omega}}+\frac{1}{(T+2)^{2\omega}}\right) \\
    &\leq \frac{((T+2)^\omega-1)(t+1)^{\omega}}{(T+2)^{2\omega}}\left(\frac{1}{(t+1)^{\omega}}+\frac{1}{(T+1)^{2\omega}}\right) \\
    & = \frac{(T+2)^\omega-1}{(T+2)^{2\omega}} \frac{(T+1)^{\omega}+1}{(T+1)^{\omega}} \\
    & = \frac{(T+2)^{\omega}(t+1)^{\omega}+(T+2)^{\omega}-1-(t+1)^{\omega}}{(T+2)^{2\omega}(t+1)^{\omega}}  \\
    &\leq \frac{(T+2)^{\omega}(t+1)^{\omega}}{(T+2)^{2\omega}(t+1)^{\omega}} = \frac{1}{(T+2)^{\omega}}. 
\end{aligned}
\end{equation*}
By induction, the inequality in \eqref{eq:bound_on_st} holds for all $t \geq 2$.
\end{proof}

Now we proceed to prove Lemma \ref{lem:HPB4STORM}.

\begin{proof}[Proof of Lemma \ref{lem:HPB4STORM}] We show the proof of part (i) here. The proof of part (ii) is very similar to part (i). The first step is to reformulate $\ve_{t} = \widehat{\nabla g}_{t} - \nabla g(\vx_{t})$ as the sum of a martingale difference sequence. For $t \geq 1$, by unrolling the reucurrence we have
\begin{equation}
\begin{aligned}
    \ve_{t} &= (1-\beta_{t})\ve_{t-1} +\beta_{t}(\nabla \tg(\vx_{t}, \xi_{t})-\nabla g(\vx_{t})) \\
    &\phantom{{}={}}+ (1-\beta_{t})(\nabla \tg(\vx_{t}, \xi_{t})-\nabla \tg(\vx_{t-1}, \xi_{t})-(\nabla g(\vx_{t})-\nabla g(\vx_{t-1}))\\
    &= \prod_{k=2}^t(1-\beta_{k})\ve_{1} + \sum_{\tau=2}^{t}\prod_{k=\tau}^{t}(1-\beta_{k})(\nabla \tg(\vx_{\tau}, \xi_{\tau})-\nabla \tg(\vx_{\tau-1}, \xi_{\tau})-(\nabla g(\vx_{\tau})-\nabla g(\vx_{\tau-1})) \\
    &+\sum_{\tau=2}^{t}\beta_{\tau}\prod_{k=\tau+1}^{t}(1-\beta_{k})(\nabla \tg(\vx_{\tau}, \xi_{\tau})-\nabla g(\vx_{\tau})).
\end{aligned}
\end{equation}
Thus, we can write $\ve_t$ as the sum $\ve_t = \sum_{\tau=1}^{t}\zeta_{\tau}$, where we define $\zeta_{1} = \prod_{k=2}^{t}(1-\beta_{k})\ve_{1}$ and
\begin{align}
    \zeta_{\tau} &= \prod_{k=\tau}^{t}(1-\beta_{k})(\nabla \tg(\vx_{\tau}, \xi_{\tau})-\nabla \tg(\vx_{\tau-1}, \xi_{\tau})-(\nabla g(\vx_{\tau})-\nabla g(\vx_{\tau-1})) \\
    &\phantom{{}={}}+\beta_{\tau}\prod_{k=\tau+1}^{t}(1-\beta_{k})(\nabla \tg(\vx_{\tau}, \xi_{\tau})-\nabla g(\vx_{\tau}))
\end{align}
for $\tau > 1$. Recall that $\ve_{1} = \nabla \tg(\vx_{t}, \zeta_{1}) - \nabla g(\vx_{1})$. We observe that $\mathbb{E}[\zeta_{ \tau}|\mathcal{F}_{\tau-1}] = 0$ where $\mathcal{F}_{\tau-1}$ is the $\sigma$-field generated by $\{\vx_{1}, \xi_{1}, \dots, \vx_{\tau-1}, \xi_{\tau-1}\}$. Therefore, $\{\zeta_{\tau}\}_{\tau=1}^{t}$ is a martingale difference sequence.

Next, we derive upper bounds of $\|\zeta_{ \tau}\|$. To begin with, we observe that for any $\tau = 1, 2, \dots, t$, 
\begin{equation}
    \prod_{k=\tau}^t(1-\beta_{k}) = \prod_{k=\tau}^t\biggl(1-\frac{1}{(k+1)^{\omega}}\biggr) = \prod_{k=\tau}^t \frac{(k+1)^{\omega}-1}{(k+1)^{\omega}} \leq \prod_{k=\tau}^t\frac{k^{\alpha}}{(k+1)^{\omega}} = \frac{\tau^{\omega}}{(t+1)^{\omega}},
\end{equation}
where we used the fact that $(k+1)^\omega-1 \leq k^\omega$ in the last inequality. 
By using the above inequality, we can bound $\|\zeta_{ 1}\|$ as follows:
\begin{equation*}
    \|\zeta_{1}\| =  \prod_{k=2}^{t}(1-\beta_{k}) \|\ve_{1}\| \leq \frac{2^{\omega}}{(t+1)^{\omega}}\|\nabla \tg(\vx_{1}, \xi_{1}) - \nabla g(\vx_{1})\| = \frac{2^{\omega}\sigma_1}{(t+1)^{\omega}} \frac{\|\nabla \tg(\vx_{1}, \xi_{1}) - \nabla g(\vx_{1})\|}{\sigma_1}. 
\end{equation*}
Define $c_1 = \frac{2^{\omega} \sigma_g}{(T+1)^{\omega}}$, then by Assumption~\ref{assum:lower_level}(ii) we have $\mathbb{E}[\exp{(\|\zeta_{1}\|^{2}/c_{1}^{2})}] \leq \exp{(1)}$. Moreover,
for $\tau > 1$, by triangle inequality, $\|\zeta_{\tau}\|$ can be bounded by
\begin{equation}
\begin{aligned}
    \|\zeta_{\tau}\| &\leq \prod_{k=\tau}^t(1-\beta_{k})(\|\nabla \tg(\vx_{\tau}, \xi_{\tau})-\nabla \tg(\vx_{\tau-1}, \xi_{\tau})\|+\|(\nabla g(\vx_{\tau})-\nabla g(\vx_{\tau-1})\|) \\
    &+\beta_{\tau}\prod_{k=\tau+1}^t(1-\beta_{k})\|\nabla \tg(\vx_{\tau}, \xi_{\tau})-\nabla g(\vx_{\tau})\| \\
    &\leq 2L_{g}\|\vx_{\tau}-\vx_{\tau-1}\|\prod_{k=\tau}^t(1-\beta_{k}) + \|\nabla \tg(\vx_{\tau}, \xi_{\tau})-\nabla g(\vx_{\tau})\|\beta_{\tau}\prod_{k=\tau+1}^{t}(1-\beta_{k})\\
    &= 2L_{g}\gamma_{\tau}D\prod_{k=\tau}^t(1-\beta_{k}) + \|\nabla \tg(\vx_{\tau}, \xi_{\tau})-\nabla g(\vx_{\tau})\|\beta_{\tau}\prod_{k=\tau+1}^{t}(1-\beta_{k})\\
    &\leq 2L_{g}D\beta_{\tau}\prod_{k=\tau}^{t}(1-\beta_{k}) +  \frac{3^{\omega}}{3^{\omega}-1}\|\nabla \tg(\vx_{\tau}, \xi_{\tau})-\nabla g(\vx_{\tau})\|\beta_{\tau}\prod_{k=\tau}^{t}(1-\beta_{k})\\
    &= \biggl(2L_{g}D + \frac{3^{\omega}}{3^{\omega}-1}\|\nabla \tg(\vx_{\tau}, \xi_{\tau})-\nabla g(\vx_{\tau})\|\biggr)\beta_{\tau}\prod_{k=\tau}^{t}(1-\beta_{k}) \\
    &= {\biggl(2L_{g}D + \frac{3^{\omega}\sigma_{g}}{3^{\omega}-1}\frac{\|\nabla \tg(\vx_{\tau}, \xi_{\tau})-\nabla g(\vx_{\tau})\|}{\sigma_{g}}\biggr)\beta_{\tau}\prod_{k=\tau}^{t}(1-\beta_{k})}
\end{aligned}
\end{equation}
Define $c_{\tau} = (2L_{g}D + \frac{3^{\omega}\sigma_{g}}{3^{\omega}-1})\beta_{\tau}\prod_{k=\tau}^{t}(1-\beta_{k})$. 
Note that if we have $\mathbb{E}[\exp(X_1^2/c_1^2)]\leq 1$ and $\mathbb{E}[\exp(X_2^2/c_2^2)] \leq 1$, then we have $\mathbb{E}[\exp((X_1+X_2)^2/(c_1+c_2)^2)] \leq 1$ \cite{vershynin2018high}. Thus, we have $\mathbb{E}[\exp{(\|\zeta_{ \tau}\|^{2}/c_{\tau}^{2})}] \leq \exp{(1)}$ for all $1\leq \tau \leq t$.
Hence by proposition \ref{prop:stochastic}, with probability $1-\delta^{'}$
\begin{equation}\label{eq:bound_on_et_norm}
    \|\ve_{t}\| \leq c\cdot \sqrt{\sum_{\tau=1}^{t} c_{\tau}^2 \log \frac{2 d}{\delta^{'}}}
\end{equation}
where $c$ is an absolute constant, $d$ is the number of dimension, and $\sum_{\tau=1}^{T} c_{\tau}^{2}$ can be bounded by Lemma \ref{lem:support} as follows,
\begin{equation}\label{eq:bound_on_sum_c_t}
\begin{aligned}
    \sum_{\tau=1}^{t} c_{\tau}^{2} = c_{ 1}^{2}+\sum_{\tau=2}^{t} c_{\tau}^{2} &= \frac{2^{2\omega}\sigma_{g}^{2}}{(T+1)^{2\omega}}+(2L_{g}D + \frac{3^{\omega}}{3^{\omega}-1}\sigma_{g})^{2}\sum_{\tau=2}^{T}(\beta_{\tau}\prod_{k=\tau}^{T}(1-\beta_{k}))^{2}\\
    &\leq \frac{2^{2\omega}\sigma_{g}^{2}}{(T+1)^{2\omega}}+\frac{(2L_{g}D + \frac{3^{\omega}}{3^{\omega}-1}\sigma_{g})^{2}}{(t+1)^{\omega}} \\
    &\leq \frac{((\sqrt{2})^{\omega}\sigma_{g})^{2}}{(t+1)^{\omega}}+\frac{(2L_{g}D + \frac{3^{\omega}}{3^{\omega}-1}\sigma_{g})^{2}}{(t+1)^{\omega}} \\
    &\leq \frac{2(2L_{g}D + \frac{3^{\omega}}{3^{\omega}-1}\sigma_{g})^{2}}{(t+1)^{\omega}}
\end{aligned}
\end{equation}
where the last inequality follows from the fact that $(\sqrt{2})^{\omega} \leq 3^{\omega}/(3^{\omega}-1)$ for any $\omega \in (0,1]$. Combining \eqref{eq:bound_on_et_norm} and \eqref{eq:bound_on_sum_c_t},
we have with probability at least $1-\delta^{'}$, 
\begin{equation}
    \|\nabla g(\vx_{t}) - \widehat{\nabla g}_{t}\| \leq c\sqrt{2}(2L_{g}D+\frac{3^{\omega}}{3^{\omega}-1}\sigma_{g})(t+1)^{-\omega/2}\sqrt{\log(2d/\delta^{'})} \stackrel{\text{def}}{=} K_{1, t} 
\end{equation}
Similarly with probability at least $1-\delta^{'}$,
\begin{equation}
    |g(\vx_{t})-\hat{g}_{t}| \leq c\sqrt{2}(2L_{l}D+\frac{3^{\omega}}{3^{\omega}-1}\sigma_{l})(t+1)^{-\omega/2}\sqrt{\log(2d/\delta^{'})} \stackrel{\text{def}}{=} K_{0,t}
\end{equation}
and with probability at least $1-\delta^{'}$,
\begin{equation}
    \|\nabla f(\vx_{t}) - \widehat{\nabla f}_{t}\| \leq c\sqrt{2}(2L_{f}D+\frac{3^{\omega}}{3^{\omega}-1}\sigma_{f})(t+1)^{-\omega/2}\sqrt{\log(2d/\delta^{'})} \stackrel{\text{def}}{=} K_{2, t} 
\end{equation}
where $c$ is an absolute constant and $d$ is the dimension of vectors. We can use union bound to obtain that these three inequalities hold for at least probability $1-3\delta^{'} = 1-\delta$. For simplicity, we define constant $A_{1}^{\omega}$ and $A_{0}^{\omega}$ such that,
\begin{equation}\label{eq:constant_A}
    A_{1}^{\omega}(t+1)^{-\omega/2}\sqrt{\log(6d/\delta)} = K_{1,t} \quad \text{and} \quad A_{0}^{\omega}(t+1)^{-\omega/2}\sqrt{\log(6d/\delta)} = K_{0,t}
\end{equation}
and similarly $A_{2}^{\omega}(t+1)^{-\omega/2}\sqrt{\log(6d/\delta)} = K_{2,t}$.
\end{proof}

\subsection{Proof of Lemma~\ref{lem:HPB4SPIDER}}

\begin{proof}
 Let us define $t_0\triangleq \lfloor t/q \rfloor$ for any $t\in\{0,\hdots,T-1\}$, then whenever $t=t_0q$ according to the Algorithm \ref{alg:SPIDER} a full batch of sample gradients are selected, hence, $\widehat{\nabla g}_{t} = \nabla g(\vx_{t})$; otherwise, the error of computing a sample gradient can be expressed as follows 
    \begin{equation}\label{eq:sample-grad-error-g}
        \epsilon_{t, i} = \frac{1}{S}(\nabla g_{\mathcal{S}(i)}(\vx_{t})-\nabla g_{\mathcal{S}(i)}(\vx_{t-1}) - \nabla g(\vx_{t}) + \nabla g(\vx_{t-1})),
    \end{equation}%
    where $i$ is the index with $\mathcal{S}(i)$ denoting the $i$-th random component function selected at iteration $t$. 
    Furthermore, from the update rule of $x_t$ we have $\|\vx_{t} - \vx_{t-1}\| = \gamma_{t}\|\vs_{t-1} - \vx_{t-1}\| \leq D\gamma$ for any $t\geq 0$, therefore,
    \begin{equation}
    \begin{aligned}
        \|\epsilon_{t, i}\| & \leq \frac{1}{S}(\|\nabla g_{i}(\vx_{t}) - \nabla g_{i}(\vx_{t-1}) \|+\|\nabla g(\vx_{t}) - \nabla g(\vx_{t-1})\|) \\
        & \leq \frac{2L_{g}}{S}\|\vx_{t} - \vx_{t-1}\| \leq \frac{2L_{g}D\gamma}{S},
    \end{aligned}
    \end{equation}
    for all $t\in \{t_0+1,\hdots, t_0+q\}$ and $i\in\{1,\hdots,  S\}$. On the other hand, from the update of $\widehat{\nabla g}_t$ and \eqref{eq:sample-grad-error-g} we have that for any $t\neq t_0q$, $\widehat{\nabla g}_{t} - \nabla g(\vx_{t})=\widehat{\nabla g}_{t-1} - \nabla g(\vx_{t-1})+\sum_{i=1}^S \epsilon_{t,i}$. Therefore, by continuing the recursive relation and taking the norm from both sides of the equality we obtain 
    \begin{equation}
    \begin{aligned}
        \|\widehat{\nabla g}_{t} - \nabla g(\vx_{t})\| 
        &= \|\widehat{\nabla g}(\vx_{t_{0}}) - \nabla g(\vx_{t_{0}})+ \sum_{j=t_{0}+1}^{t}\sum_{i=1}^S \epsilon_{j,i} \| \\
        &= 
        \|\sum_{j=t_{0}+1}^{t}\sum_{i=1}^{S} \epsilon_{j, i}\|,
    \end{aligned}
    \end{equation}
    where the last equality follows from $\widehat{\nabla g}(\vx_{t_{0}}) = \nabla g(\vx_{t_{0}})$.
    Then by Proposition~\ref{prop:finite-sum}, we have
    \begin{equation}
        \mathbb{P}(\|\widehat{\nabla g}_{t} - \nabla g(\vx_{t})\| \geq \lambda) \leq 4 \exp(-\frac{\lambda^{2}}{4S(t-t_{0})\frac{4L_{g}^{2}D^{2}\gamma^{2}}{S^{2}}}) \leq 4 \exp(-\frac{\lambda^{2}}{16L_{g}^{2}D^{2}\gamma^{2}}),
    \end{equation}
    where the last inequality follows from the fact $S = \sqrt{n}$ and $t-t_{0}\leq q = \sqrt{n}$. By setting $\lambda = (4L_{g}D\gamma\sqrt{\log(4/\delta^{'})})$ for some $\delta^{'} \in (0, 1)$, we have with probability at least $1-\delta^{'}$, 
    \begin{equation}
        \|\widehat{\nabla g}_{t} - \nabla g(\vx_{t})\| \leq 4L_{g}D\gamma\sqrt{\log(4/\delta^{'})}.
    \end{equation}
    Similarly, with probability at least $1-\delta^{'}$,
    \begin{equation}
        |\hat{g}_{t} - g(\vx_{t})| \leq 4L_{l}D\gamma\sqrt{\log(4/\delta^{'})},
    \end{equation}
    and with probability $1-\delta^{'}$,
    \begin{equation}
        \|\widehat{\nabla f}_{t} - \nabla f(\vx_{t})\| \leq 4L_{f}D\gamma\sqrt{\log(4/\delta^{'})}.
    \end{equation}
    Then by union bound and $\delta = 3\delta^{'}$, we show these three equalities hold with probability $1-\delta$.

\end{proof}

\subsection{Proof of Lemma~\ref{lem:random_set}}
\begin{proof}
    Let $\mathbf{x}_g^*$ be any point in $\mathcal{X}_g^*$, i.e., any optimal solution of the lower-level problem. By definition, we have $g\left(\mathbf{x}_g^*\right)=g^*$. Since $g$ is convex and $g^* \leq g\left(\mathbf{x}_0\right)$, we have
    \begin{equation}
    g\left(\mathbf{x}_0\right)-g\left(\mathbf{x}_t\right) \geq g^*-g\left(\mathbf{x}_t\right)=g\left(\mathbf{x}_g^*\right)-g\left(\mathbf{x}_t\right) \geq\left\langle\nabla g\left(\mathbf{x}_t\right), \mathbf{x}_g^*-\mathbf{x}_t\right\rangle
    \end{equation}

    Add and subtract terms in (47), we have,
    \begin{equation}
         \langle \widehat{\nabla g}_{t}, \vx_{g}^{*} - \vx_{t} \rangle +\hat{g}_{t} -g(\vx_{0}) \leq |\langle \widehat{\nabla g}_{t}-\nabla g(\vx_{t}), \vx_{g}^{*} - \vx_{t} \rangle| + |\hat{g}_{t} - g(\vx_{t})| 
    \end{equation}
    Considering the random hyperplane we used in \eqref{eq:sto_cutting_plane}, we want to prove the following inequality holds with high probability,
    \begin{equation}
    \langle \widehat{\nabla g}_{t}, \vx_{g}^{*} - \vx_{t} \rangle +\hat{g}_{t} -g(\vx_{0})\leq K_{t}
    \end{equation}
    Recall $K_{t}=K_{0,t}+DK_{1,t}$. And $K_{0,t}$ and $K_{1,t}$ were set as the high probability bounds of $\|\widehat{\nabla g}_{t} - \nabla g(\vx_{t})\|$ and $|\hat{g}_{t} - g(\vx_{t})|$ in Lemma \ref{lem:HPB4STORM} for Algorithm \ref{alg:STORM} or Lemma \ref{lem:HPB4SPIDER} for Algorithm \ref{alg:SPIDER}. Then compare the two inequalities above and use Jensen's inequality,  $|\langle \widehat{\nabla g}_{t}, \vx_{g}^{*} - \vx_{t} \rangle| +|\hat{g}_{t} -g(\vx_{0})| \leq K_{t}$ holds with high probability $1-\delta$ for all $t \geq 0$. Hence, Lemma~\ref{lem:random_set} holds with probability $1-\delta$ for all $t \geq 0$.
\end{proof}

\subsection{Improvement in one step}
The following lemma characterizes the improvement of both the upper-level and lower-level objective values after one step of the algorithms.
\begin{lemma} \label{lem:one_step_improvement} If Assumptions \ref{assump1}, \ref{assump2}, \ref{assump3} are satisfied,
\begin{enumerate}[label=(\roman*)]
    \item For all $t \geq 0$, assume that $\mathcal{X}_g^* \subset \mathcal{X}_t$. Then we have 
    \begin{equation}
        \gamma_{t+1}\mathcal{G}(\vx_{t}) \leq f(\vx_{t})-f(\vx_{t+1}) + \gamma_{t+1}D\|\nabla f(\vx_{t})- \widehat{\nabla f}_{t}\| +\frac{L_{f}D^{2}\gamma_{t+1}^{2}}{2}
    \end{equation}
    As a corollary, if $f$ is convex, we further have 
    \begin{equation}
        f(\vx_{t+1}) - f^{*} \leq (1- \gamma_{t+1})(f(\vx_{t}) - f^{*})) + \gamma_{t+1}D\|\nabla f(\vx_{t}) - \widehat{\nabla f}_{t}\| + \frac{L_{f}D^{2}\gamma_{t+1}^{2}}{2}.
    \end{equation}
    \item We have
    \begin{multline}
        g(\vx_{t+1}) - g(\vx_{0}) \leq (1- \gamma_{t+1})(g(\vx_{t}) - g(\vx_{0})) + D\gamma_{t+1}(\|\nabla g(\vx_{t}) - \widehat{\nabla g}_{t}\| + K_{1,{t}}) \\
        +\gamma_{t+1}(\|g(\vx_{t}) - \hat{g}_{t}\| + K_{0,{t}}) + \frac{L_{g}D^{2}\gamma_{t+1}^{2}}{2}.
    \end{multline}
\end{enumerate}

\end{lemma}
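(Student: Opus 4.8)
The plan is to handle the two parts separately, in each case starting from the standard descent inequality for the Frank--Wolfe-type update $\vx_{t+1}=(1-\gamma_{t+1})\vx_t+\gamma_{t+1}\vs_t$ and then controlling the linear term $\langle\nabla f(\vx_t),\vs_t-\vx_t\rangle$ (and likewise for $g$) using the defining optimality of $\vs_t$ over $\mathcal{X}_t$.

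For part (i), I would first apply $L_f$-smoothness of $f$ (Assumption~\ref{assump2}(i)) to get $f(\vx_{t+1})\le f(\vx_t)+\gamma_{t+1}\langle\nabla f(\vx_t),\vs_t-\vx_t\rangle+\tfrac{L_f}{2}\|\vx_{t+1}-\vx_t\|^2$, using $\|\vx_{t+1}-\vx_t\|=\gamma_{t+1}\|\vs_t-\vx_t\|\le\gamma_{t+1}D$ from Assumption~\ref{assump1}. Next let $\vs_t^*\in\argmax_{\vs\in\mathcal{X}_g^*}\langle\nabla f(\vx_t),\vx_t-\vs\rangle$, which exists by compactness of $\mathcal{X}_g^*$, so that $\langle\nabla f(\vx_t),\vs_t^*-\vx_t\rangle=-\mathcal{G}(\vx_t)$. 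Because $\mathcal{X}_g^*\subseteq\mathcal{X}_t$, the point $\vs_t^*$ is feasible for subproblem~\eqref{eq:subproblem}, hence $\langle\widehat{\nabla f}_t,\vs_t\rangle\le\langle\widehat{\nabla f}_t,\vs_t^*\rangle$. Writing $\langle\nabla f(\vx_t),\vs_t-\vs_t^*\rangle=\langle\widehat{\nabla f}_t,\vs_t-\vs_t^*\rangle+\langle\nabla f(\vx_t)-\widehat{\nabla f}_t,\vs_t-\vs_t^*\rangle$, the first term is $\le 0$ and the second is $\le D\|\nabla f(\vx_t)-\widehat{\nabla f}_t\|$ by Cauchy--Schwarz together with $\|\vs_t-\vs_t^*\|\le D$; therefore $\langle\nabla f(\vx_t),\vs_t-\vx_t\rangle\le-\mathcal{G}(\vx_t)+D\|\nabla f(\vx_t)-\widehat{\nabla f}_t\|$. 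Substituting this into the descent inequality and rearranging gives the first claim. For the corollary, if $f$ is convex then, since the bilevel optimum $\vx^*$ lies in $\mathcal{X}_g^*$, we have $\mathcal{G}(\vx_t)\ge\langle\nabla f(\vx_t),\vx_t-\vx^*\rangle\ge f(\vx_t)-f^*$; inserting this lower bound and collecting the $(1-\gamma_{t+1})$-weighted term yields the stated recursion.

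For part (ii), I would again start from $L_g$-smoothness: $g(\vx_{t+1})\le g(\vx_t)+\gamma_{t+1}\langle\nabla g(\vx_t),\vs_t-\vx_t\rangle+\tfrac{L_g D^2\gamma_{t+1}^2}{2}$. Now only the weaker fact $\vs_t\in\mathcal{X}_t$ is needed, i.e. $\langle\widehat{\nabla g}_t,\vs_t-\vx_t\rangle\le g(\vx_0)-\hat{g}_t+K_t$. Splitting $\langle\nabla g(\vx_t),\vs_t-\vx_t\rangle=\langle\widehat{\nabla g}_t,\vs_t-\vx_t\rangle+\langle\nabla g(\vx_t)-\widehat{\nabla g}_t,\vs_t-\vx_t\rangle$, bounding the second term by $D\|\nabla g(\vx_t)-\widehat{\nabla g}_t\|$, replacing $-\hat{g}_t$ by $-g(\vx_t)+|g(\vx_t)-\hat{g}_t|$, and substituting $K_t=K_{0,t}+DK_{1,t}$ gives $\langle\nabla g(\vx_t),\vs_t-\vx_t\rangle\le (g(\vx_0)-g(\vx_t))+(\|g(\vx_t)-\hat{g}_t\|+K_{0,t})+D(\|\nabla g(\vx_t)-\widehat{\nabla g}_t\|+K_{1,t})$. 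Plugging this into the descent bound, subtracting $g(\vx_0)$, and grouping the $g(\vx_t)-g(\vx_0)$ terms into $(1-\gamma_{t+1})(g(\vx_t)-g(\vx_0))$ reproduces the stated inequality.

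The only subtle point is the linear-term estimate in part (i): pairing the gradient error with the difference $\vs_t-\vs_t^*$, rather than bounding $\langle\nabla f(\vx_t)-\widehat{\nabla f}_t,\vs_t-\vx_t\rangle$ and $\langle\nabla f(\vx_t)-\widehat{\nabla f}_t,\vs_t^*-\vx_t\rangle$ separately, is what keeps the constant at a single $D$ rather than $2D$. Everything else is routine manipulation using smoothness, the diameter bound, and the definitions of $\vs_t$ and $K_t$.
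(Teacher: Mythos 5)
Your proposal is correct and follows essentially the same route as the paper's proof: smoothness plus the diameter bound, the feasibility of a comparison point from $\mathcal{X}_g^*\subseteq\mathcal{X}_t$ to exploit the optimality of $\vs_t$, and the pairing of the gradient error with $\vs_t-\vs_t^*$ to keep a single factor of $D$ (the paper does the identical pairing, only taking its comparison point as the maximizer of $\langle\nabla f(\vx_t),\vx_t-\vs\rangle$ over $\mathcal{X}_t$ rather than over $\mathcal{X}_g^*$, which bounds the resulting term by $-\mathcal{G}(\vx_t)$ instead of equating it). Part (ii) and the convex corollary match the paper's manipulations step for step.
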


\begin{proof}

    (i) Based on the $L_{f}$-smoothness of the expected function $f$ we show that $f(\vx_{t+1})$ is bounded by
    \begin{equation}
        f(\vx_{t+1}) \leq f(\vx_{t}) + \nabla f(\vx_{t})^\top(\vx_{t+1} - \vx_{t}) + \frac{L_{f}}{2}\|\vx_{t+1} - \vx_{t}\|^{2}
    \end{equation}
    Replace the terms $\vx_{t+1} - \vx_{t}$ by $\gamma_{t+1}(\vs_{t} - \vx_{t})$ and add and subtract the term $\gamma_{t+1}\widehat{\nabla f}_{t}^{T}(\vs_{t} - \vx_{t})$
    to the right hand side to obtain,
    \begin{equation}
        f(\vx_{t+1}) \leq f(\vx_{t}) + \gamma_{t+1}(\nabla f(\vx_{t})- \widehat{\nabla f}_{t})^\top(\vs_{t} - \vx_{t}) + \gamma_{t+1}\widehat{\nabla f}_{t}^\top(\vs_{t} - \vx_{t})+\frac{L_{f}}{2}\|\vx_{t+1} - \vx_{t}\|^{2}
    \end{equation}
    By Lemma \ref{lem:random_set}, $\mathcal{X}_{g}^{*} \subseteq \mathcal{X}_{t}$ with high probability $1-\delta$, for all $t = 1, \dots, T$. Note that if we define $\vs_{t}^{\prime} = \argmax_{\vs \in \mathcal{X}_{t}}\{\langle \nabla f(\vx_{t}), \vx_{t} - \vs \rangle\}$. Recall that FW gap is $\mathcal{G}(\hat{\vx}) = \max_{s \in \mathcal{X}_{g}^{*}}\{\langle \nabla f(\hat{\vx}), \hat{\vx}-\vs \rangle\}$.
    We can replace the inner product $\langle \widehat{\nabla f}_{t}, \vs_{t}\rangle$ by its upper bound $\langle \widehat{\nabla f}_{t}, \vs_{t}^{\prime}\rangle$. Applying this substitution leads to
    \begin{equation}
    \begin{aligned}
        f(\vx_{t+1}) &\leq f(\vx_{t}) + \gamma_{t+1}(\nabla f(\vx_{t})- \widehat{\nabla f}_{t})^\top(\vs_{t} - \vx_{t}) + \gamma_{t+1}\widehat{\nabla f}_{t}^\top(\vs_{t}^{\prime} - \vx_{t})+\frac{L_{f}}{2}\|\vx_{t+1} - \vx_{t}\|^{2} \\
        &= f(\vx_{t}) + \gamma_{t+1}(\nabla f(\vx_{t})- \widehat{\nabla f}_{t})^\top(\vs_{t} - \vx_{t}) + \gamma_{t+1}(\widehat{\nabla f}_{t}-\nabla f(\vx_{t}))^\top(\vs_{t}^{\prime} - \vx_{t})\\
        &-\gamma_{t+1}\nabla f(\vx_{t})^\top(\vx_{t}-\vs_{t}^{\prime})+\frac{L_{f}}{2}\|\vx_{t+1} - \vx_{t}\|^{2} \\
        &\leq f(\vx_{t}) + \gamma_{t+1}(\nabla f(\vx_{t})- \widehat{\nabla f}_{t})^\top(\vs_{t} - \vs_{t}^{\prime}) - \gamma_{t+1}\mathcal{G}(\vx_{t})+\frac{L_{f}}{2}\|\vx_{t+1} - \vx_{t}\|^{2} \\
        &\leq f(\vx_{t}) + \gamma_{t+1}D\|\nabla f(\vx_{t})- \widehat{\nabla f}_{t}\| - \gamma_{t+1}\mathcal{G}(\vx_{t})+\frac{L_{f}\gamma_{t+1}^{2}D^{2}}{2} 
    \end{aligned}
    \end{equation}
    Rearrange the terms for the inequality above, we can obtain,
    \begin{equation}
        \gamma_{t+1}\mathcal{G}(\vx_{t}) \leq f(\vx_{t})-f(\vx_{t+1}) + \gamma_{t+1}D\|\nabla f(\vx_{t})- \widehat{\nabla f}_{t})\| +\frac{L_{f}\gamma_{t+1}^{2}D^{2}}{2}
    \end{equation}
    As a simple corollary, since $\mathcal{G}(\vx_{t}) \geq f(\vx_{t}) - f^*$ when $f$ is convex, we have, 
    \begin{equation}
        f(\vx_{t+1}) - f^{*} \leq (1- \gamma_{t+1})(f(\vx_{t}) - f^{*})) + \gamma_{t+1}D\|\nabla f(\vx_{t}) - \widehat{\nabla f}_{t}\| + \frac{L_{f}D^{2}\gamma_{t+1}^{2}}{2}
    \end{equation}
    (ii) Based on the $L_{g}$-smoothness of the expected function $g$ we show that $g(\vx_{t+1})$ is bounded by
    \begin{equation}
        g(\vx_{t+1}) \leq g(\vx_{t}) + \nabla g(\vx_{t})^\top(\vx_{t+1} - \vx_{t}) + \frac{L_{g}}{2}\|\vx_{t+1} - \vx_{t}\|^{2}
    \end{equation}
    Replace the terms $\vx_{t+1} - \vx_{t}$ by $\gamma_{t+1}(\vs_{t} - \vx_{t})$ and add and subtract the term $\gamma_{t+1} \widehat{\nabla g}_{t}^\top(\vs_{t} - \vx_{t})$
    to the right-hand side to obtain,
    \begin{equation}
        g(\vx_{t+1}) \leq g(\vx_{t}) + \gamma_{t+1}(\nabla g(\vx_{t})- \widehat{\nabla g}_{t})^\top(\vs_{t} - \vx_{t}) + \gamma_{t+1}\widehat{\nabla g}_{t}^\top(\vs_{t} - \vx_{t})+\frac{L_{g}}{2}\|\vx_{t+1} - \vx_{t}\|^{2}
    \end{equation}
    Now by definition of the set $\mathcal{X}_{t}$, using $\langle \widehat{\nabla g}_{t}, \vs_{t} - \vx_t \rangle \leq g(\vx_0) - \hat{g}_{t} + K_{0,t} + D K_{1,t}$. In addition, we could use Cauchy–Schwarz inequality to upper bound the second term. Then add and subtract $\gamma_{t+1} g(\vx_{0})$ on the right hand side to obtain,
    \begin{equation}
    \begin{aligned}
        g(\vx_{t+1}) &\leq g(\vx_{t}) + \gamma_{t+1}(g(\vx_{0})- g(\vx_{t})) + \gamma_{t+1}D\|\nabla g(\vx_{t})- \widehat{\nabla g}_{t}\| \\
        &+ \gamma_{t+1}(g(\vx_{t}) - \hat{g}_{t}) + \gamma_{t+1}(K_{0_{t}} + D K_{1_{t}})+\frac{L_{g}}{2}\|\vx_{t+1} - \vx_{t}\|^{2} 
    \end{aligned}
    \end{equation}
    Then subtract $g(\vx_{0})$ on both sides,
    \begin{equation}
    \begin{aligned}
        g(\vx_{t+1}) - g(\vx_{0}) &\leq (1-\gamma_{t+1}) (g(\vx_{t})-g(\vx_{0})) \\
        &+ \gamma_{t+1}(D\|\nabla g(\vx_{t})- \widehat{\nabla g}_{t}\| + \|g(\vx_{t}) - \hat{g}_{t}\| + K_{0,{t}} + D K_{1,{t}})+\frac{L_{g}}{2}\|\vx_{t+1} - \vx_{t}\|^{2} 
    \end{aligned}
    \end{equation}
    and the claim in the lemma follows. 
\end{proof} 
\section{Proof of Theorem for Algorithm~\ref{alg:STORM}} \label{appen:STORM}
\subsection{Proof of Theorem~\ref{thm:s&c}}
\begin{proof}
    For \textbf{lower-level}, by Lemma~\ref{lem:one_step_improvement}, we have
    \begin{equation}
    \begin{aligned}
        g(\vx_{t+1}) - g(\vx_{0}) 
        &\leq (1-\gamma_{t+1}) (g(\vx_{t})-g(\vx_{0})) + D\gamma_{t+1}(\|\nabla g(\vx_{t}) - \widehat{\nabla g}_{t}\| + K_{1,{t}}) \\
        &+ \gamma_{t+1}(\|g(\vx_{t}) - \hat{g}_{t}\| + K_{0,{t}}) + \frac{L_{g}D^{2}\gamma_{t+1}^{2}}{2}  
    \end{aligned}
    \end{equation}
    
    By Lemma~\ref{lem:HPB4STORM}, we have $\|\nabla g(\vx_{t}) - \widehat{\nabla g}_{t}\| \leq K_{1,{t}}$ and $\|g(\vx_{t}) - \hat{g}_{t}\| \leq K_{0,{t}}$ with probability $1-\delta^{'}$. Plug them in the inequality above to obtain,
    \begin{equation}
    \begin{aligned}
        g(\vx_{t+1}) - g(\vx_{0}) 
        &\leq (1-\gamma_{t+1}) (g(\vx_{t})-g(\vx_{0})) + 2\gamma_{t+1}(DK_{1,{t}} + K_{0,{t}}) + \frac{L_{g}D^{2}\gamma_{t+1}^{2}}{2}  \\
        &\leq (1-\frac{1}{t+1})(g(\vx_{t}) - g(\vx_{0})) \\
        &+ \frac{2(DA_{1}^{1}\sqrt{\log(6d/\delta^{'})}+A_{0}^{1}\sqrt{\log(6/\delta^{'}}))}{(t+1)^{3/2}}+\frac{L_{g}D^{2}}{2(t+1)^{2}}    
    \end{aligned}
    \end{equation}
    with probability $1-\delta^{'}$ for all $t$. Let $C_{1} = 4(DA_{1}^{1}+A_{0}^{1})$ and $\delta = t\delta^{'}$. Then we can sum all the inequality up for all $t$ to obtain,
    \begin{equation}
    \begin{aligned}
        g(\vx_{t+1}) - g(\vx_{0}) 
        &\leq (1-\frac{1}{t+1})g(\vx_{t}) - g(\vx_{0}) + \frac{C_{1}/2\sqrt{\log{(6d/\delta^{'})}}}{(t+1)^{3/2}}+\frac{L_{g}D^{2}}{2(t+1)^{2}}\\
        &= \prod_{i = 1}^{t}(1-\frac{1}{i+1})(g(\vx_{0}) - g(\vx_{0})) + \sum_{k = 1}^{t}\frac{C_{1}/2\sqrt{\log{(6d/\delta^{'})}}}{(k+1)^{3/2}}\prod_{i=k+1}^{t}(1-\frac{1}{i+1})\\
        &+ \sum_{k = 1}^{t}\frac{L_{g}D^{2}}{2(k+1)^{2}}\prod_{i=k+1}^{t}(1-\frac{1}{i+1})\\
        &\leq  0 + \frac{C_{1}/2\sqrt{\log{(6d/\delta^{'})}}}{t+1}\sum_{k=1}^{t}\frac{1}{\sqrt{k+1}} + \frac{L_{g}D^{2}}{2(t+1)}\sum_{k=1}^{t}\frac{1}{k+1}\\
        &\leq \frac{C_{1}\sqrt{\log{(6d/\delta^{'})}}}{\sqrt{t+1}}+\frac{L_{g}D^{2}}{2(t+1)}(1+\log{t})\\
        &\leq \frac{C_{1}\sqrt{\log{(6td/\delta)}}}{\sqrt{t+1}}+\frac{L_{g}D^{2}\log{t}}{t+1}
    \end{aligned}
    \end{equation}
    with probability $1-\delta$.

    For \textbf{upper-level}, by Lemma~\ref{lem:one_step_improvement}, we have
    \begin{equation}
    \begin{aligned}
        f(\vx_{t+1}) - f^{*} 
        &\leq (1-\gamma_{t+1}) (f(\vx_{t})-f^{*}) + D\gamma_{t+1}(\|\nabla f(\vx_{t}) - \widehat{\nabla f}_{t}\|)+\frac{L_{f}D^{2}\gamma_{t+1}^{2}}{2}  
    \end{aligned}
    \end{equation}
    By Lemma~\ref{lem:HPB4STORM}, we have $\|\nabla f(\vx_{t}) - \widehat{\nabla f}_{t}\| \leq \frac{A_2^1\sqrt{log(6d/\delta^{'})}}{(t+1)^{1/2}}$ with probability $1-\delta^{'}$. Plug it in the inequality above to obtain,
    \begin{equation}
    \begin{aligned}
        f(\vx_{t+1}) - f^{*}
        &\leq (1-\frac{1}{t+1})(f(\vx_{t}) - f^{*}) + \frac{DA_{2}^{1}\sqrt{\log(6d/\delta^{'})}}{(t+1)^{3/2}}+\frac{L_{f}D^{2}}{2(t+1)^{2}}    
    \end{aligned}
    \end{equation}
    with probability $1-\delta^{'}$ for all $t$. Then we can sum all the inequality up for all $t$ to obtain,
    \begin{equation}
    \begin{aligned}
        f(\vx_{t+1}) - f^{*} 
        &\leq (1-\frac{1}{t+1})(f(\vx_{t}) - f^{*}) + \frac{DA_{2}^{1}\sqrt{\log(6d/\delta^{'})}}{(t+1)^{3/2}}+\frac{L_{f}D^{2}}{2(t+1)^{2}}\\
        &= \prod_{i = 1}^{t}(1-\frac{1}{i+1})(f(\vx_{0}) - f^{*}) + \sum_{k = 1}^{t}\frac{DA_{2}^{1}\sqrt{\log(6d/\delta^{'})}}{(k+1)^{3/2}}\prod_{i=k+1}^{t}(1-\frac{1}{i+1})\\
        &+ \sum_{k = 1}^{T}\frac{L_{f}D^{2}}{2(k+1)^{2}}\prod_{i=k+1}^{T}(1-\frac{1}{i+1})\\
        &\leq  \frac{f(\vx_{0})-f^{*}}{t+1} + \frac{DA_{2}^{1}\sqrt{\log(d/\delta^{'})}}{t+1}\sum_{k=1}^{T}\frac{1}{\sqrt{k+1}} + \frac{L_{f}D^{2}}{2(t+1)}\sum_{k=1}^{T}\frac{1}{k+1}\\
        &\leq \frac{f(\vx_{0})-f^{*}}{t+1}+ \frac{2DA_{2}^{1}\sqrt{\log(6d/\delta^{'})}}{\sqrt{t+1}}+\frac{L_{f}D^{2}}{2(t+1)}(1+\log{t})\\
        &\leq \frac{f(\vx_{0})-f^{*}}{t+1}+ \frac{2DA_{2}^{1}\sqrt{\log(6td/\delta)}}{\sqrt{t+1}}+\frac{L_{f}D^{2}\log{t}}{(t+1)}
    \end{aligned}
    \end{equation}
    with probability $1-\delta = 1-t\delta^{'}$. Let $C_{2} = 2DA_{2}^{1}$. The theorem is obtained.

\end{proof}
\subsection{Proof of Theorem~\ref{thm:s&nc}}
\begin{proof}
    For \textbf{lower-level}, by Lemma~\ref{lem:one_step_improvement}, we have
    \begin{equation}
    \begin{aligned}
        g(\vx_{t+1}) - g(\vx_{0}) 
        &\leq (1-\gamma_{t+1}) (g(\vx_{t})-g(\vx_{0})) + D\gamma_{t+1}(\|\nabla g(\vx_{t}) - \widehat{\nabla g}_{t}\| + K_{1,{t}}) \\
        &+ \gamma_{t+1}(\|g(\vx_{t}) - \hat{g}_{t}\| + K_{0,{t}}) + \frac{L_{g}D^{2}\gamma_{t+1}^{2}}{2}  
    \end{aligned}
    \end{equation}
    
    By Lemma~\ref{lem:HPB4STORM}, we have $\|\nabla g(\vx_{t}) - \widehat{\nabla g}_{t}\| \leq K_{1,{t}}$ and $\|g(\vx_{t}) - \hat{g}_{t}\| \leq K_{0,{t}}$ with probability $1-\delta^{'}$. Plug them in the inequality above to obtain,
    \begin{equation}
    \begin{aligned}
        g(\vx_{t+1}) - g(\vx_{0}) 
        &\leq (1-\gamma_{t+1}) (g(\vx_{t})-g(\vx_{0})) + 2\gamma_{T+1}(DK_{1,{t}} + K_{0,{t}}) + \frac{L_{g}D^{2}\gamma_{t+1}^{2}}{2}  \\
        &\leq (1-\frac{1}{(T+1)^{2/3}})g(\vx_{t}) - g(\vx_{0}) \\
        &+ \frac{2D(A_{1}^{2/3}\sqrt{\log(6d/\delta^{'})}+A_{0}^{2/3}\sqrt{\log(6d/\delta^{'}}))}{(t+1)^{1/3}(T+1)^{2/3}}+\frac{L_{g}D^{2}}{2(T+1)^{4/3}}    
    \end{aligned}
    \end{equation}
    with probability $1-\delta^{'}$ for all $t$. Let $C_{3} = 2(DA_{1}^{2/3}+A_{0}^{2/3})$.Then we can sum all the inequality up for all $t$ to obtain,
    \begin{equation}
    \begin{aligned}
        g(\vx_{t+1}) - g(\vx_{0}) 
        &\leq (1-\frac{1}{(T+1)^{2/3}})(g(\vx_{t}) - g(\vx_{0})) + \frac{C_{3}\sqrt{\log(6d/\delta^{'})}}{(t+1)^{1/3}(T+1)^{2/3}}+\frac{L_{g}D^{2}}{2(T+1)^{4/3}}\\
        &\leq (1-\frac{1}{(T+1)^{2/3}})(g(\vx_{t}) - g(\vx_{0})) + \frac{C_{3}\sqrt{\log(6Td/\delta)}+L_{g}D^{2}/2}{(t+1)^{1/3}(T+1)^{2/3}} 
    \end{aligned}
    \end{equation}
    By induction, we have for all $t \geq 1$,
    \begin{equation}
        g(\vx_{t+1}) - g(\vx_{0}) \leq \frac{C_{3}\sqrt{\log(6Td/\delta)}+L_{g}D^{2}/2}{(T+1)^{1/3}}
    \end{equation}
    with probability $1-\delta$, where $\delta = T\delta^{'}$.

    For \textbf{upper-level}, by Lemma~\ref{lem:one_step_improvement}, we have
    \begin{equation}
    \begin{aligned}
        \gamma_{t+1}\mathcal{G}(\vx_{t}) \leq f(\vx_{t})-f(\vx_{t+1}) + \gamma_{t+1}D\|\nabla f(\vx_{t})- \widehat{\nabla f}_{t}\| +\frac{L_{f}\gamma_{t+1}^{2}D^{2}}{2}
    \end{aligned}
    \end{equation}
    
    By Lemma~\ref{lem:HPB4STORM}, we have $\|\nabla f(\vx_{t}) - \widehat{\nabla f}_{t}\| \leq \frac{A_2^{2/3}\sqrt{log(6d/\delta^{'})}}{(t+1)^{1/3}}$ with probability $1-\delta^{'}$. Plug it and $\gamma_{t+1} = 1/(T+1)^{2/3}$ in inequality above to obtain,
    \begin{equation}
    \begin{aligned}
        \sum_{t=0}^{T-1}\gamma_{t+1}\mathcal{G}(\vx_{t}) &\leq f(\vx_{0}) - f(\vx_{T}) + D\sum_{t=0}^{T-1}\gamma_{t+1}\|\nabla f(\vx_{t}) - \widehat{\nabla f}_{t}\| + \frac{L_{f}D^{2}}{2}\sum_{t=0}^{T-1}\gamma_{t+1}^{2} \\
        &  \leq f(\vx_{0}) - f(\vx_{T}) + D\sum_{t=0}^{T-1}\frac{A_2^{2/3}\sqrt{log(6d/\delta^{'})}}{(t+1)^{1/3}(T+1)^{2/3}} + \frac{L_{f}D^{2}}{2}\sum_{t=0}^{T-1}\frac{1}{(T+1)^{4/3}} \\
        & \leq f(\vx_{0}) - f(\vx_{T}) + \frac{3}{2}DA_2^{2/3}\sqrt{log(6d/\delta^{'})} + \frac{L_{f}D^{2}}{2}\frac{1}{(T+1)^{1/3}} 
    \end{aligned}
    \end{equation}
    Let $\vx_{t^{*}} = \argmin_{1\leq t\leq T} \mathcal{G}(\vx_{t})$, then
    \begin{equation}
    \begin{aligned}
        \mathcal{G}(\vx_{t^{*}}) &\leq \frac{1}{\sum_{t=0}^{T-1}\gamma_{t+1}}\sum_{t=0}^{T-1}\gamma_{t+1}\mathcal{G}(\vx_{t})\\
        & \leq \frac{1}{(T+1)^{1/3}}(f(\vx_{0}) - f(\vx_{T}) + \frac{3}{2}DA_{2}^{2/3}\sqrt{log(6Td/\delta)} + \frac{L_{f}D^{2}}{2}\frac{1}{(T+1)^{1/3}}) \\
        & \leq \frac{1}{(T+1)^{1/3}}(f(\vx_{0}) - \underline{f} + \frac{3}{2}DA_{2}^{2/3}\sqrt{log(6Td/\delta)} + \frac{L_{f}D^{2}}{2}\frac{1}{(T+1)^{1/3}}) 
    \end{aligned}
    \end{equation}
    with probability $1-\delta$, where $\delta = T\delta^{'}$. By letting $C_{4} = \frac{3}{2}DA_{2}^{2/3}$, the theorem is obtained.
\end{proof}
\section{Proof of Theorem for Algorithm~\ref{alg:SPIDER}}\label{appen:spider}

\subsection{Proof of Theorem~\ref{thm:spider&c}}
\begin{proof}
    For \textbf{lower-level} By Lemma~\ref{lem:one_step_improvement}, we have
    \begin{equation}
    \begin{aligned}
        g(\vx_{t+1}) - g(\vx_{0}) 
        &\leq (1-\gamma_{t+1}) (g(\vx_{t})-g(\vx_{0})) + D\gamma_{t+1}(\|\nabla g(\vx_{t}) - \widehat{\nabla g}_{t}\| + K_{1,{t}}) \\
        &+ \gamma_{t+1}(\|g(\vx_{t}) - \hat{g}_{t}\| + K_{0,{t}}) + \frac{L_{g}D^{2}\gamma_{t+1}^{2}}{2}  
    \end{aligned}
    \end{equation}
    
    By Lemma~\ref{lem:HPB4SPIDER}, we have $\|\nabla g(\vx_{t}) - \widehat{\nabla g}_{t}\| \leq 4L_{g}D\gamma\sqrt{\log(12/\delta^{'})}$ and $\|g(\vx_{t}) - \hat{g}_{t}\| \leq 4L_{l}D\gamma\sqrt{\log(12/\delta^{'})}$ with probability $1-\delta^{'}$. Let $C_{5} = 8D(DL_{g}+L_{l})$ and $\delta = T\delta^{'}$. Plug them in inequality above and let $\gamma_{t} = \gamma = \log{T}/T$ to obtain,

    \begin{equation}
    \begin{aligned}
        g(\vx_{T+1}) - g(\vx_{0}) 
        &\leq (1-\gamma)(g(\vx_{T}) - g(\vx_{0})) + (C_{5}\sqrt{\log(12/\delta^{'})}+L_{g}D^{2}/2)\gamma^{2}
    \end{aligned}
    \end{equation}
    with probability $1-\delta/T$. Sum up the inequalities for all $1\leq t \leq T$ to get, 
    \begin{equation}
    \begin{aligned}
        g(\vx_{T+1}) - g(\vx_{0}) 
        &= (1-\gamma)^{T}(g(\vx_{0}) - g(\vx_{0})) + (C_{5}\sqrt{\log(12/\delta^{'})}+L_{g}D^{2}/2)\gamma^{2}\sum_{k = 1}^{T}(1-\gamma)^{k}\\
        &\leq 0 + (C_{5}\sqrt{\log(12/\delta^{'})}+L_{g}D^{2}/2)\gamma \leq \frac{(C_{5}\sqrt{\log(12T/\delta)}+L_{g}D^{2}/2)\log{T}}{T} 
    \end{aligned}
    \end{equation}

    with probability $1-\delta$.

    For \textbf{upper-level}, by Lemma~\ref{lem:one_step_improvement}, we have,
    \begin{equation}
    \begin{aligned}
        f(\vx_{T}) - f^*
        \leq (1-\gamma_{T}) f(\vx_{T-1})-f^*
        + D\gamma_{T}\|\nabla f(\vx_{T-1}) - \widehat{\nabla f}_{t-1}\| + \frac{L_{f}D^{2}\gamma_{T}^{2}}{2} 
    \end{aligned}
    \end{equation}
    
    Now we proceed by replacing the terms $\|\nabla f(\vx_{t}) - \widehat{\nabla f}_{t}\|$ by its upper bounds from Lemma \ref{lem:HPB4SPIDER}, i.e. $\|\nabla f(\vx_{t}) - \widehat{\nabla f}_{t}\| \leq 4L_{f}D\gamma\sqrt{\log{(12/\delta^{'})}}$,
    \begin{equation}
        f(\vx_{T}) - f^* \leq (1-\gamma)(f(\vx_{T-1}) - f^*) + L_{f}D^{2}\gamma^2(4\sqrt{\log{(12/\delta^{'})}}+1/2)
    \end{equation}
    with probability $(1-\delta^{'})$. And we can choose $\delta=3T\delta^{'}$
    Then by telescope, with $\gamma = \frac{\log{T}}{T}$, we can obtain,
    \begin{equation}
    \begin{aligned}
        f(\vx_{T}) - f^* 
        &\leq (1-\gamma)^T(f(\vx_{0}) - f^*) + (4\sqrt{\log{(12/\delta^{'})}}+1/2)L_{f}D^2\gamma^2\sum_{i=1}^{T}(1-\gamma)^{i}\\
        &\leq (1-\gamma)^T(f(\vx_{0}) - f^*) + (4\sqrt{\log{(12/\delta^{'})}}+1/2)L_{f}D^2\gamma \\
        &\leq \exp{(-\gamma T)}(f(\vx_{0}) - f^*) + (4\sqrt{\log{(12/\delta^{'})}}+1/2)L_{f}D^2\gamma \\
        &\leq (f(\vx_{0}) - f^*)/T + (4\sqrt{\log{(12T/\delta)}}+1/2)L_{f}D^2\log{T}/T
    \end{aligned}
    \end{equation}
    with probability $1-\delta$. Note that without loss of generality, we can assume $f(\vx_{0})-f^{*} \geq 0$. If it is less than 0, we can bound it by 0. By letting $C_{6} = 5L_{f}D^{2}$, the theorem is obtained.

\end{proof}
\subsection{Proof of Theorem~\ref{thm:spider&nc}}
\begin{proof}
    For \textbf{lower-level}, by Lemma~\ref{lem:one_step_improvement}, we have
    \begin{equation}
    \begin{aligned}
        g(\vx_{t+1}) - g(\vx_{0}) 
        &\leq (1-\gamma_{t+1}) (g(\vx_{t})-g(\vx_{0})) + D\gamma_{t+1}(\|\nabla g(\vx_{t}) - \widehat{\nabla g}_{t}\| + K_{1,{t}}) \\
        &+ \gamma_{t+1}(\|g(\vx_{t}) - \hat{g}_{t}\| + K_{0,{t}}) + \frac{L_{g}D^{2}\gamma_{t+1}^{2}}{2}  
    \end{aligned}
    \end{equation}
    
    By Lemma~\ref{lem:HPB4SPIDER}, we have $\|\nabla g(\vx_{t}) - \widehat{\nabla g}_{t}\| \leq 4L_{g}D\gamma\sqrt{\log(12/\delta^{'})}$ and $\|g(\vx_{t}) - \hat{g}_{t}\| \leq 4L_{l}D\gamma\sqrt{\log(12/\delta^{'})}$ with probability $1-\delta^{'}$. Let $C_{7} = 8D(DL_{g}+L_{l})$ and $\delta = T\delta^{'}$. Plug them in inequality above and let $\gamma_{t} = 1/\sqrt{T}$ to obtain,

    \begin{equation}
    \begin{aligned}
        g(\vx_{t+1}) - g(\vx_{0}) 
        &\leq (1-\frac{1}{T^{1/2}})(g(\vx_{t}) - g(\vx_{0})) + \frac{C_{7}\sqrt{\log(12/\delta^{'})}}{T}+\frac{L_{g}D^{2}}{2T}\\
        &\leq (1-\frac{1}{T^{1/2}})(g(\vx_{t}) - g(\vx_{0})) + \frac{C_{7}\sqrt{\log(12/\delta^{'})}+L_{g}D^{2}/2}{T}
    \end{aligned}
    \end{equation}
    with probability $1-\delta/T$. Sum up the inequalities for all $t\geq 1$ to get, 
    \begin{equation}
    \begin{aligned}
        g(\vx_{t+1}) - g(\vx_{0}) 
        &= (1-\frac{1}{T^{1/2}})^{t}\mathbb{E}[g(\vx_{0}) - g(\vx_{0})] + \frac{(C_{7}\sqrt{\log(12/\delta^{'})}+L_{g}D^{2}/2)}{T}\sum_{k = 1}^{t}(1-\frac{1}{T^{1/2}})^{k}\\
        &\leq \frac{C_{7}\sqrt{\log(12T/\delta)}+L_{g}D^{2}/2}{T^{1/2}} 
    \end{aligned}
    \end{equation}

    with probability $1-\delta$.

    For \textbf{upper-level}, by Lemma~\ref{lem:one_step_improvement}, we have
    \begin{equation}
    \begin{aligned}
        \gamma_{t+1}\mathcal{G}(\vx_{t}) \leq f(\vx_{t})-f(\vx_{t+1}) + \gamma_{t+1}D\|\nabla f(\vx_{t})- \widehat{\nabla f}_{t}\| +\frac{L_{f}\gamma_{t+1}^{2}D^{2}}{2}
    \end{aligned}
    \end{equation}
    
    By Lemma~\ref{lem:HPB4SPIDER}, we have $\|\nabla f(\vx_{t}) - \widehat{\nabla f}_{t}\| \leq 4L_{f}D\gamma\sqrt{\log(12/\delta^{'})}$ with probability $1-\delta^{'}$. Plug it and $\gamma_{t+1} = 1/\sqrt{T}$ in inequality above to obtain,
    \begin{equation}
    \begin{aligned}
        \frac{1}{\sqrt{T}}\sum_{t=0}^{T-1}\mathcal{G}(\vx_{t}) &\leq f(\vx_{0}) - f(\vx_{T}) + D\sum_{t=0}^{T-1}\gamma_{t+1}\|\nabla f(\vx_{t}) - \widehat{\nabla f}_{t}\| + \frac{L_{f}D^{2}}{2}\sum_{t=0}^{T-1}\gamma_{t+1}^{2} \\
        &  \leq f(\vx_{0}) - f(\vx_{T}) + L_{f}D^{2}(4\sqrt{\log(12\delta^{'})}+1/2) 
    \end{aligned}
    \end{equation}
    Divide both sides by $\sqrt{T}$, we can get,
    Let $\vx_{t^{*}}\triangleq \argmin_{1\leq t\leq T} \mathcal{G}(\vx_{t})$, then
    \begin{equation}
        \mathcal{G}(\vx_{t^{*}}) \leq \frac{1}{T}\sum_{t=0}^{T-1}\mathcal{G}(\vx_{t})   \leq \frac{f(\vx_{0}) - \underline{f} + L_{f}D^{2}(4\sqrt{\log(12T/\delta)}+1/2)}{T^{1/2}} 
    \end{equation}
    with probability $1-\delta$. By letting $C_{8} = 5L_{f}D^{2}$, the theorem is obtained.
\end{proof}

\section{Azuma-Hoeffding-type inequalities}
In this section, we present two useful vector versions of Azuma-Hoeffding-type concentration inequalities with uniform bound assumption or sub-gaussian assumption. They are crucial in our high probability analysis.

\begin{proposition} \label{prop:finite-sum}
(Pinelis and other 1994 \cite{pinelis1994}, Theorem 3.5) Let $\zeta_{1}, \dots, \zeta_{t} \in \mathbb{R}^{d}$ be a vector-valued martingale difference sequence w.r.t. a filtration $\{\mathcal{F}_{t}\}$, i.e. for each $\tau \in 1, \dots, t$, we have $\mathbb{E}[\zeta_{\tau}|\mathcal{F}_{\tau-1}] = 0$. Suppose that $\|\zeta_{\tau}\| \leq c_{\tau}$ almost surely. Then $\forall t\geq 1$,
\begin{equation}
    P(\|\sum_{\tau =1}^{T}\zeta_{\tau}\| \geq \lambda) \leq 4\exp(-\frac{\lambda^{2}}{4\sum_{\tau=1}^{T} c_{\tau}^{2}})
\end{equation}
\end{proposition}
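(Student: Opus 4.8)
The plan is to derive the bound by the Chernoff method applied to an exponential supermartingale built from the norm process, in the spirit of \cite{pinelis1994}. Set $S_t=\sum_{\tau=1}^{t}\zeta_\tau$, so $S_0=0$; fix $h>0$ and abbreviate $V=\sum_{\tau=1}^{T}c_\tau^2$. Two features distinguish this from the scalar Azuma--Hoeffding inequality: $\|S_t\|$ is only a submartingale, so $e^{h\|S_t\|}$ need not become a supermartingale after the usual deterministic correction, and $x\mapsto e^{h\|x\|}$ is non-smooth at the origin. Both are circumvented by working with the even, smooth transform $x\mapsto\cosh(h\|x\|)$. Since $\cosh u\ge\tfrac12 e^{u}$ for $u\ge0$, Markov's inequality gives $P(\|S_T\|\ge\lambda)\le 2e^{-h\lambda}\,\mathbb{E}[\cosh(h\|S_T\|)]$, so it suffices to bound the exponential moment $\mathbb{E}[\cosh(h\|S_T\|)]$.

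\textbf{Key one-step estimate.} The technical core is a conditional contraction inequality of the form $\mathbb{E}[\cosh(h\|S_t\|)\mid\mathcal{F}_{t-1}]\le\cosh(h\|S_{t-1}\|)\,e^{h^2c_t^2/2}$ (possibly with a harmless absolute constant in the exponent). The idea behind it: condition on $\mathcal{F}_{t-1}$, put $x=S_{t-1}$ and $\zeta=\zeta_t$, and use the Hilbert identity $\|x+\zeta\|^2=(\|x\|+\zeta_{\parallel})^2+\|\zeta_{\perp}\|^2$ with $\zeta_{\parallel}=\langle x/\|x\|,\zeta\rangle$ and $\zeta_{\perp}$ the orthogonal component, noting $\zeta_{\parallel}^2+\|\zeta_{\perp}\|^2\le c_t^2$; then the elementary inequality $\cosh\sqrt{a^2+b^2}\le\cosh a\,\cosh b$ (true since the Taylor coefficients obey $\binom{2m}{2j}\ge\binom{m}{j}$, e.g.\ by Vandermonde) converts the problem into controlling $\mathbb{E}[\cosh(h(\|x\|+\zeta_{\parallel}))\cosh(h\|\zeta_{\perp}\|)\mid\mathcal{F}_{t-1}]$, where $\zeta_{\parallel}$ is a scalar zero-mean increment bounded by $c_t$ and the effect of $\zeta_{\perp}$ is extremized subject to $\|\zeta_{\perp}\|^2\le c_t^2-\zeta_{\parallel}^2$; a convexity/Hoeffding-lemma argument on the resulting one-variable quantity then produces the factor $e^{h^2c_t^2/2}$. (For $x=0$ the inequality degenerates to Hoeffding's lemma, $\mathbb{E}[\cosh(h\|\zeta\|)\mid\mathcal{F}_{t-1}]\le e^{h^2c_t^2/2}$.) An equivalent and perhaps cleaner route is the Kallenberg--Sztencel reduction: every Hilbert-space martingale admits, on an enlargement, a two-dimensional martingale with the same norm process and no larger increments, so it is enough to prove the one-step estimate in $\mathbb{R}^2$, where it can be checked by a direct (if slightly tedious) computation.

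\textbf{Conclusion.} Granting the one-step estimate, $M_t:=\cosh(h\|S_t\|)\exp\!\bigl(-\tfrac{h^2}{2}\sum_{\tau\le t}c_\tau^2\bigr)$ is a nonnegative supermartingale with $M_0=1$, whence $\mathbb{E}[\cosh(h\|S_T\|)]\le e^{h^2V/2}$ and therefore $P(\|S_T\|\ge\lambda)\le 2\exp\!\bigl(\tfrac{h^2}{2}V-h\lambda\bigr)$. Taking $h=\lambda/V$ gives $P(\|S_T\|\ge\lambda)\le 2e^{-\lambda^2/(2V)}$; since $2e^{-\lambda^2/(2V)}\le 4e^{-\lambda^2/(4V)}$ for every $\lambda\ge0$, this implies the stated bound with the slightly looser constants $4$ and $4V$ that are convenient elsewhere in the paper.

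\textbf{Main obstacle.} Everything except the one-step estimate is routine bookkeeping. The genuine difficulty is that inequality, because it is exactly the point where the Hilbert geometry --- equivalently, the $2$-smoothness of $\|\cdot\|^2$, or the relation $\cosh\sqrt{a^2+b^2}\le\cosh a\cosh b$ --- must be used to prevent the coupling between the radial increment $\zeta_{\parallel}$ and the orthogonal increment $\zeta_{\perp}$ from inflating the constant (in particular, to a dimension-dependent one). Getting that constant to come out cleanly --- or, alternatively, carrying out the two-dimensional reduction in full --- is the delicate part, and it is precisely the content of \cite[Theorem 3.5]{pinelis1994}, which one may simply cite.
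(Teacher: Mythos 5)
The paper offers no proof of this proposition at all: it is imported verbatim as Theorem~3.5 of \cite{pinelis1994}, so there is nothing internal to compare your argument against. Your outline is a faithful reconstruction of how Pinelis's proof actually goes, and its scaffolding is sound: the Chernoff reduction via $\cosh u\ge\tfrac12 e^{u}$, the Hilbert decomposition $\|x+\zeta\|^2=(\|x\|+\zeta_{\parallel})^2+\|\zeta_{\perp}\|^2$, the inequality $\cosh\sqrt{a^2+b^2}\le\cosh a\,\cosh b$ (your coefficient comparison $\binom{m}{j}\le\binom{2m}{2j}$ is correct), and the supermartingale/optimization bookkeeping are all right, and you even recover the sharper constant $2e^{-\lambda^2/(2V)}$, which dominates the stated $4e^{-\lambda^2/(4V)}$, consistent with what Pinelis actually proves. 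The one place where your write-up is not self-contained is exactly the place you flag: the one-step contraction $\mathbb{E}[\cosh(h\|S_t\|)\mid\mathcal{F}_{t-1}]\le\cosh(h\|S_{t-1}\|)e^{h^2c_t^2/2}$. After applying $\cosh\sqrt{a^2+b^2}\le\cosh a\cosh b$ one is left with the cross term $\mathbb{E}[\sinh(h\zeta_{\parallel})\cosh(h\|\zeta_{\perp}\|)\mid\mathcal{F}_{t-1}]$, which does not vanish merely because $\mathbb{E}[\zeta_{\parallel}\mid\mathcal{F}_{t-1}]=0$, since $\sinh$ is nonlinear and $\zeta_{\perp}$ is correlated with $\zeta_{\parallel}$; controlling it is the genuine content of Pinelis's theorem, and your proposal ultimately defers it to the citation. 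Given that the paper defers the entire statement to the same citation, this is an acceptable (indeed more informative) treatment, but be aware that as written it is a proof outline with the hardest lemma quoted rather than a complete proof.
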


    


\begin{proposition} \label{prop:stochastic}
(Jin et al. \cite{jin2019short}, Corollary 7) Let $\zeta_{1}, \dots, \zeta_{t} \in \mathbb{R}^{d}$ be a vector-valued martingale difference sequence w.r.t. a filtration $\{\mathcal{F}_{t}\}$, i.e. for each $\tau \in 1, \dots, t$, we have $\mathbb{E}[\zeta_{\tau}|\mathcal{F}_{\tau-1}] = 0$. Suppose that $\mathbb{E}[\exp(\|\zeta_{\tau}\|^{2}/c_{\tau}^{2})] \leq \exp(1)$. Then there exists a absolute constant $c$ such that, for any $\delta > 0$, with probability at least $1-\delta$, 
\begin{equation}
    \|\sum_{\tau =1}^{T}\zeta_{\tau}\| \leq c\cdot \sqrt{\sum_{\tau=1}^{T} c_{\tau}^2 \log \frac{2 d}{\delta}}
\end{equation}
\end{proposition}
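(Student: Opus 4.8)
The statement is a vector-valued Azuma--Hoeffding inequality: for a martingale difference sequence $\{\zeta_{\tau}\}$ in $\mathbb{R}^{d}$ whose increments have $c_{\tau}$-sub-Gaussian norm, the partial sum has Euclidean norm of order $\sqrt{(\sum_{\tau}c_{\tau}^{2})\log(2d/\delta)}$ with probability $1-\delta$. My plan is to reduce to the one-dimensional Azuma--Hoeffding bound along fixed directions and then pass back to the norm, and to identify the dimension dependence as the delicate point; since the statement is quoted from Jin et al.~\cite{jin2019short}, the proposal ultimately amounts to importing their argument.

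First I would reduce to scalars. Since $\bigl\|\sum_{\tau=1}^{t}\zeta_{\tau}\bigr\| = \sup_{u \in S^{d-1}}\bigl\langle u, \sum_{\tau=1}^{t}\zeta_{\tau}\bigr\rangle$, fix a $\tfrac12$-net $\mathcal{N}$ of the unit sphere with $|\mathcal{N}| \le 5^{d}$, so that $\bigl\|\sum_{\tau}\zeta_{\tau}\bigr\| \le 2\max_{u \in \mathcal{N}}\bigl\langle u, \sum_{\tau}\zeta_{\tau}\bigr\rangle$. For a fixed unit vector $u$, the scalars $X_{\tau}^{u} := \langle u,\zeta_{\tau}\rangle$ form a martingale difference sequence with $|X_{\tau}^{u}| \le \|\zeta_{\tau}\|$; hence the sub-Gaussian-norm hypothesis $\mathbb{E}[\exp(\|\zeta_{\tau}\|^{2}/c_{\tau}^{2})\mid\mathcal{F}_{\tau-1}] \le e$ (Markov) gives a sub-Gaussian tail for $X_{\tau}^{u}$, and together with $\mathbb{E}[X_{\tau}^{u}\mid\mathcal{F}_{\tau-1}]=0$ this yields a conditional MGF bound $\mathbb{E}[\exp(\lambda X_{\tau}^{u})\mid\mathcal{F}_{\tau-1}] \le \exp(C\lambda^{2}c_{\tau}^{2})$ for an absolute constant $C$ and all $\lambda$. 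Iterating this bound through the tower property over $\tau=t,t-1,\dots,1$ shows $\sum_{\tau}X_{\tau}^{u}$ is sub-Gaussian with variance proxy $2C\sum_{\tau}c_{\tau}^{2}$, so $\mathbb{P}\bigl(\sum_{\tau}X_{\tau}^{u} \ge s\bigr) \le \exp\bigl(-s^{2}/(4C\sum_{\tau}c_{\tau}^{2})\bigr)$. A union bound over $\mathcal{N}$ and the choice of $s$ equating the right-hand side to $\delta$ then gives a high-probability bound on $\bigl\|\sum_{\tau}\zeta_{\tau}\bigr\|$.

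The main obstacle is the dimension dependence. The crude net-plus-union-bound route above pays $\log|\mathcal{N}| = \Theta(d)$ in the exponent, producing a bound of order $\sqrt{(\sum_{\tau}c_{\tau}^{2})(d+\log(1/\delta))}$, whereas the claimed bound has only $\log(2d/\delta)$ inside the square root --- a gap of a factor $\sqrt{d}$. To recover the sharp dependence one must use the full strength of the norm-sub-Gaussian hypothesis rather than only its directional consequence. Concretely, I would instead control the exponential moment of $\|Z_{k}\|^{2}$, where $Z_{k} := \sum_{\tau\le k}\zeta_{\tau}$, by a recursion in $k$: expand $\|Z_{k}\|^{2} = \|Z_{k-1}\|^{2} + 2\langle Z_{k-1},\zeta_{k}\rangle + \|\zeta_{k}\|^{2}$, condition on $\mathcal{F}_{k-1}$, bound the cross term via directional sub-Gaussianity (variance proxy proportional to $\|Z_{k-1}\|^{2}c_{k}^{2}$) and the last term via $\mathbb{E}[\exp(\|\zeta_{k}\|^{2}/c_{k}^{2})\mid\mathcal{F}_{k-1}]\le e$ directly, and choose the normalizing denominators $r_{k}\asymp\sum_{\tau\le k}c_{\tau}^{2}$ so that the recursion closes with at most a $\log d$-sized multiplicative loss; finishing with Markov's inequality converts the resulting bound $\mathbb{E}[\exp(\|Z_{t}\|^{2}/(c'\sum_{\tau}c_{\tau}^{2}))] \lesssim d$ into the stated tail. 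This recursion-with-squared-norm argument is exactly the content of Jin et al.~\cite{jin2019short}, Corollary~7, so within this paper the proof is simply a citation.
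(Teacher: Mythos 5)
Your proposal is consistent with the paper, which does not prove this proposition but simply imports it as Corollary~7 of Jin et al.~\cite{jin2019short}; you correctly note that within this paper the ``proof'' is a citation. Your sketch also accurately identifies the key subtlety --- that a naive $\tfrac12$-net plus union bound would incur a $\Theta(d)$ rather than $\Theta(\log d)$ penalty --- and the recursion on the exponential moment of $\|Z_k\|^2$ you describe is indeed the mechanism behind the cited result, so there is nothing to correct.
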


This proposition was also used in previous literature including \cite{zhang2005} and \cite{xie2020}. It is common to use such martingale inequality to obtain some high-probability results recently.


\section{Experiment details}\label{sec:exp-details}
In this section, we include more details about the numerical experiments in Section~\ref{sec:experiments}. For completeness, we briefly introduce the update rules of aR-IP-SeG in \cite{jalilzadeh2022stochastic} and DBGD in \cite{gong2021bi}. In the following, we use the notation $\Pi_{\mathcal{Z}}(\cdot)$ to denote the Euclidean projection onto the set $\mathcal{Z}$.\\
The aR-IP-SeG algorithm is given by,
\begin{equation}
\begin{aligned}
    &\vy_{t+1} = \Pi_{\mathcal{Z}}(\vx_{t} - \gamma_{t}(\nabla \tf(\vx_{t}, \theta_{t}))+\rho_{t} \nabla \tg(\vx_{t}, \xi_{t})) \\
    &\vx_{t+1} = \Pi_{\mathcal{Z}}(\vx_{t} - \gamma_{t}(\nabla \tf(\vy_{t}, \theta_{t}^{'}))+\rho_{t} \nabla \tg(\vy_{t}, \xi_{t}^{'}))\\
    &\Gamma_{t+1}  = \Gamma_{t} + (\gamma_{t}\rho_{t})^{r}\\
    &\Bar{\vy}_{t+1} = \frac{\Gamma_{t}\Bar{\vy}_{t}+(\gamma_{t}\rho_{t})^{r}\vy_{t+1}}{\Gamma_{t+1}}
\end{aligned}
\end{equation}
where $\gamma_{t}$ is the stepsize, $\rho_{t}$ is the regularization parameter, and $\Bar{\vy}_{T}$ is the output of the algorithm. In this experiment, we choose $\gamma_{t} = \gamma_{0}/(t+1)^{3/4}$ and $\rho_{t} = \rho_{0}(t+1)^{1/4}$ for some constants $\gamma_{0}$ and $\rho_{0}$.\\
The DBGD-sto is a stochastic version of DBGD, which simply replaces the gradients in DBGD with stochastic gradients. Although the stochastic version of DBGD does not have a theoretical guarantee, it has been used to solve stochastic simple bilevel optimization problems in \cite{gong2021bi}, which worked pretty well empirically. Hence, we use it as a baseline for solving stochastic simple bilevel problems and compare it with our proposed algorithms. The DBGD algorithm is given by

$$
\mathbf{x}_{k+1}=\mathbf{x}_k-\gamma_k\left(\nabla f\left(\mathbf{x}_k\right)+\lambda_k \nabla g\left(\mathbf{x}_k\right)\right)
$$
where $\gamma_k$ is the stepsize and we set $\lambda_k$ as

$$
\lambda_k=\max \left\{\frac{\phi\left(\mathbf{x}_k\right)-\left\langle\nabla f\left(\mathbf{x}_k\right), \nabla g\left(\mathbf{x}_k\right)\right\rangle}{\left\|\nabla g\left(\mathbf{x}_k\right)\right\|^2}, 0\right\} \quad \text { and } \quad \phi(\mathbf{x})=\min \left\{\alpha(g(\mathbf{x})-\hat{g}), \beta\|\nabla g(\mathbf{x})\|^2\right\}
$$

where $\alpha$ and $\beta$ are hyperparameters and $\hat{g}$ is a lower bound of $g^{*}$. In this experiment, we choose $\hat{g} = 0$. We also note that \cite{gong2021bi} only considered unconstrained simple bilevel optimization, i.e. $\mathcal{Z} = \mathbb{R}^{d}$. We further project $\vx_{t}$ onto $\mathcal{Z}$ for each iteration to ensure the constraints are satisfied.

\subsection{Over-parameterized regression}
\textbf{Dataset generation.} The original Wikipedia Math Essential dataset \cite{rozemberczki2021pytorch} composes of a data matrix of size $1068 \times 731$. We randomly select one of the columns as the outcome vector $\vb \in \mathbb{R}^{1068}$ and the rest to be a new matrix $\mathbf{A} \in \mathbb{R}^{1068 \times 731}$. We set constraint parameter $\lambda = 10$ in this experiment. \\
\textbf{Initialization.} We run the algorithm, SPIDER-FW \cite{yurtsever2019}, with stepsize chosen as $\gamma_{t} = 0.1/(t+1)$ on the lower-level problem in \eqref{eq:stochastic_simple_bilevel}. We terminate the process to get $\vx_{0}$ as the initial point for both SBCGI \ref{alg:STORM} and SBCGF \ref{alg:SPIDER} after $10^5$ stochastic oracle queries. \\
\textbf{Implementation details.} We query stochastic oracle $9\times10^5$ times with stepsize $\gamma_{t} = 0.01/(t+1)$ and $\gamma = 10^{-5}$ for SBCGI \ref{alg:STORM} and SBCGF \ref{alg:SPIDER} with $K_{t} = 10^{-4}/\sqrt{t+1}$, respectively. In each iteration, we need to solve the following subproblem induced by the methods,
\begin{equation}
\min _{\mathbf{s}}\left\langle\nabla f\left(\boldsymbol{\beta}_k\right), \mathbf{s}\right\rangle \quad \text { s.t. } \quad\|\mathbf{s}\|_1 \leq \lambda,\left\langle\nabla g\left(\boldsymbol{\beta}_k\right), \mathbf{s}-\boldsymbol{\beta}_k\right\rangle \leq g\left(\boldsymbol{\beta}_0\right)-g\left(\boldsymbol{\beta}_k\right) .
\end{equation}
Introduce $\vs^{+}, \vs^{-} \geq 0$ such that $\mathbf{s}=\mathbf{s}^{+}-\mathbf{s}^{-}$. Then we can reformulate the problem above as follows,
\begin{equation}
\begin{aligned}
\min _{\mathbf{s}^{+}, \mathbf{s}^{-}} & \left\langle\nabla f\left(\boldsymbol{\beta}_k\right), \mathbf{s}^{+}-\mathbf{s}^{-}\right\rangle \\
\text {s.t. } & \mathbf{s}^{+}, \mathbf{s}^{-} \geq 0,\left\langle\mathbf{s}^{+}, \mathbf{1}\right\rangle+\left\langle\mathbf{s}^{-}, \mathbf{1}\right\rangle \leq \lambda,\left\langle\nabla g\left(\boldsymbol{\beta}_k\right), \mathbf{s}^{+}-\mathbf{s}^{-}-\boldsymbol{\beta}_k\right\rangle \leq g\left(\boldsymbol{\beta}_0\right)-g\left(\boldsymbol{\beta}_k\right),
\end{aligned}  
\end{equation}
where $1 \in \mathbb{R}^d$ is the all-one vector.\\
For aR-IP-SeG, we choose $\gamma_{0} = 10^{-7}$ and $\rho_{0} = 10^{3}$. For DBGD, we set $\alpha = \beta = 1$ and $\gamma_{t} = 10^{-6}$.

\subsection{Dictionary learning}
\textbf{Dataset generation.} We generate 500 sparse coefficient vectors $\left\{\mathbf{x}_i\right\}_{i=1}^{250}$ and $\left\{\mathbf{x}_k^{\prime}\right\}_{k=1}^{250}$ with 5 random nonzero entries, whose absolute values are drawn uniformly from $[0.2,1]$. The entries of the random noise vectors $\left\{\mathbf{n}_i\right\}_{i=1}^{250}$ and $\left\{\mathbf{n}_k^{\prime}\right\}_{k=1}^{250}$ are drawn from i.i.d. Gaussian distribution with mean 0 and standard deviation 0.01.\\
\textbf{Initialization.} We use a similar initialization procedure as \cite{jiangconditional}, which consists of two phases. In the first phase, we run the standard Frank-Wolfe algorithm on both the variables $\mathbf{D} \in \mathbb{R}^{25 \times 40}$ and $\mathbf{X} \in \mathbb{R}^{40 \times 250}$ for $10^4$ iterations with the stepsize $\gamma_{t} = 1 / \sqrt{t+1}$. Next, in the second phase, we fix the variable $\mathbf{X}$ and only update $\mathbf{D}$ using the Frank-Wolfe algorithm with exact line search for additional $10^4$ iterations to obtain $\hat{\mathbf{D}}$ and $\hat{\mathbf{X}}$ as the initial point for the full bilevel problem.\\
\textbf{Implementation Details.} We choose $\delta=3$ in both problems \eqref{eq:Dictionary_learning}. To be fair, all four algorithms start from the same initial point. We slightly modify the initial point by letting $\tilde{\mathbf{D}} \in \mathbb{R}^{25 \times 50}$ be the concatenation of $\hat{\mathbf{D}} \in \mathbb{R}^{25 \times 40}$ and 10 columns of all zeros vectors. Furthermore, we initialize another variable $\tilde{\mathbf{X}}$ randomly by choosing its entries from a standard Gaussian distribution and then normalizing each column to have a $\ell_1$-norm of $\delta$. We choose the stepsize as $\gamma_t=0.1 / (t+1)^{2/3}$ and $\gamma= 10^{-3}$ for our SBCGI \ref{alg:STORM} and SBCGF\ref{alg:SPIDER} with $K_{t} = 0.01/(t+1)^{1/3}$, respectively. Empirically, we observe that taking one sample per iteration leads to a very unstable process in this problem. In this case, we choose a mini-batch of size 8 for SBCGI, aR-IP-SeG, and the stochastic version of DBGD. For each iteration, we will solve the following subproblem,
\begin{equation}
\min _{\tilde{\mathbf{D}}}\left\langle\nabla f_{\tilde{\mathbf{D}}}\left(\tilde{\mathbf{D}}_k, \tilde{\mathbf{X}}_k\right), \tilde{\mathbf{D}}\right\rangle \quad \text { s.t. } \quad\left\|\tilde{\mathbf{d}}_i\right\|_2 \leq 1,\left\langle\nabla g\left(\tilde{\mathbf{D}}_k\right), \tilde{\mathbf{D}}-\tilde{\mathbf{D}}_k\right\rangle \leq g\left(\tilde{\mathbf{D}}_0\right)-g\left(\tilde{\mathbf{D}}_k\right) 
\end{equation}
The above problem can be reformulated by using the KKT condition, which is equivalent to get a root of the following one-dimensional nonlinear equation involving $\lambda \geq 0$ :
\begin{equation} \label{eq:sub_dictionary}
\tilde{\mathbf{D}}=\Pi_{\mathcal{Z}}\left(\nabla f_{\tilde{\mathbf{D}}}\left(\tilde{\mathbf{D}}_k, \tilde{\mathbf{X}}_k\right)+\lambda \nabla g\left(\tilde{\mathbf{D}}_k\right)\right), \quad\left\langle\nabla g\left(\tilde{\mathbf{D}}_k\right), \tilde{\mathbf{D}}-\tilde{\mathbf{D}}_k\right\rangle=g\left(\tilde{\mathbf{D}}_0\right)-g\left(\tilde{\mathbf{D}}_k\right)  
\end{equation}
where the projection onto $\mathcal{Z}=\left\{\tilde{\mathbf{D}} \in \mathbb{R}^{25 \times 50}:\left\|\tilde{\mathbf{d}}_i\right\|_2 \leq 1, i=1, \ldots, 50\right\}$ is equivalent to project each column on the Euclidean ball. In practice, the reformulated problem can be solved efficiently by MATLAB's root-finding solver.\\
For aR-IP-SeG, we choose $\gamma_{0} = 10^{-4}$ and $\rho_{0} = 1$. For the stochastic version of DBGD, we set 
$\alpha = \beta = 100$ and $\gamma_{t} = 5\times 10^{-3}$.

Additional plots illustrating the comparison of the studied methods in terms of runtime rather than the number of sample used are provided in Figure~\ref{fig:regression_t} and Figure~\ref{fig:dictionary_t}.

\begin{figure}
  \centering
    \vspace{-2mm}
  \subfloat[Lower-level gap]
{\includegraphics[width=0.3\textwidth]{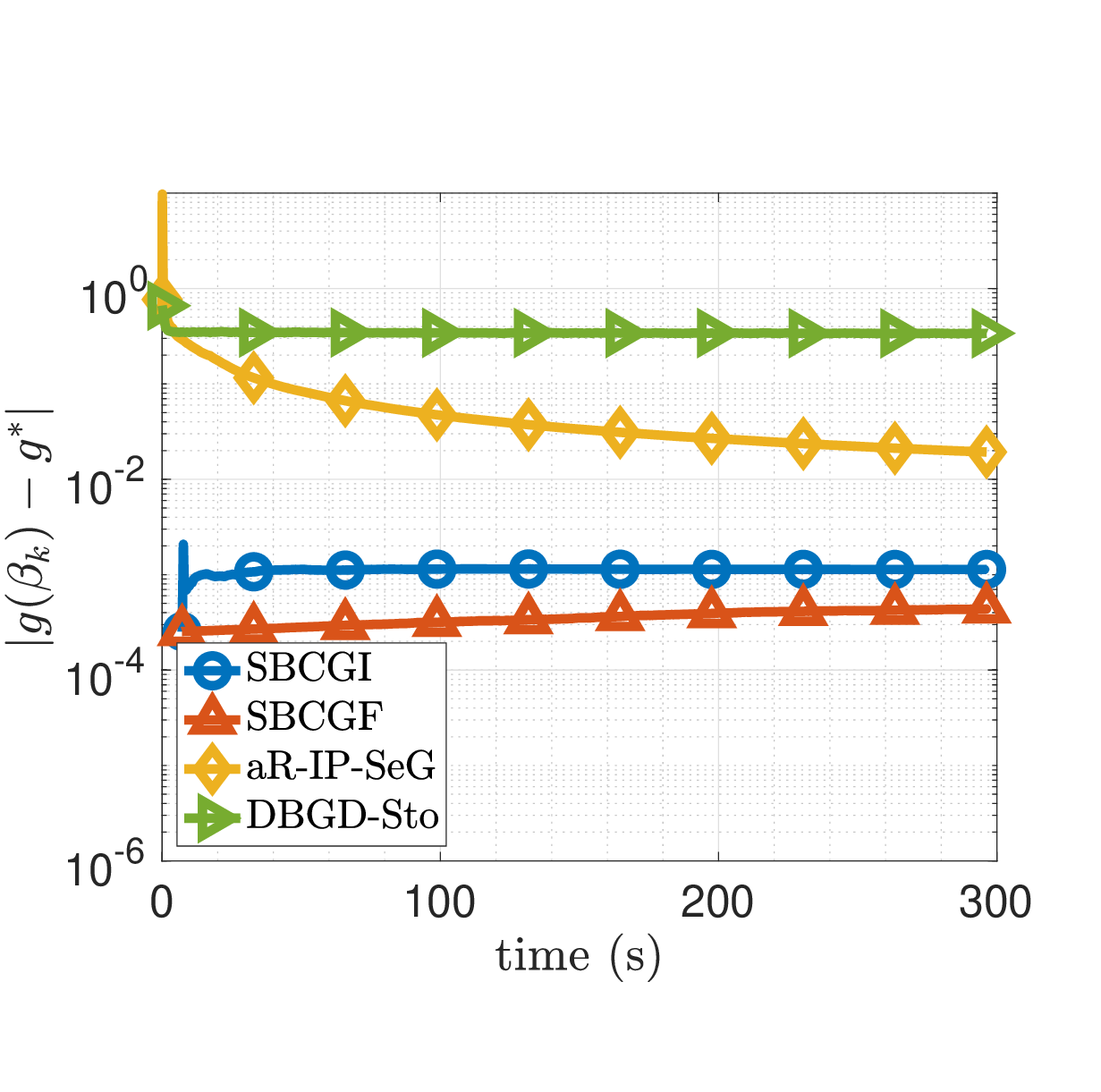}}
  \vspace{0mm}
  \subfloat[Upper-level gap]{\includegraphics[width=0.3\textwidth]{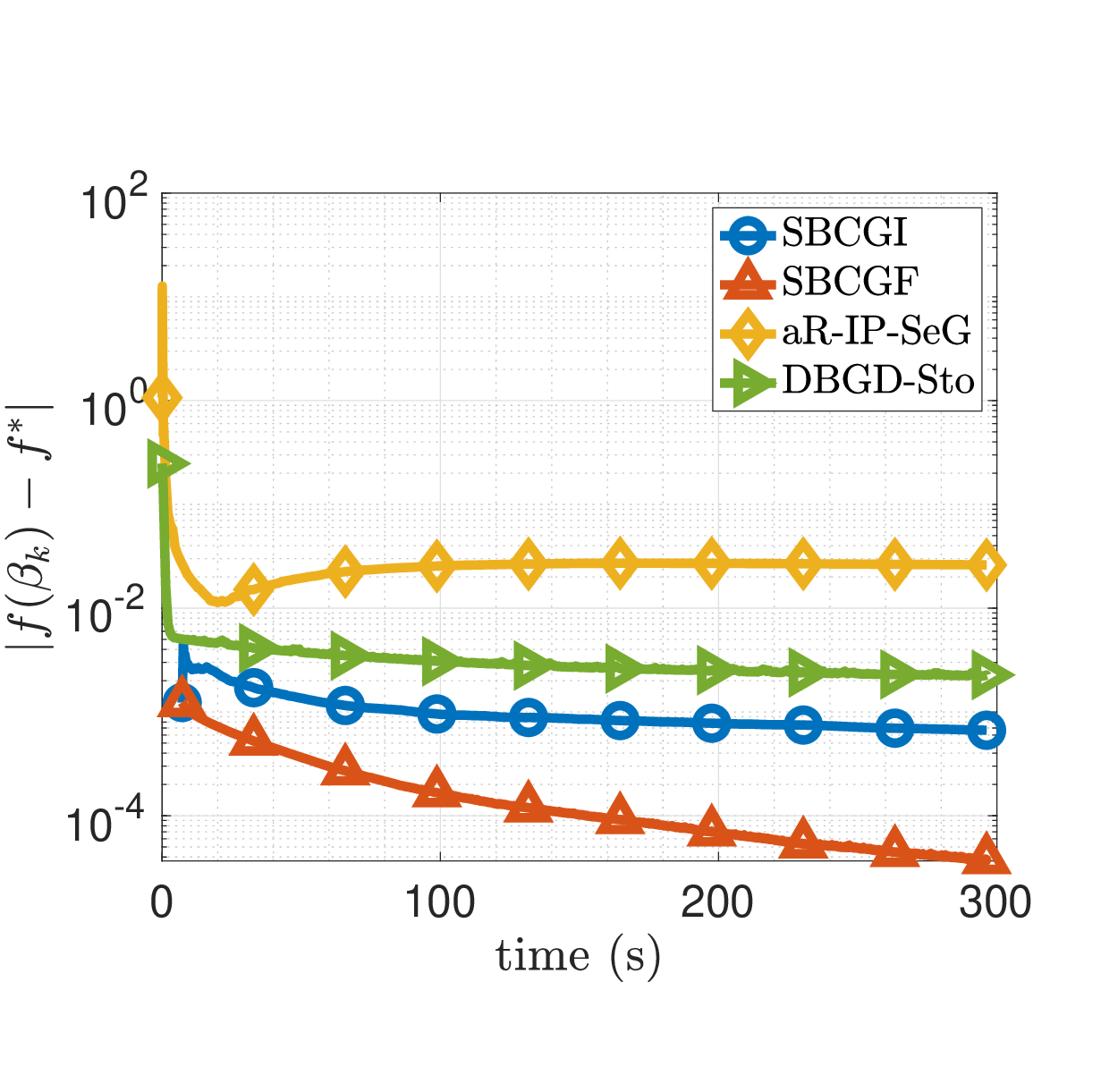}}
  \vspace{0mm}
  \subfloat[Test error]
{\includegraphics[width=0.3\textwidth]{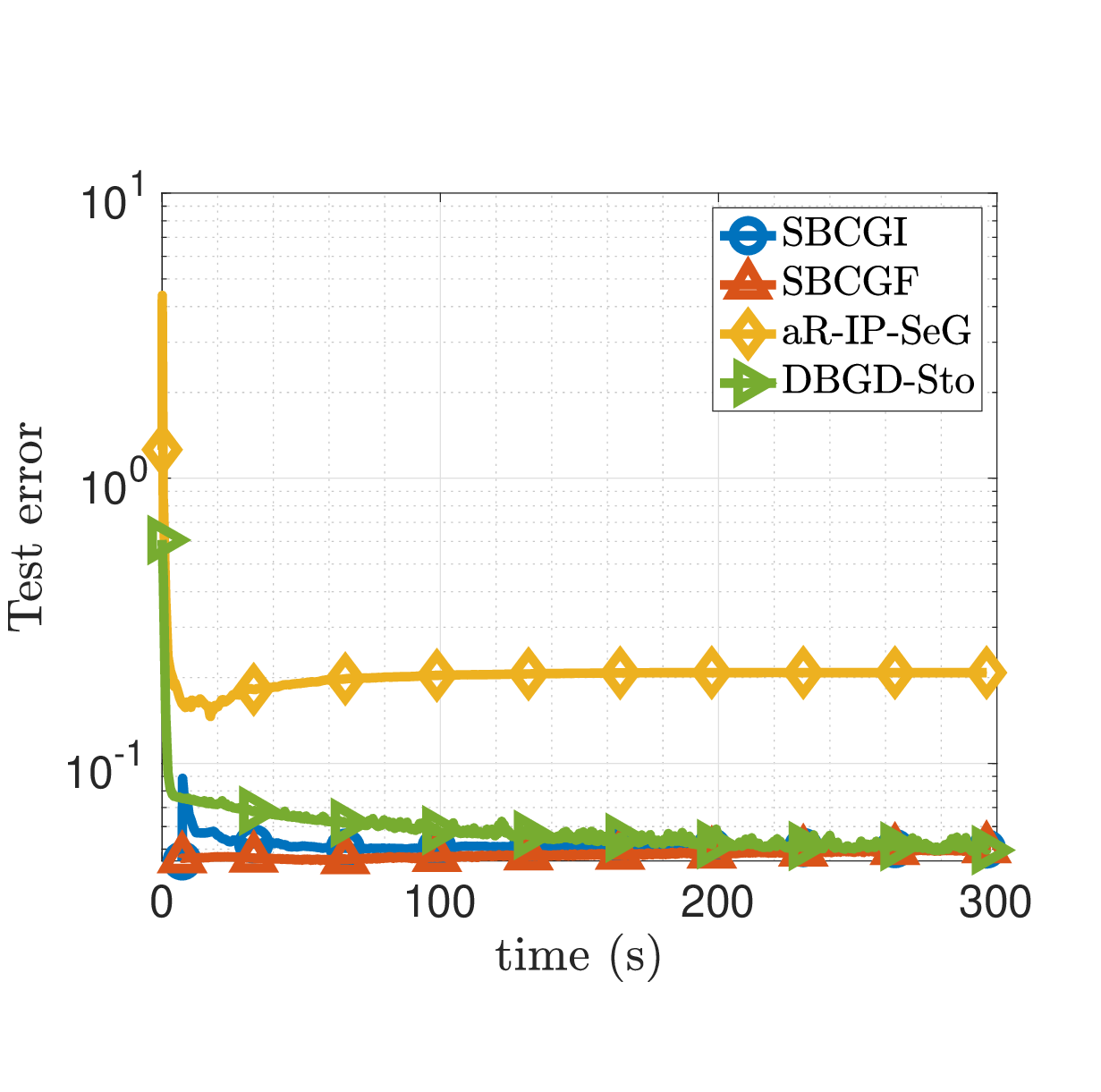}}
  \caption{Comparison of SBCGI, SBCGF, aR-IP-SeG, and DBGD-Sto for the over-parameterized regression problem}
  \label{fig:regression_t}
\end{figure}

\begin{figure}
  \centering
    \vspace{-8mm}
  \subfloat[Lower-level gap]{\includegraphics[width=0.3\textwidth]{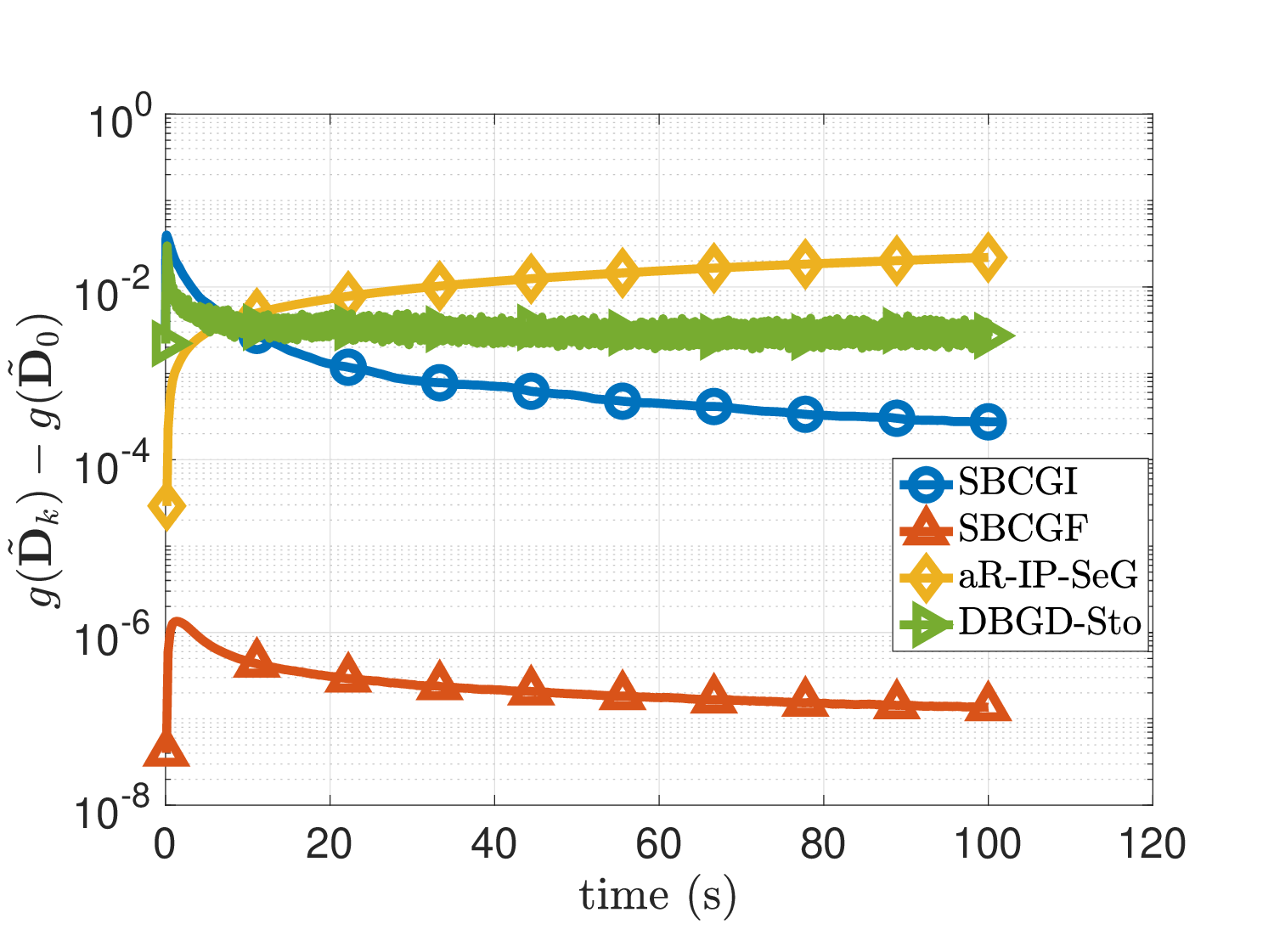}}
  \subfloat[Upper-level gap]{\includegraphics[width=0.3\textwidth]{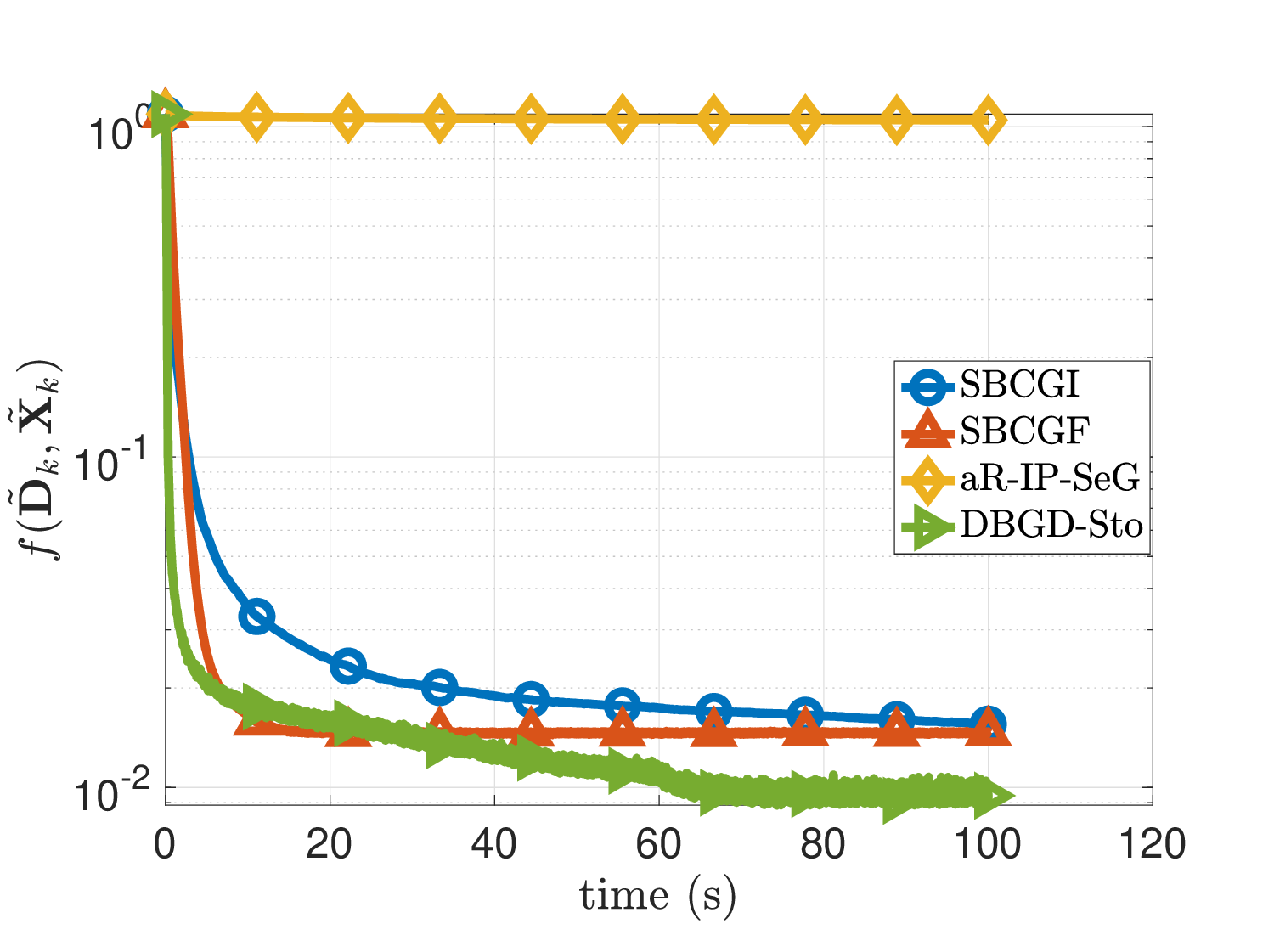}}
  \subfloat[Recovery rate]
{\includegraphics[width=0.3\textwidth]{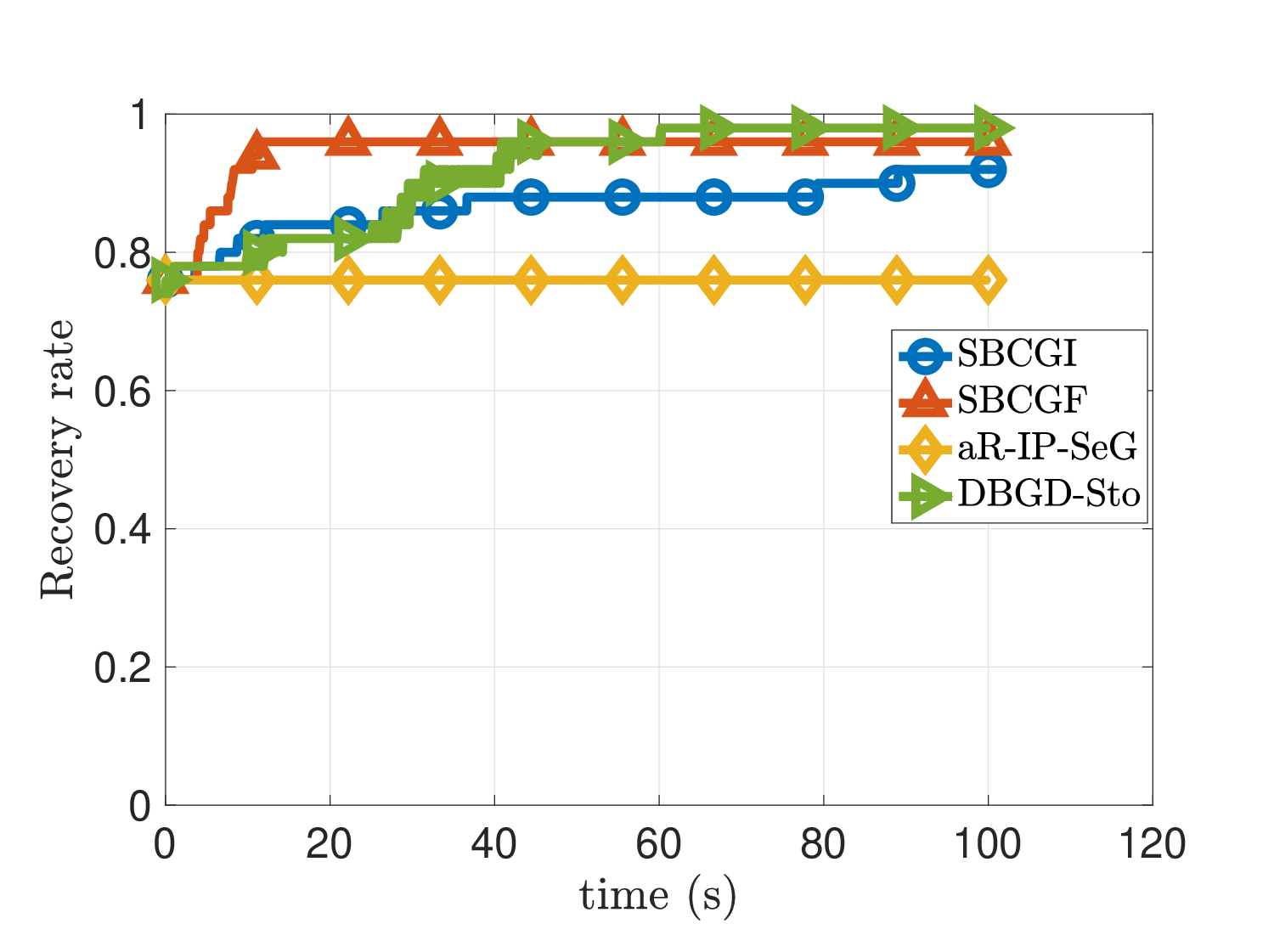}}
  \caption{
 Comparison of SBCGI, SBCGF, aR-IP-SeG, and DBGD-Sto for solving the dictionary learning problem.}
  \label{fig:dictionary_t}
  \vspace{-2mm}
\end{figure}

\subsection{Experiments with different random seeds}
We further repeat the experiment 10 times with different random seeds to see more realizations of the stochastic algorithms. The results are reported in Figure~\ref{fig:regression_CI} and Figure~\ref{fig:dictionary_10}. The solid lines denote the average statistics over 10 trials of the algorithms. While the shaded regions surrounding each line reflect the span of all the random instances involved. Figure~\ref{fig:regression_CI} and Figure~\ref{fig:dictionary_10} present similar results as Figure~\ref{fig:regression} and Figure~\ref{fig:dictionary}, which eliminates the possibility of choosing a particularly good instance.
\begin{figure}
  \centering
    \vspace{-2mm}
  \subfloat[Lower-level gap]
{\includegraphics[width=0.3\textwidth]{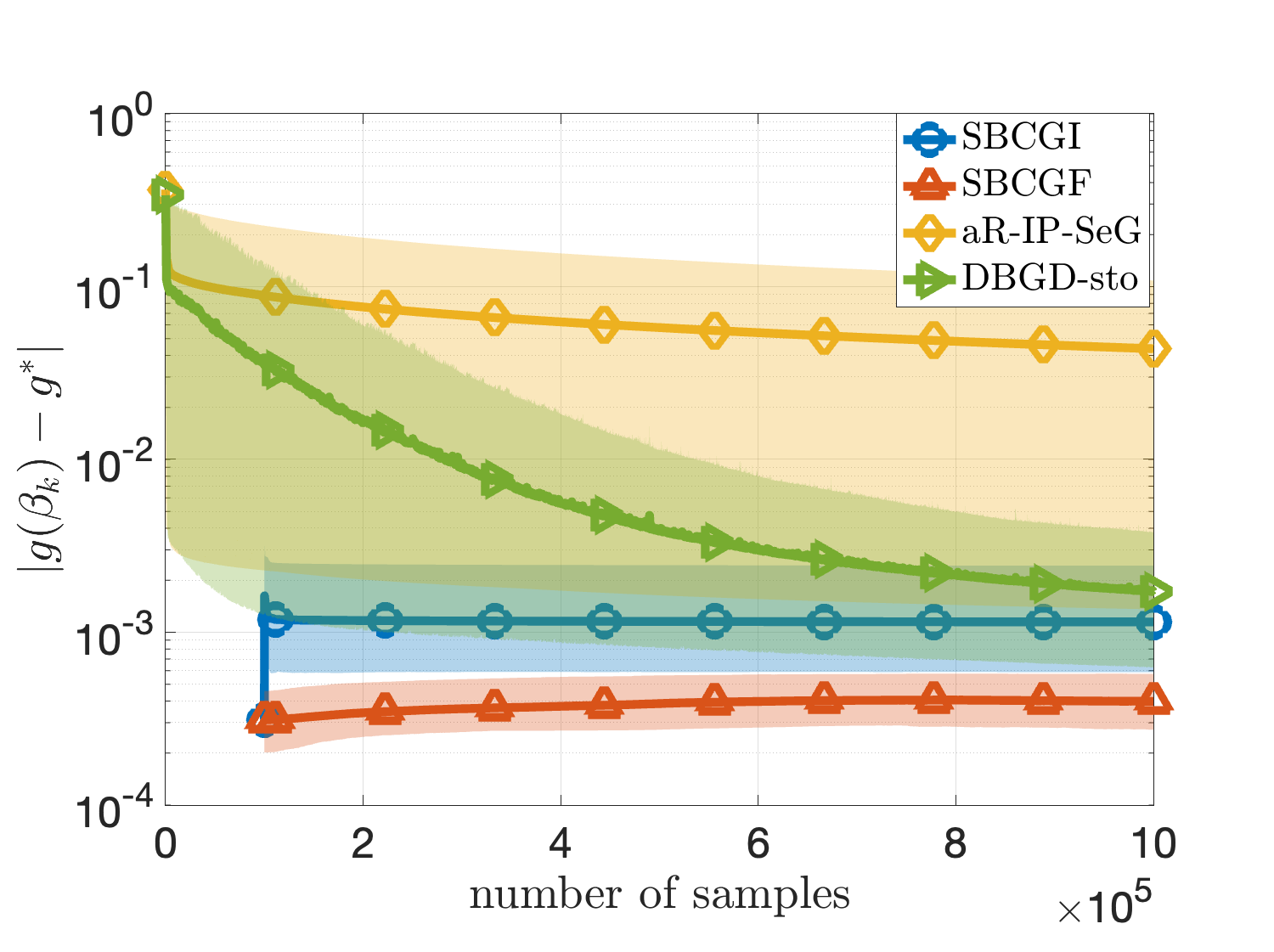}}
  \vspace{0mm}
  \subfloat[Upper-level gap]{\includegraphics[width=0.3\textwidth]{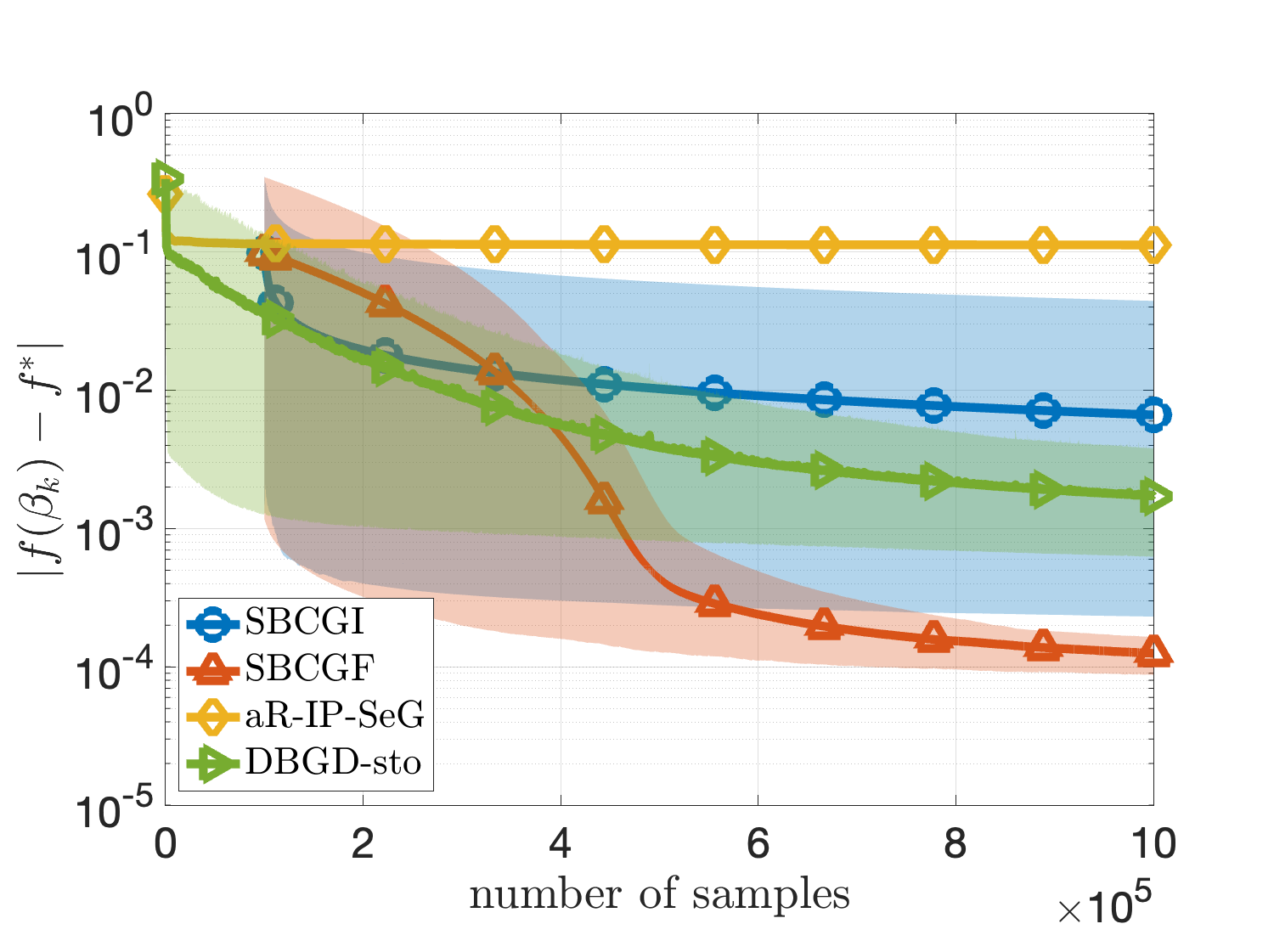}}
  \vspace{0mm}
  \subfloat[Test error]
{\includegraphics[width=0.3\textwidth]{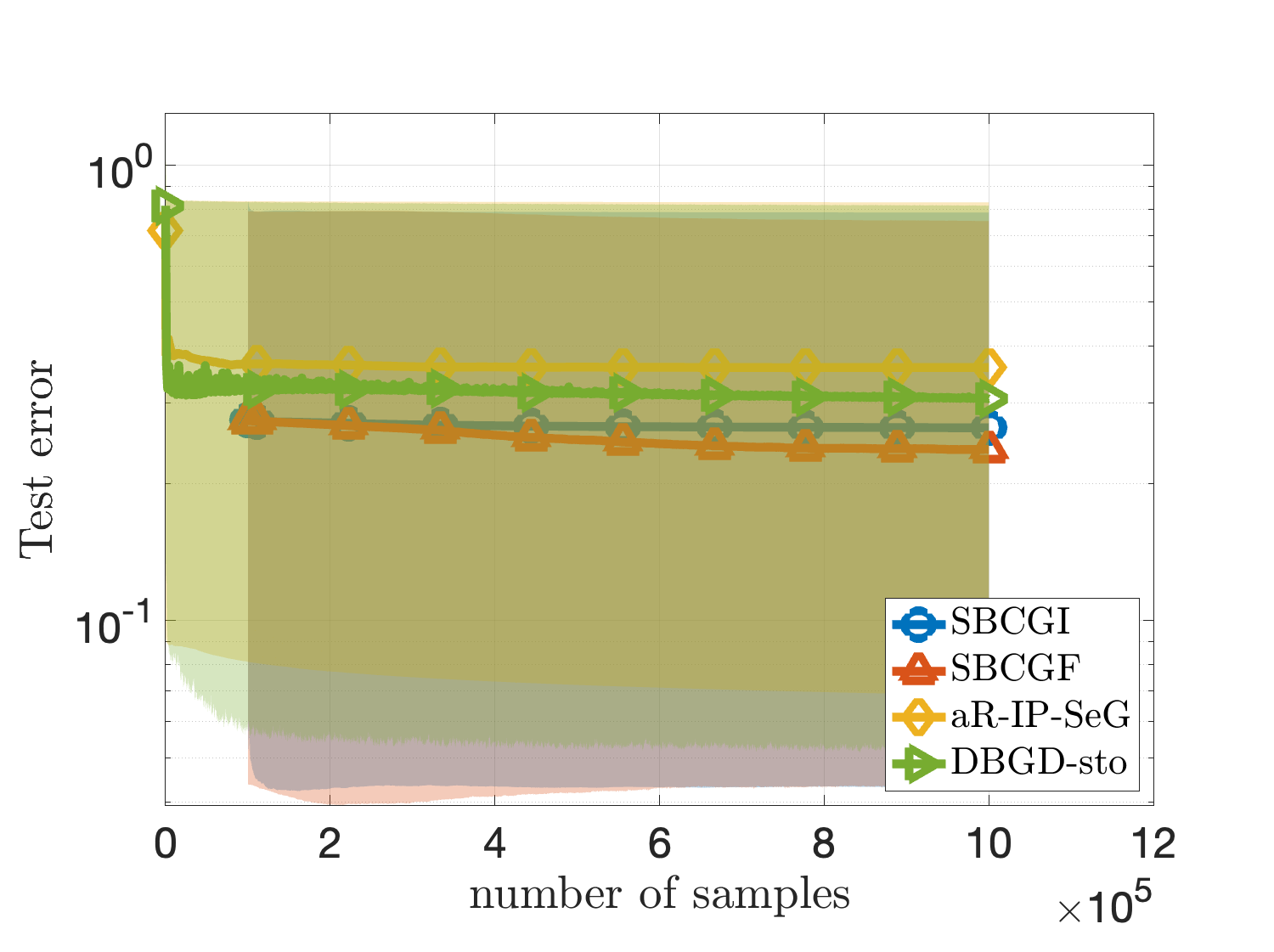}}
  \caption{Comparison of SBCGI, SBCGF, aR-IP-SeG, and DBGD-Sto for solving Problem~\eqref{eq:regression} with 10 different random seeds}
  \label{fig:regression_CI}
\end{figure}

\begin{figure}
  \centering
    \vspace{-2mm}
  \subfloat[Lower-level gap]{\includegraphics[width=0.3\textwidth]{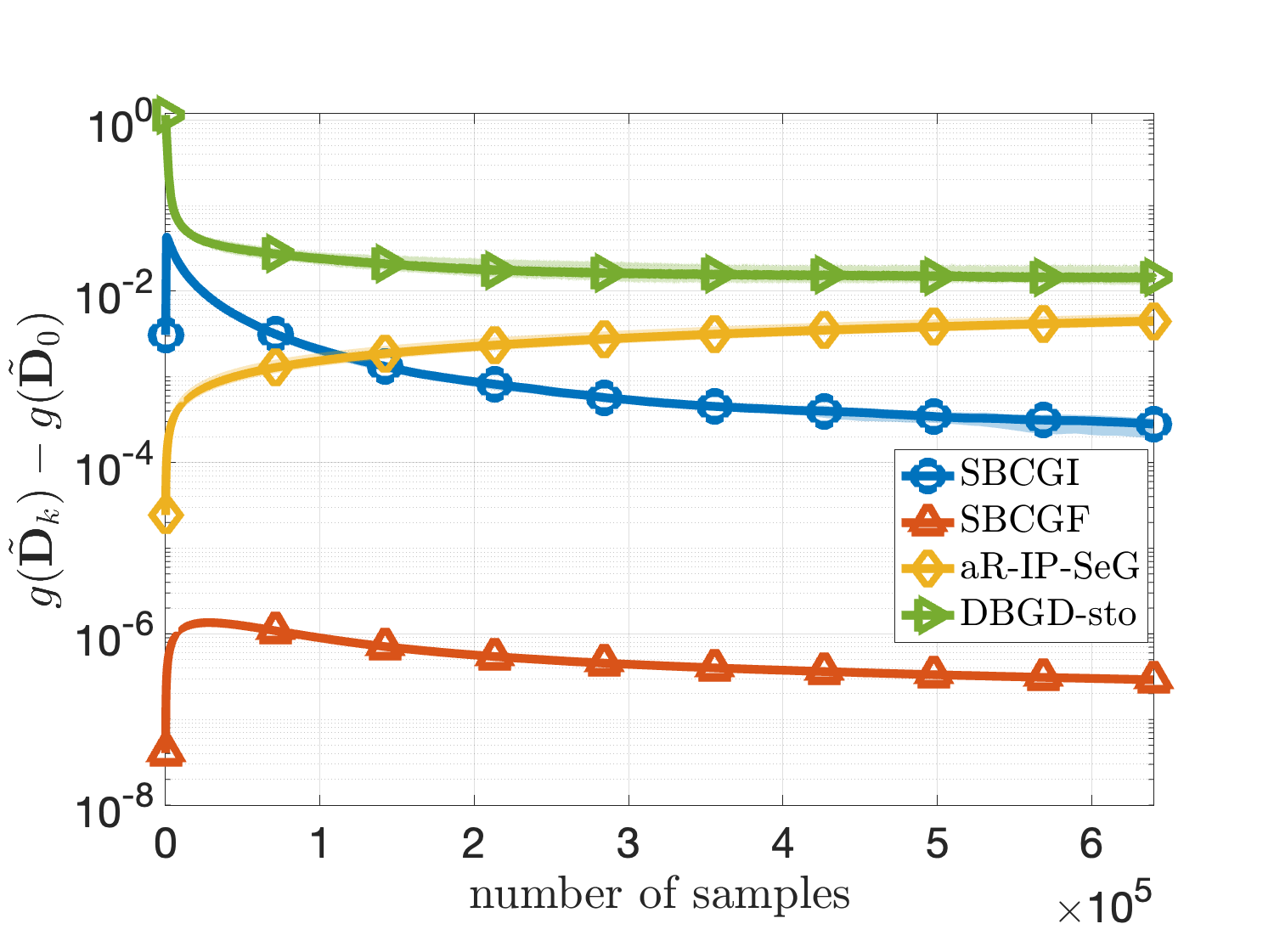}}
  \subfloat[Upper-level gap]{\includegraphics[width=0.3\textwidth]{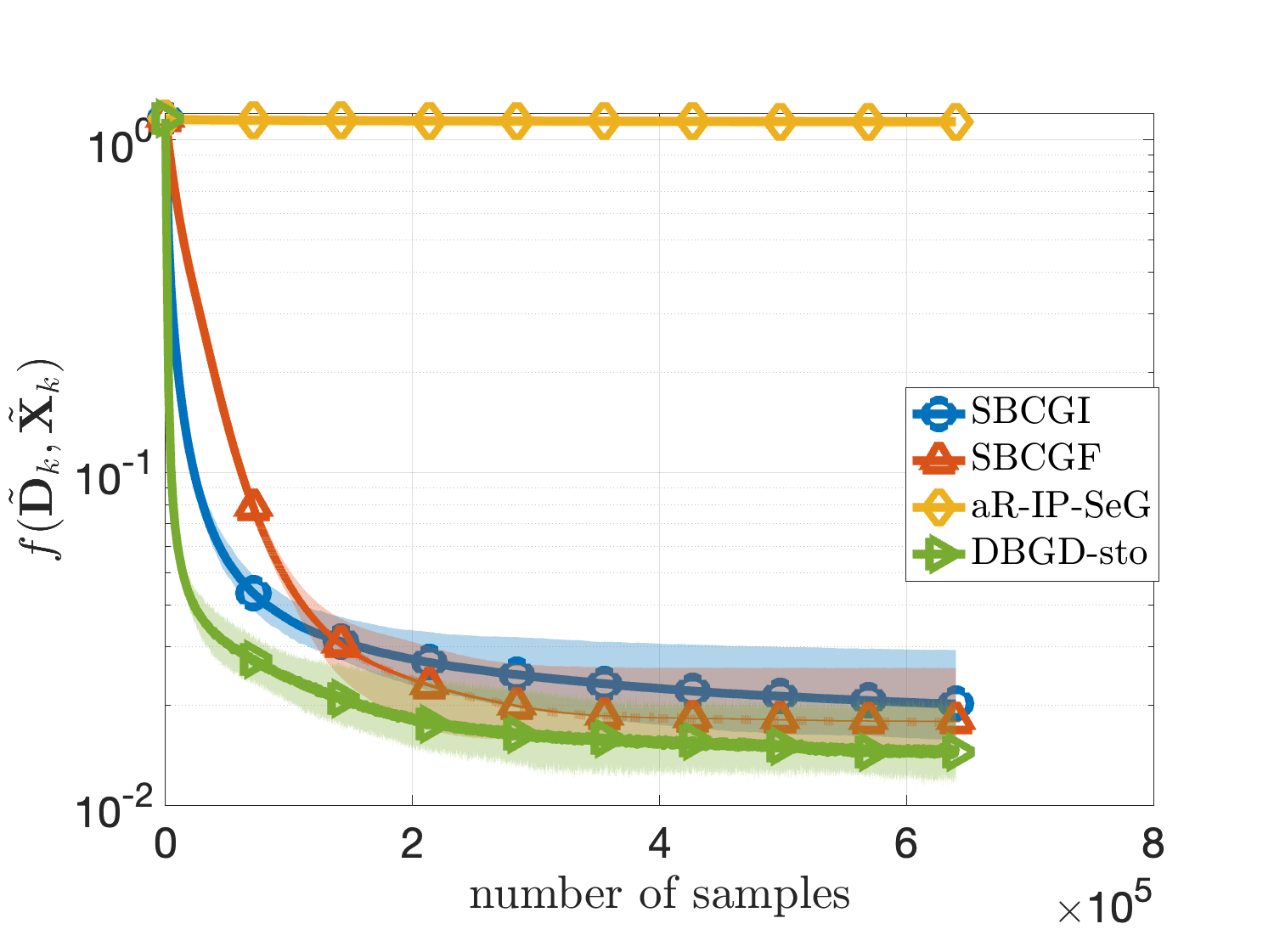}}
  \subfloat[Recovery rate]
    {\includegraphics[width=0.3\textwidth]{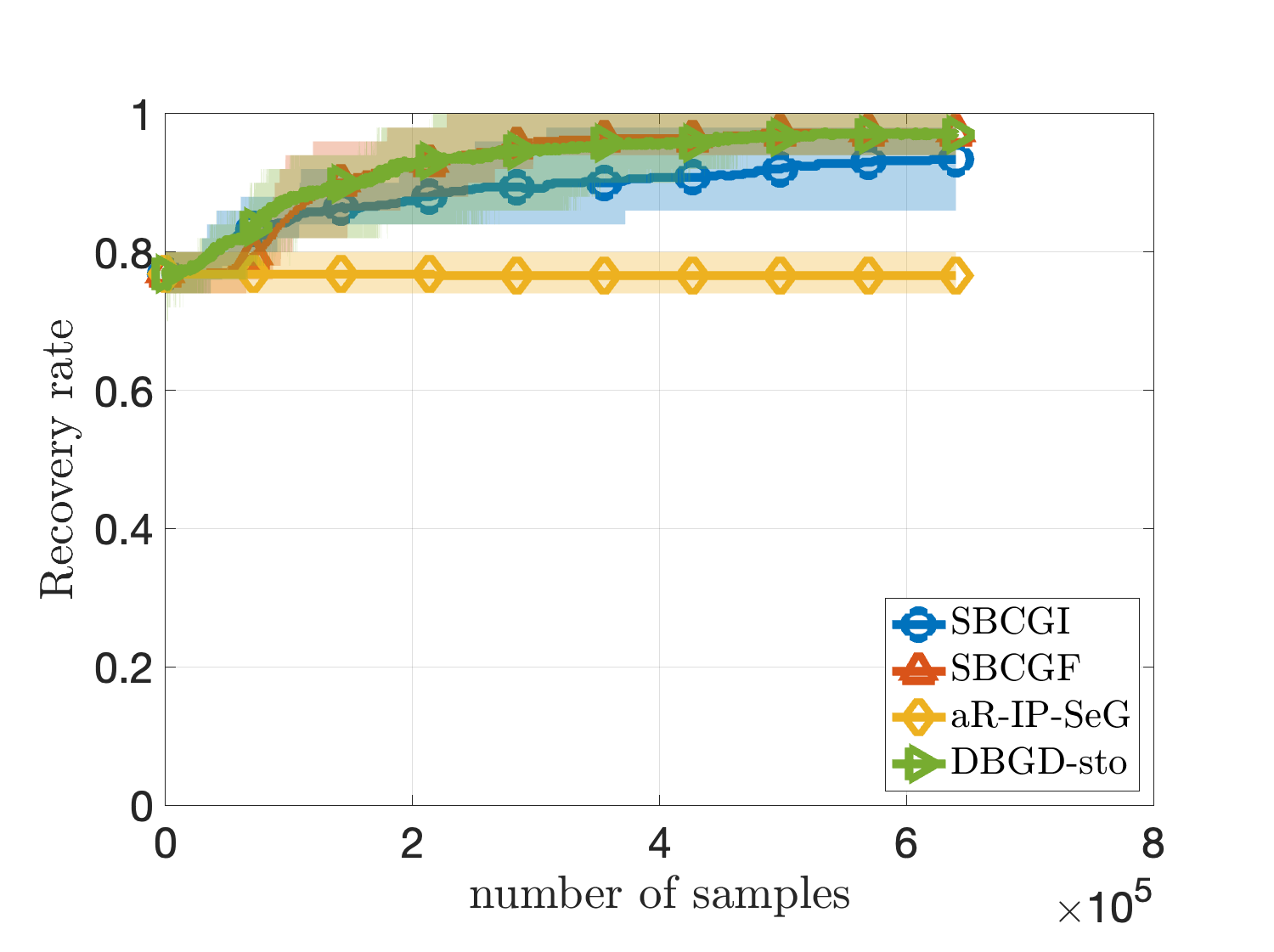}}
  \caption{
 Comparison of SBCGI, SBCGF, aR-IP-SeG, and DBGD-Sto for solving Problem~\eqref{eq:Dictionary_learning} with 10 different random seeds}
  \label{fig:dictionary_10}
  \vspace{-2mm}
\end{figure}

\subsection{Importance of the right cutting plane}
\begin{figure}[t!]
  \centering
  \subfloat[Lower-level gap]{\includegraphics[width=0.3\textwidth]{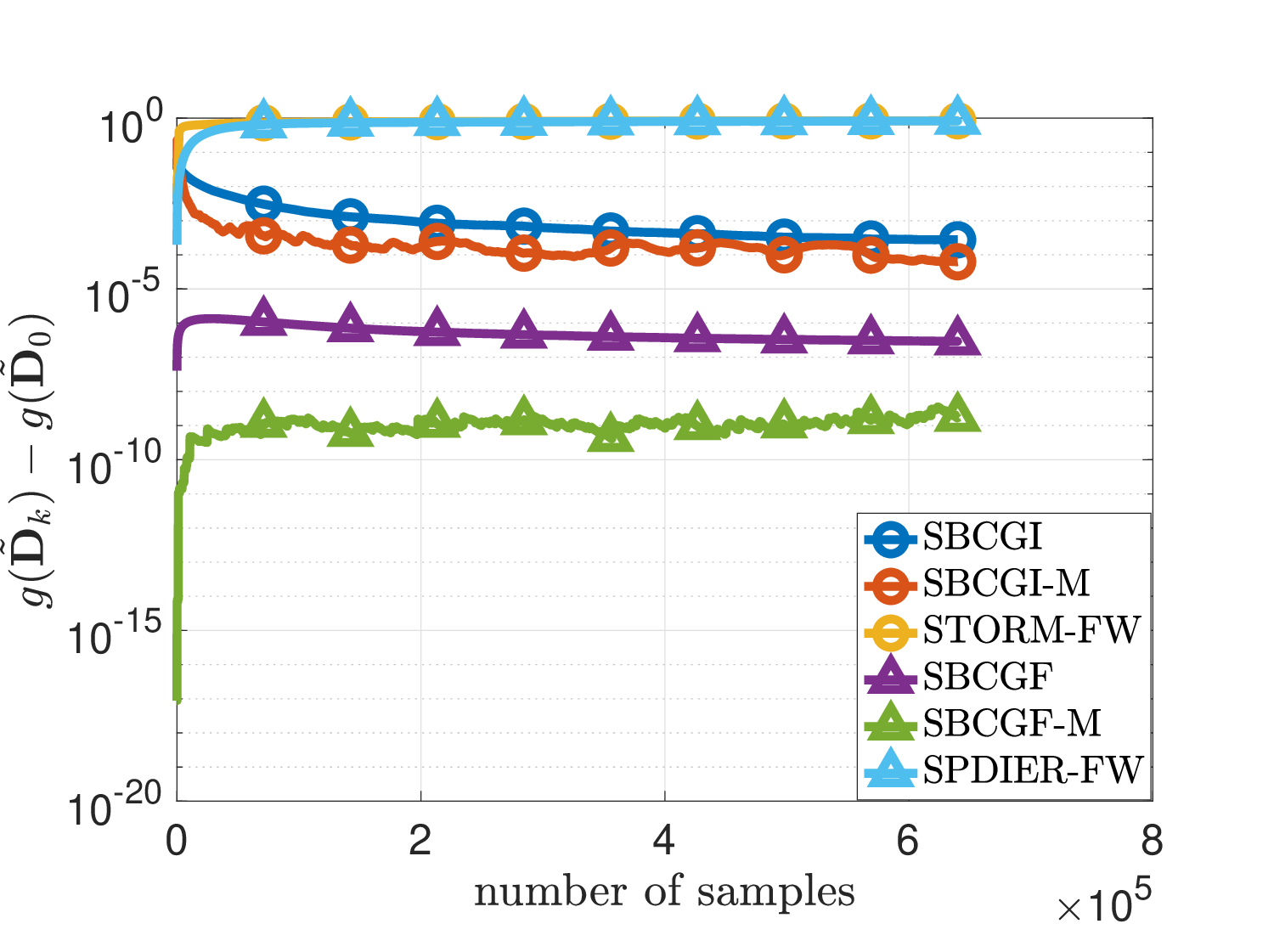}}
  \subfloat[Upper-level gap]{\includegraphics[width=0.3\textwidth]{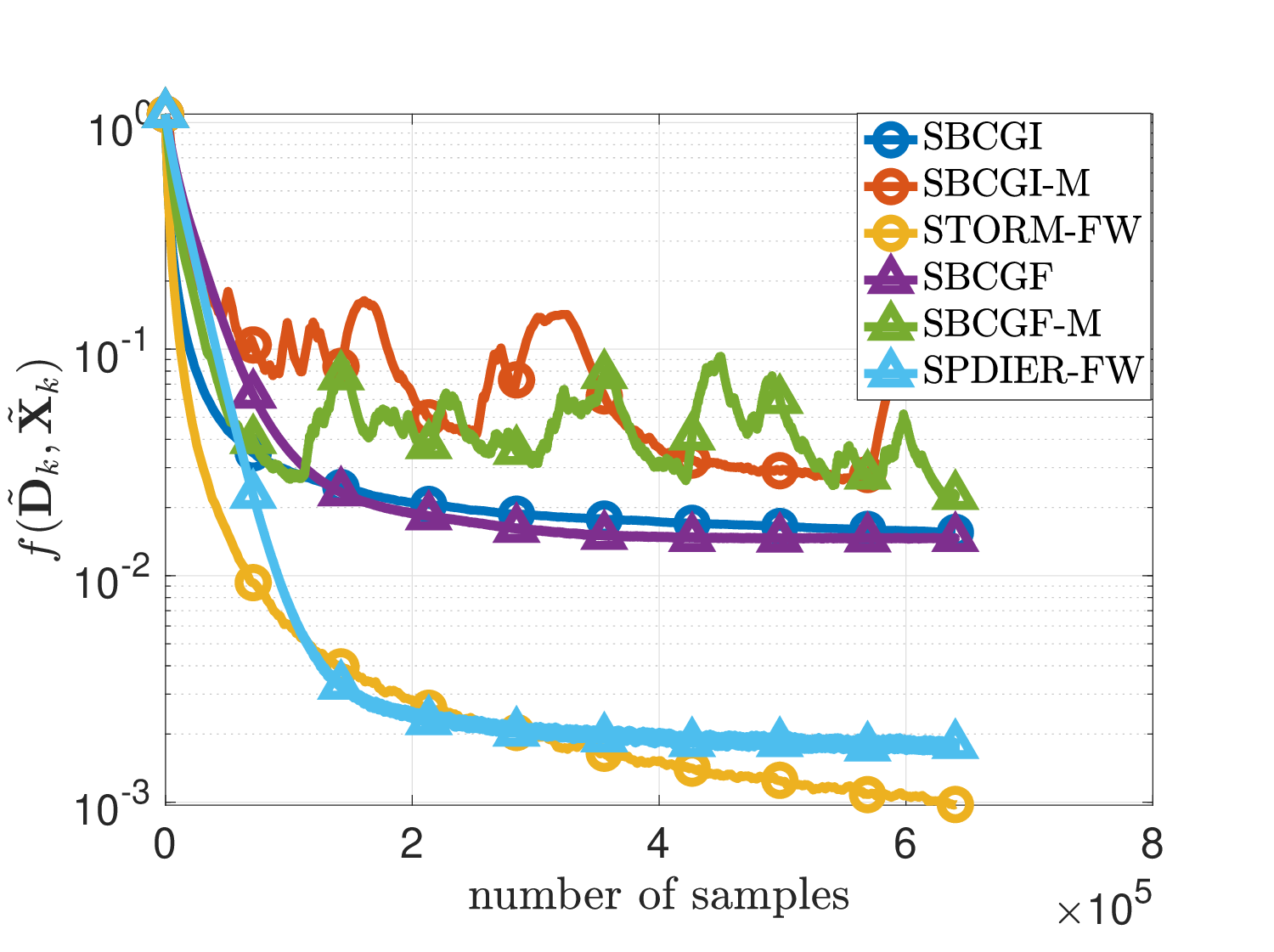}}
  \subfloat[Recovery rate]
{\includegraphics[width=0.3\textwidth]{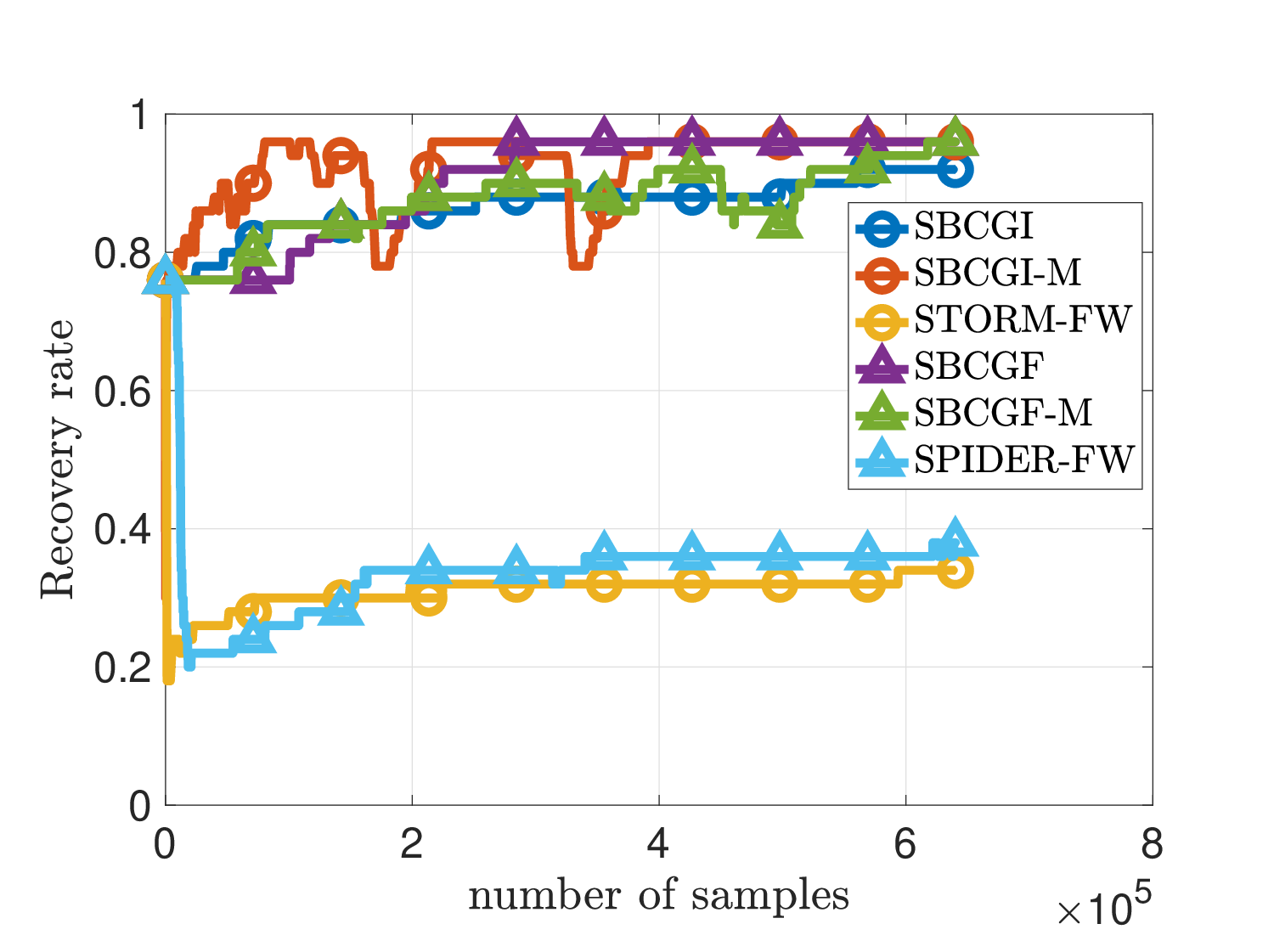}}
  \caption{
 Comparison of SBCGI, SBCGF, SBCGI-M, SBCGF-M, STORM-FW, and SPIDER-FW for solving Problem~\eqref{eq:Dictionary_learning}.}
  \label{fig:dictionary_M}
\end{figure}
In this section, we numerically illustrate the importance of choosing the right cutting plane on \textit{Example 2 (dictionary learning)}. Specifically, we compare our proposed methods with the ones without a cutting plane and with an unregularized cutting plane (without additional term $K_{t}$). \\
If we replace the stochastic cutting plane \eqref{eq:sto_cutting_plane} with the unregularized cutting plane \eqref{eq:wrongset} in SBCGI \ref{alg:STORM} and SBCGF \ref{alg:SPIDER}, then the algorithm usually fail at some point in the process, depending on the datasets and parameters chosen, based on our experimental observations. More specifically, algorithms' failure means that the subproblem of dictionary learning \eqref{eq:sub_dictionary} is infeasible. So we slightly modify it by adding a checkpoint before solving the subproblem. If the subproblem is infeasible at the current iteration, then we choose the update direction $\vs_{t} = \widehat{\nabla g}_{t}$. This adjustment prevents unnecessary interruptions during the process and enforce the algorithms to focus only on the lower-level problem when the subproblem is infeasible. We denote the modified algorithms SBCGI-M and SBCGF-M. Moreover, we also take SBCGI and SBCGF without cutting planes into consideration, denoted as STORM-FW and SPIDER-FW. In fact, in this case, the bilevel algorithms degenerate to single-level projection-free algorithms similar to algorithms in \cite{xie2020} and \cite{yurtsever2019}.\\
Figure~\ref{fig:dictionary_M} (a) indicates that SBCGI-M and SBCGF-M focus more on the lower-level problem due to the design of the algorithms and extremely unstable as we can see in Figure~\ref{fig:dictionary_M} (b)(c). While STORM-FW and SPIDER-FW only focus on the upper-level problem, which leads to terrible results on the lower-level gap and recovery rate.



\end{document}